\newtheorem{thm}{Theorem}[section]
\newtheorem{lemma}[thm]{Lemma}
\newtheorem{prop}[thm]{Proposition}
\newtheorem{cor}[thm]{Corollary}
\newtheorem{defn}[thm]{Definition}
\theoremstyle{remark}
\newtheorem{ex}[thm]{Example}
\newtheorem{rmk}[thm]{Remark}
\numberwithin{figure}{section}
\numberwithin{equation}{section}
\newcommand{\N}{\mathbb{N}}
\newcommand{\Z}{\mathbb{Z}}
\newcommand{\Q}{\mathbb{Q}}
\newcommand{\R}{\mathbb{R}}
\newcommand{\F}{\mathbb{F}}
\newcommand{\T}{\mathbb{T}}
\newcommand{\HF}{\widehat{HF}}
\newcommand{\CF}{\widehat{CF}}
\newcommand{\HFK}{\widehat{HFK}}
\newcommand{\CFK}{\widehat{CFK}}
\newcommand{\de}{\partial}
\newcommand{\ba}{\boldsymbol{\alpha}}
\newcommand{\bb}{\boldsymbol{\beta}}
\newcommand{\bc}{\boldsymbol{\gamma}}
\newcommand{\x}{\mathbf{x}}
\newcommand{\y}{\mathbf{y}}
\newcommand{\bfTheta}{\boldsymbol{\Theta}}
\renewcommand{\epsilon}{\varepsilon}
\newcommand{\spin}{{\rm Spin}^c}
\newcommand{\s}{\mathfrak{s}}
\renewcommand{\L}{\mathcal{L}}
\renewcommand{\H}{\mathcal{H}}
\newcommand{\D}{\mathcal{D}}
\newcommand{\Lhat}{\widehat{\mathcal{L}}}
\newcommand{\ltilde}{\widetilde}
\newcommand{\Sym}{{\rm Sym}}
\newcommand{\SFH}{\underrightarrow{SFH}}
\newcommand{\EH}{\underrightarrow{EH}}
\newcommand{\A}{\mathbf{A}}
\newcommand{\deff}{\textbf}
\title{Comparing invariants of Legendrian knots}
\author{Marco Golla}
\date{}
\begin{document}

\maketitle

\begin{abstract}
We prove the equivalence of the invariants $EH(L)$ and $\L^-(\pm L)$ for oriented Legendrian knots $L$ in the 3-sphere equipped with the standard contact structure, partially extending a previous result by Stipsicz and V\'ertesi. In the course of the proof we relate the sutured Floer homology groups associated with a knot complement $S^3\setminus K$ and the knot Floer homology of $(S^3,K)$ and define intermediate Legendrian invariants.
\end{abstract}

\section{Introduction}

In recent years, many Floer-theoretic invariants for Legendrian knots have been introduced: in 2008, Ozsv\'ath, Szab\'o and Thurston \cite{OST} used grid diagrams to define two invariants $\widehat{\lambda}_\pm(L)$ and $\lambda_{\pm}(L)$ of oriented Legendrian knots $L$ in $(S^3,\xi_{\rm st})$, taking values in a combinatorial version of knot Floer homology. Shortly afterwards, Lisca, Ozsv\'ath, Stipsicz and Szab\'o \cite{LOSS} used open books to construct two other invariants of oriented nullhomologous Legendrian knots $L$, called $\Lhat(L)$ and $\L^-(L)$, taking values in the original version knot Floer homology.

In 2006, Juh\'asz defined a version of Heegaard Floer homology for manifolds with ``marked'' boundary, which he called sutured Floer homology \cite{Ju}. Honda, Kazez and Mati\'c soon constructed invariants for contact manifolds with convex boundary, taking values in a sutured Floer cohomology group \cite{HKM1}: the key feature of their invariant (and of sutured Floer homology) is its behaviour with respects to gluing manifolds along their (compatible) boundaries \cite{HKM2}.

In this context, to every Legendrian knot $L$ in a contact three-manifold one can associate a contact manifold with convex boundary, and therefore a contact invariant $EH(L)$ living in some sutured Floer homology group. Some natural questions arise at this point: is there any relation between $EH(L)$ and the $\L$ invariants? If so, what is this relation exactly?

Late in 2008, a first answer to these questions was given by Stipsicz and V\'ertesi, who explained how $EH(L)$ determines $\Lhat(L)$ \cite{SV}; recently, Baldwin, Vela--Vick and V\'ertesi were able to prove the equivalence of the combinatorial invariants $\lambda$ and the LOSS invariants $\L$ \cite{BVV}.

Our main result is the following ($-L$ means $L$ with the reversed orientation).

\begin{thm}\label{mainthm}
For two oriented, topologically isotopic Legendrian knots $L_0,L_1$ in $(S^3,\xi_{\rm st})$, the following are equivalent:
\begin{itemize}
\item[(i)] $EH(L_0) = EH(L_1)$;
\item[(ii)] $\L^-(L_0) = \L^-(L_1)$ and $\L^-(-L_0) = \L^-(-L_1)$.
\end{itemize}
\end{thm}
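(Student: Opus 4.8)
The plan is to factor the comparison through sutured Floer homology of the knot complement, extending the Stipsicz--V\'ertesi strategy \cite{SV} from the hat theory to the minus theory. Fix the oriented topological type $K$ of the two knots. A Legendrian $L$ of type $K$ with $tb(L)=t$ has a convex complement: a sutured contact manifold supported on $S^3\setminus\nu(K)$ whose dividing set $\Gamma_t$ is a pair of curves of slope $t$ on $\de\nu(K)$, and $EH(L)\in SFH(-(S^3\setminus\nu(K)),-\Gamma_t)$; since the orientation of a Legendrian knot does not affect its convex complement, $EH(-L)=EH(L)$. As $t\to-\infty$ the slope $t$ approaches the meridian, and negative Legendrian stabilization $L\rightsquigarrow S_-L$ drops $t$ by one; by naturality of the Honda--Kazez--Mati\'c class under bypass attachment along $\de\nu(K)$ there is an induced map $SFH(-(S^3\setminus\nu(K)),-\Gamma_t)\to SFH(-(S^3\setminus\nu(K)),-\Gamma_{t-1})$ carrying $EH(L)$ to $EH(S_-L)$. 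Let $\SFH(S^3\setminus\nu(K))$ be the direct limit over these maps and $\EH(L):=[EH(S_-^jL)]$ its distinguished class.

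There are then two things to prove. First, $\SFH(S^3\setminus\nu(K))$, together with its $\Z\oplus\Z$-grading and the $\F[U]$-module structure carried by the bypass maps, is isomorphic to $HFK^-(-S^3,K)$; the finite stages $SFH(-(S^3\setminus\nu(K)),-\Gamma_t)$ are the ``intermediate'' sutured invariants of the abstract, each carrying an intermediate Legendrian class interpolating between $EH$ and $\L^-$. This should fall out of the bypass exact triangles relating consecutive slopes, together with a surgery-formula argument identifying the finite stages with truncations of $HFK^-$. Second, under this identification one has $\EH(L)=\L^-(L)$, up to the conventional orientation reversal in the definition of $\L^-$. For this I would take an open book for $(S^3,\xi_{\rm st})$ adapted to $L$ with $L$ lying on a page, so that $\L^-(L)$ is by definition the class of an explicit intersection point in a subordinate Heegaard diagram; removing a neighborhood of $L$ turns this into a partial-open-book sutured diagram for the convex complement on which the generator computing $EH(L)$ is the same point, and passing to the limit makes it $\L^-(L)$. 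Running the construction with the two basepoints of the LOSS diagram exchanged --- equivalently, starting from $-L$ --- produces $\L^-(-L)$ in place of $\L^-(L)$.

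Granting these, the theorem follows. If $EH(L_0)=EH(L_1)$ (which already forces $tb(L_0)=tb(L_1)$, so that both classes live in the same group), then functoriality of the limit gives $\EH(L_0)=\EH(L_1)$, hence $\L^-(L_0)=\L^-(L_1)$; and because $EH(-L_i)=EH(L_i)$, the same applied to $-L_0,-L_1$ gives $\L^-(-L_0)=\L^-(-L_1)$, which is (ii). Conversely (ii) gives $\EH(L_0)=\EH(L_1)$, since the identification above is an isomorphism under which $\EH\mapsto\L^-$; and $\EH(L_0)=\EH(L_1)$ descends to $EH(L_0)=EH(L_1)$ because in the (fixed) Alexander grading where the classes $EH(L_0),EH(L_1)$ sit the structure map into the limit is injective, by the description of the bypass maps from the first step together with the Thurston--Bennequin bound on that grading.

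The main difficulty is the first of the two steps: identifying the limit of the intermediate sutured groups with $HFK^-(-S^3,K)$ precisely as a graded $\F[U]$-module, and matching the grading and orientation conventions with those used to define $\L^-$ and $EH$. The second step is then essentially a careful tracking of distinguished generators through the Stipsicz--V\'ertesi correspondence, but the bookkeeping that recovers both $\L^-(L)$ and $\L^-(-L)$ from the single orientation-insensitive class $\EH(L)$, and the injectivity statement needed for the converse implication, are where I expect the argument to be most delicate.
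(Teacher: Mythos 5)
Your overall architecture is the same as the paper's: form the direct limit of the groups $SFH(-S^3_{K,t})$ under the negative bypass/stabilisation maps, give it the $\F[U]$-action via positive stabilisation, identify the limit with $HFK^-(-S^3,K)$ (with the finite stages as intermediate invariants), and show the limit class of $EH(L)$ maps to $\L^-(L)$. The two steps you "grant" are indeed the hard content (the paper establishes the first via an explicit basis coming from the large-surgery description of $\HFK(-S^3_m(K),\ltilde{K})$ and a computation of $\sigma_\pm$ on it, and the second via explicit triangle counts for bypass attachments, a handleslide, and a winding-region argument), so deferring them is fine as a plan. The problem is in how you deduce the theorem from them.

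The converse direction has a genuine gap. You claim that $\EH(L_0)=\EH(L_1)$ descends to $EH(L_0)=EH(L_1)$ because the structure map $\iota_t\colon SFH(-S^3_{K,t})\to\SFH(-S^3,K)$ is injective on the Alexander-homogeneous piece where the $EH$ classes sit. This is false in general: the kernel of $\iota_t$ is exactly the subspace $S_+$ eventually killed by $\sigma_-$, and $S_+$ can be nonzero in the grading $-r(L)/2$ (the Bennequin--Plamenevskaya bound only places that grading strictly below the top of the unstable strip; it does not clear it of $S_+$ generators $d_{i,j}$, which occupy a whole range of gradings when the differential arrows of $\CFK(S^3,K)$ have length greater than one). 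Note also that your argument uses only $\L^-(L_0)=\L^-(L_1)$ and discards the hypothesis $\L^-(-L_0)=\L^-(-L_1)$ entirely, so if it worked it would prove that $\L^-(L)$ alone determines $EH(L)$ --- which is precisely what the two-sided form of the statement is designed to avoid claiming. The correct deduction needs both halves of (ii): $\L^-(L_0)=\L^-(L_1)$ identifies the components of $EH(L_0)$ and $EH(L_1)$ along $S_-$ and the unstable part (since the $\sigma_-$-limit kills only $S_+$), while $\L^-(-L_0)=\L^-(-L_1)$, i.e. equality of the classes in the limit built from $\sigma_+$ (whose structure maps kill $S_-$ instead), identifies the $S_+$ components; together with the observation that in $(S^3,\xi_{\rm st})$ one always has $tb\le 2\tau(K)-1$, so the unstable part behaves as in the tight range, this yields $EH(L_0)=EH(L_1)$. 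Your forward direction, using $EH(-L_i)=EH(L_i)$ and functoriality of the limit for both orientations, is correct and matches the paper.
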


The same result has been obtained, in greater generality, by Etnyre, Vela--Vick and Zarev \cite{EVZ}. In fact, using the same techniques together with a generalisation of \cite[Theorem 11.35]{LOT}, one can prove the generalisation of Theorem \ref{mainthm} to Legendrian knots in arbitrary contact 3-manifolds $(Y,\xi)$ such that $c(Y,\xi)\neq 0$, and it is always the case that $EH(L)$ determines $\L^-(\pm L)$.

\vskip 0,2 cm

{\bf Organisation}. This paper is organised as follows: we first review the setting we're working in, giving a brief introduction to sutured Floer homology in Section \ref{SFH+gluing} and the $EH$ invariants in Section \ref{HFKstab}. Then we analyse in some detail the groups and the maps we are dealing with, in Section \ref{newLeg}. In Section \ref{last} the relation between various sutured Floer homology associated to a knot complement and $HFK^-$ are explained; this will lead to the proof of the equivalence of the two invariants $EH$ and $\L^-$ in the last section.

\vskip 0,2 cm

{\bf Acknowledgments}. I'm very grateful to my supervisor, Jake Rasmussen, for suggesting me the problem, for many helpful discussions, and for his support. I want to thank Paolo Lisca, Olga Plamenevskaya, Andr\'as Stipsicz and David Shea Vela-Vick for interesting conversations, and the referees for helpful comments and suggestions. Part of this work has been done while I was visiting the Simons Center for Geometry and Physics: I acknowledge their support. The author has been supported by the ERC grant LTDBUD.

\section{Sutured Floer homology and gluing maps}\label{SFH+gluing}

\subsection{Sutured manifolds}\label{sutured}

The definition of balanced sutured manifold is due to Juh\'asz \cite{Ju}.

\begin{defn}
A \textbf{balanced sutured manifold}, is a pair $(M,\Gamma)$ where $M$ is an oriented 3-manifold with nonempty boundary $\de M$, and $\Gamma$ is a family of oriented curves in $\de M$ that satifies:
\begin{itemize}\itemsep-1.5pt
 \item $\Gamma$ intersects each component of $\de M$;

 \item $\Gamma$ disconnects $\de M$ into $R_+$ and $R_-$, with $\pm\Gamma = \de R_\pm$ (as oriented manifolds);

 \item $\chi (R_+) = \chi (R_-)$.
\end{itemize}
\end{defn}

\begin{rmk}
The condition $\chi(R_+) = \chi(R_-)$ is called the \emph{balancing} condition. Since this is the only kind of sutured manifolds we're dealing with, we prefer to just drop the adjective `balanced'.
\end{rmk}

\begin{ex}\label{cont_sut_ex1}
Any $M$ oriented 3-manifold with $S^2$-boundary, can be turned into a sutured manifold $(M,\{\gamma\})$ by choosing any simple closed curve $\gamma$ in $\de M$. We'll often write $M=Y(1)$, where $Y = M\cup_\de D^3$ is the ``simplest'' closed 3-manifold containing $M$.

For every integer $f$, we have a sutured manifold $S^3_{K,f}$ given by pairs $(S^3\setminus N(K), \{\gamma_f,-\gamma_f\})$, where $\gamma_f$ is an oriented curve on the boundary torus $\de N(K)$ of an open small neighbourhood $N(K)$ of $K$. The slope of $\gamma_f$ is $\lambda_S + f\cdot \mu$, and $-\gamma_f$ is a parallel push-off of $\gamma_f$, with the opposite orientation. 
Here $\lambda_S$ denotes the Seifert longitude of $K$. We'll use the shorthand $\Gamma_f$ for $\{\gamma_f,-\gamma_f\}$.
\end{ex}

\begin{ex}\label{cont_sut_ex2}
To any Legendrian knot $L\subset (Y,\xi)$ in an arbitrary 3-manifold $Y$ one can associate in a natural way a sutured manifold, that we'll denote with $Y_L$, constructed as follows: there's a standard open Legendrian neighbourhood $\nu(L)$ for $L$, whose complement has convex boundary. The dividing set $\Gamma_L$ on the boundary consists of two parallel oppositely oriented curves parallel to the contact framing of $L$. The manifold $Y_L$ is then defined as the pair $(Y\setminus \nu(L), \Gamma_L)$. In the case we're mainly interested in, where $Y=S^3$ and $L$ is of topological type $K$, we have $S^3_L = S^3_{K, tb(L)}$. More generally, the same identification $\{{\rm framings}\}\leftrightarrow \Z$ can be made canonical whenever $K$ is nullhomologous in $Y$ and $H_2(Y)=0$, and we then have $Y_L = Y_{K,tb(L)}$.

We'll often use $Y_L$ also to denote the contact manifold with convex boundary $(Y\setminus \nu(L),\xi|_{Y\setminus \nu(L)})$, without creating any confusion.

\end{ex}

There's a decomposition/classification theorem for sutured manifolds, completely analogous to the Heegaard decomposition/Reidemeister-Singer theorem for closed three-manifolds. Consider a compact surface $\Sigma$ with boundary together with two collections of pairwise disjoint simple closed curves $\ba,\bb\subset \Sigma$, such that each collection is a linearly independent set in $H_1(\Sigma; \Z)$; suppose moreover that $|\ba|=|\bb|$. We can build a balanced sutured manifold out of this data as follows: take $\Sigma\times[0,1]$, glue a 2-handle on $\Sigma\times\{0\}$ for each $\alpha$-curve, and a 2-handle on $\Sigma\times\{1\}$ for each $\beta$-curve, and let $M$ be the manifold obtained after smoothing corners; declare $\Gamma = \de\Sigma\times\{1/2\}$. The pair $(M,\Gamma)$ is a balanced sutured manifold, and $(\Sigma,\ba,\bb)$ is called a \deff{(sutured) Heegaard diagram} of $(M,\Gamma)$.

\begin{thm}[\cite{Ju}]
Every balanced sutured manifold admits a Heegaard diagram, and every two such diagrams become diffeomorphic after a finite number of isotopies of the curves, handleslides and stabilisations taking place in the interior of the Heegaard surface.
\end{thm}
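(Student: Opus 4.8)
The plan is to carry over to the sutured setting the classical dictionary between Heegaard splittings and Morse functions, and to deduce the uniqueness part from Cerf theory, exactly as in the closed case treated by Ozsv\'ath--Szab\'o (and, originally, Reidemeister--Singer).

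For existence I would start from the observation that, after rounding the corners of $\de M$ along $\Gamma$, the sutured manifold $(M,\Gamma)$ becomes a cobordism from $R_-$ to $R_+$ whose ``vertical'' boundary is a union of annuli $\Gamma\times[0,3]$. Choose a Morse function $f\colon M\to[0,3]$ with $f^{-1}(0)=R_-$, $f^{-1}(3)=R_+$, no critical points on $\de M$, and equal to the second-coordinate projection on the vertical annuli, so that $\Gamma=f^{-1}(3/2)\cap\de M$; such functions exist by Morse theory for manifolds with boundary and corners. Make $f$ self-indexing. Assuming, as is customary, that $M$ has no closed components, every component $M'$ of $M$ meets $R_-$: a component of $\de M'$ meets $\Gamma$ by the first axiom, and any curve of $\Gamma$ lies on $\de R_-$. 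Hence every index-$0$ critical point can be cancelled against an index-$1$ one, and dually every index-$3$ point can be removed. Now set $\Sigma=f^{-1}(3/2)$, a compact surface with $\de\Sigma=\Gamma$, take $\ba$ to be the belt circles of the remaining index-$1$ handles (equivalently $W^u(p)\cap\Sigma$ for the index-$1$ critical points $p$), and $\bb$ the attaching circles of the index-$2$ handles. After trading a few handles if necessary one may assume each family is linearly independent in $H_1(\Sigma;\Z)$, and the balancing condition $\chi(R_+)=\chi(R_-)$, via $\chi(R_-)=\chi(\Sigma)+2|\ba|$ and $\chi(R_+)=\chi(\Sigma)+2|\bb|$, forces $|\ba|=|\bb|$. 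This is the required diagram.

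For uniqueness, realise the two given diagrams by two such Morse functions $f_0,f_1$ on the same $M$ (the construction of a sutured manifold from a Heegaard diagram produces precisely a function of this type), both equal to the fixed model near $\de M$, and join them by a generic path $\{f_t\}_{t\in[0,1]}$ in the space of smooth functions $M\to[0,3]$ that agree with that model near $\de M$. By Cerf's theorem the path can be taken to be Morse for all but finitely many $t$, where it undergoes either a birth/death of a cancelling pair of critical points or a degeneration of a single flow line between two critical points, and between these times the critical points merely move and occasionally exchange heights. The translation is the standard one: an exchange of heights of two index-$2$ (resp.\ index-$1$) critical points joined by a flow line is a handleslide among the $\bb$-curves (resp.\ $\ba$-curves), and none when there is no flow line; a deformation of $f_t$ with fixed critical points is an ambient isotopy of $\Sigma_t$ dragging the curves along; and a birth (resp.\ death) of a cancelling index-$1$/index-$2$ pair is a stabilisation (resp.\ destabilisation), which raises the genus of $\Sigma$ by one and inserts a transverse pair of new $\alpha$- and $\beta$-curves. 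Since $f_t$ is the fixed model near $\de M$ throughout, all births and handleslides can be arranged in the interior of $\Sigma$; composing the resulting finite word of moves with the final diffeomorphism of $\Sigma$ yields the claim.

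The step I expect to be the main obstacle is keeping $\{f_t\}$ — or repairing it — inside the admissible class with no index-$0$ and no index-$3$ critical points. A generic path will in general create births of cancelling index-$0$/index-$1$ and index-$2$/index-$3$ pairs, and the self-indexing rearrangements needed afterwards may push critical values across the level $3/2$. One must show that any such spurious minimum or maximum can be cancelled again at the cost of only further isotopies and handleslides, so that the net effect on the diagram is still a word in the three permitted moves; this is exactly the delicate part of the Cerf--Reidemeister--Singer argument in the closed case, and here one must in addition check that all of this surgery on $f_t$ can be carried out away from $\de M$, which is precisely where the hypotheses that $\Gamma$ meets every boundary component and that $f_t$ is collared by the standard model on $\Gamma\times[0,3]$ enter. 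The remaining ingredients — genericity of paths, the normal forms of Cerf singularities, and the local handle-move/diagram-move correspondence — are routine.
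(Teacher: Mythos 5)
This theorem is quoted from Juh\'asz \cite{Ju}; the paper itself gives no proof, so there is no internal argument to compare yours with. Your sketch follows the same route as the cited source: relative Morse functions adapted to $(M,\Gamma)$ for existence, and a relative version of Cerf theory for uniqueness, exactly as in the closed case of \cite{OSHF}. The existence half is essentially complete; the only repair I would make is that the linear independence of $\ba$ (and of $\bb$) in $H_1(\Sigma;\Z)$ is not arranged by ``trading handles'': once the index-$0$ and index-$3$ critical points are cancelled, surgering $\Sigma$ along $\ba$ yields $R_-$ on the nose, and since every component of $\de M$ meets $\Gamma$, the surface $R_-$ has no closed components, which is what forces the independence.

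The uniqueness half, as you yourself point out, is only an outline. The step you defer --- showing that the index-$0/1$ and index-$2/3$ births that a generic path $\{f_t\}$ unavoidably produces, together with the rearrangements needed to restore self-indexing, can be undone rel $\de M$ at the cost of isotopies, handleslides and stabilisations only --- is not a routine check: it is the actual content of the uniqueness statement, both here and in the closed Reidemeister--Singer argument. As written, your argument establishes the dictionary between admissible Morse functions and diagrams and the translation of generic Cerf moves into the three diagram moves, but it assumes precisely what the relative Cerf analysis has to deliver, namely that one can return to the admissible class (no index-$0$ or index-$3$ critical points, standard near $\de M$) without leaving the list of allowed moves. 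So as a self-contained proof it is incomplete; the missing analysis is what \cite{Ju} supplies, following the closed case.
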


\subsection{The Floer homology packages}

This is meant to be just a recollection of facts about the Floer homology theories we'll be working with. The standard references for the material in this subsection are \cite{OSHF, OSPA,Lip} for the Heegaard Floer part, and \cite{Ju} for the sutured Floer part.

In order to avoid sign issues, we'll work with $\F=\F_2$ coefficients.

Consider a pointed Heegaard diagram $\H=(\Sigma_g,\ba,\bb,z)$ representing a three-manifold $Y$, and form two Heegaard Floer complexes $\widehat{CF}(Y)$ and $CF^-(Y)$: the underlying modules are freely generated over $\F$ and $\F[U]$ by $g$-tuples of intersection points in $\bigcup_{i,j}(\alpha_i\cap\beta_j)$, so that there's exactly one point on each curve in $\ba\cup\bb$.

The differentials $\widehat{\de}, \de^-$ are harder to define, and count certain pseudo-holomorphic discs in a symmetric product $\Sym^g(\Sigma_g)$, or maps from Riemann surfaces with boundary in $\Sigma_g\times\R\times [0,1]$, with the appropriate boundary conditions. The homology groups $\HF(Y)=H_*(\CF(Y),\widehat{\de})$ and $HF^-(Y)=H_*(CF^-(Y),\de^-)$ so defined are called \deff{Heegaard Floer homologies} of $Y$, and are independent of the (many) choices made along the way \cite{OSHF}.

Sutured Floer homology is a variant of this construction for sutured manifolds $(M,\Gamma)$. The starting point is a sutured Heegaard diagram $\H=(\Sigma,\ba,\bb)$ for $(M,\Gamma)$. We form a complex $SFC(M,\Gamma)$ in the same way, generated over $\F$ by $d$-tuples of intersection points as above, where $d=|\ba|=|\bb|$. The differential $\de$ is defined by counting pseudo-holomorphic discs in $\Sym^d(\Sigma)$ or maps from Riemann surfaces to $\Sigma\times\R\times[0,1]$, again with the appropriate boundary conditions.

The homology $SFH(M,\Gamma)=H_*(SFC(M,\Gamma),\de)$ is called the \deff{sutured Floer homology} of $(M,\Gamma)$, and is shown to be independent of all the choices made \cite{Ju}. It naturally corresponds to a `hat' theory.

\begin{prop}[\cite{Ju}]\label{HFvsSFH}
For a closed 3-manifold $Y$, $\HF (Y) = SFH(Y(1))$.

For a knot $K$ in a closed 3-manifold $Y$, $\widehat{HFK}(Y,K) = SFH(Y_{K,m})$, where $m$ is the meridian for $K$ in $Y$.
\end{prop}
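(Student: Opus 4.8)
The plan is to realise both sides of each identity as the sutured Floer homology of the same sutured manifold, so that the statement reduces to comparing Heegaard diagrams. For the first assertion, the sutured manifold $Y(1)$ is by construction $(\Sigma\times[0,1])$ with $2$-handles attached along $\ba$ and $\bb$ in $\Sigma\times\{0\}$ and $\Sigma\times\{1\}$, using a surface $\Sigma$ with a single boundary circle; capping that boundary circle with a disc turns $\Sigma$ into a closed surface $\widehat\Sigma$ of genus $g$ and produces exactly the closed manifold $Y$ represented by the Heegaard diagram $(\widehat\Sigma,\ba,\bb)$, with the basepoint $z$ sitting in the capping region $\widehat\Sigma\setminus\Sigma$. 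First I would check, using Juh\'asz's surface decomposition or a direct isotopy argument, that every sutured Heegaard diagram for $Y(1)$ arises this way; this is essentially the content of Example~\ref{cont_sut_ex1}.

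Granting that, the generators of $SFC(Y(1))$ are exactly the $g$-tuples of intersection points used to define $\widehat{CF}(Y)$, since a sutured Heegaard diagram has $|\ba|=|\bb|$ and the count is over the same intersection points. The key step is to see that the two differentials agree: the holomorphic discs (or Riemann surface maps into $\Sigma\times\R\times[0,1]$) counted by the sutured differential are precisely those avoiding $\de\Sigma$, while the hat differential counts discs avoiding the basepoint $z$; since $z$ lies in the capping disc and $\de\Sigma$ bounds a neighbourhood of $z$ in $\widehat\Sigma$, a domain avoids $z$ if and only if it has no boundary on $\de\Sigma$, so the moduli spaces and their signed (here $\F_2$) counts coincide. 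Hence $\widehat{CF}(Y)\cong SFC(Y(1))$ as chain complexes, and taking homology gives $\widehat{HF}(Y)=SFH(Y(1))$.

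For the second assertion I would argue entirely in parallel. The knot Floer complex $\widehat{CFK}(Y,K)$ is computed from a doubly-pointed Heegaard diagram $(\widehat\Sigma,\ba,\bb,w,z)$ for $(Y,K)$; removing a neighbourhood of an arc joining $w$ to $z$ cuts $\widehat\Sigma$ into a surface $\Sigma$ with one boundary component, and one checks that $(\Sigma,\ba,\bb)$ is a sutured Heegaard diagram for the sutured knot complement $Y_{K,m}$ with meridional sutures $\Gamma_m$. Again the generators match, and a domain in $\widehat\Sigma$ avoiding both $w$ and $z$ is exactly a domain in $\Sigma$ with no boundary on $\de\Sigma$, so the differentials agree and $\widehat{HFK}(Y,K)=SFH(Y_{K,m})$.

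The main obstacle is the diagrammatic bookkeeping in both cases: one must verify carefully that the natural ``cap off'' and ``cut along an arc'' operations set up a bijection between sutured Heegaard diagrams for $Y(1)$ (resp.\ $Y_{K,m}$) and pointed (resp.\ doubly-pointed) Heegaard diagrams for $Y$ (resp.\ $(Y,K)$) compatible with the curve data, and that stabilisations and handleslides on one side correspond to moves on the other, so that the isomorphism is canonical rather than diagram-dependent. The holomorphic-curve comparison, once the domains are identified, is then formal since we work over $\F_2$ and no orientation or $\mathrm{Spin}^c$ subtleties intervene at the level of the hat theories.
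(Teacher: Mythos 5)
The first half of your argument is fine, and it is essentially the proof the paper is implicitly invoking (the proposition is quoted from Juh\'asz, who proves it exactly this way): delete a disc about $z$ from a pointed diagram for $Y$ to get a sutured diagram for $Y(1)$, observe the generators agree, and note that a domain misses $z$ if and only if it is disjoint from $\de\Sigma$, so the two complexes coincide. (Your preliminary claim that \emph{every} sutured diagram for $Y(1)$ arises by capping is not needed: a single diagram suffices, since $SFH$ is diagram-independent by Juh\'asz's invariance theorem, and the isomorphism asserted is only one of groups.)

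The knot case, however, is set up incorrectly. Cutting $\widehat{\Sigma}$ along a neighbourhood of an arc joining $w$ to $z$ produces a surface with \emph{one} boundary circle, hence a sutured manifold with a single suture; but $Y_{K,m}$ has torus boundary with the two sutures $\Gamma_m$, and a single curve on a torus cannot separate it into $R_+$ and $R_-$, so no sutured diagram for $Y_{K,m}$ can have connected $\de\Sigma$. Moreover, an embedded arc from $w$ to $z$ must in general cross the $\alpha$- and $\beta$-curves (if $z$ and $w$ lay in the same component of $\widehat{\Sigma}\setminus(\ba\cup\bb)$, the knot determined by the two basepoints would be contained in a ball), so after cutting, $(\Sigma,\ba,\bb)$ is not a sutured diagram at all; and even when the arc can be chosen disjoint from the curves, $N({\rm arc})\times[0,1]$ is a ball containing both vertical strands of $K$, so the manifold you build is again $Y(1)$ rather than the knot complement. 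The correct construction (Juh\'asz's) is to delete two disjoint discs, one about $z$ and one about $w$: then $\Sigma$ has two boundary circles, the two deleted vertical strands form a regular neighbourhood of $K$, the boundary circles become the two meridional sutures of $\Gamma_m$, and a domain in $\widehat{\Sigma}$ missing both basepoints is precisely a domain in $\Sigma$ disjoint from $\de\Sigma$, so the complexes computing $\HFK(Y,K)$ and $SFH(Y_{K,m})$ agree and the second identity follows as in the first case.
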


\subsection{Floer-theoretic contact invariants}

The first contact invariant to be defined in Heegaard Floer homology was Ozsv\'ath and Szab\'o's $c$ \cite{OScontact}. We sketch here the construction of the contact class $EH$ \cite{HKM1}, and we will relate it to $c$ below.

\begin{defn}
A \deff{partial open book} is a triple $(S,P,h)$ where $S$ is a compact open surface, $P$ is a proper subsurface of $S$ which is a union of 1-handles attached to $S\setminus P$ and $h:P\to S$ is an embedding that pointwise fixes a neighborhood of $\de P \cap \de S$.
\end{defn}

We can build a contact manifold with convex boundary out of these data in a fashion similar to the usual open books: instead of considering a mapping torus, though, we glue two asymmetric halves, quotienting the disjoint union $S\times[0,1/2]\coprod P\times [1/2,1]$ by the relations $(x,t)\sim (x,t')$ for $x\in \de S$, $(y,1/2)\sim (y,1/2)$, $(h(y),1/2)\sim (y,1)$ for $y\in P$. The contact structure is uniquely determined if we require -- as we do -- tightness and prescribed sutures on each half $S\times[0,1/2]/\mathord{\sim}$ and $P\times[1/2,1]/\mathord{\sim}$ (see \cite{Ho} for details). Moreover, to any contact manifold with convex boundary we can associate a partial open book, unique up to Giroux stabilisations.

We can build a balanced diagram out of a partial open book. The Heegaard surface $\Sigma$ is obtained by gluing $P$ to $-S$ along the common boundary.

\begin{defn}\label{basis}
A \deff{basis} for $(S,P)$ is a set $\mathbf{a} = \{a_1,\dots,a_k\}$ of arcs properly embedded in $(P,\de P \cap \de S)$ whose homology classes generate $H_1(P,\de P \cap \de S)$.
\end{defn}

Given a basis as above, we produce a set $\mathbf{b} = \{b_1,\dots,b_k\}$ of curves using a Hamiltonian vector field on $P$: we require that under this perturbation the endpoints of $a_i$ move in the direction of $\de P$, and that each $a_i$ intersects $b_i$ in a single point $x_i$, and is disjoint from all the other $b_j$'s.

Finally define the two sets of attaching curves: $\ba = \{\alpha_i\}$ and $\bb = \{\beta_i\}$, where $\alpha_i = a_i \cup -a_i$ and $\beta_i = h(b_i) \cup -b_i$: the sutured manifold associated to $(\Sigma,\ba,\bb)$ is $(M,\Gamma)$. We call $\x(S,P,h)$ the generator $\{x_1,\dots,x_k\}$ in $SFC(\Sigma,\bb,\ba)$ supported inside $P$.

\begin{thm}[\cite{HKM1}]
The chain $\x(S,P,h)\in SFC(\Sigma,\bb,\ba)$ is a cycle, and its class in $SFH(-M,-\Gamma)$ is an invariant of the contact manifold $(M,\xi)$ defined by the partial open book $(S,P,h)$.
\end{thm}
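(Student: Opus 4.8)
The plan is to verify the cycle condition and the invariance claim following the strategy of Honda--Kazez--Mati\'c, adapted to the partial open book setting. First I would show that $\x=\x(S,P,h)=\{x_1,\dots,x_k\}$ is a cycle in $SFC(\Sigma,\bb,\ba)$. The key observation is that each $x_i$ is the unique intersection point of $a_i$ with $b_i$, all the generators $x_i$ lie inside the subsurface $P$, and by the definition of the $b_i$'s (obtained by a small Hamiltonian perturbation pushing endpoints toward $\de P$) there is no intersection point of $a_i$ with $b_j$ for $j\neq i$. I would then argue that any holomorphic disc contributing to $\de\x$ would have to be supported in a region forcing it either to leave $P$ across the part of $\de P$ that is fixed by $h$ -- which is impossible by the boundary conditions -- or to be a constant (index-zero) disc; a careful look at the local picture near each $x_i$, exactly as in \cite{HKM1}, rules out any positive-area contribution. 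Hence $\de\x=0$.

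Next I would establish well-definedness of the homology class $[\x]\in SFH(-M,-\Gamma)$. Here there are two layers of choices: the choice of basis $\mathbf{a}$ for $(S,P)$ (and the induced $\mathbf{b}$, $\ba$, $\bb$), and the choice of partial open book $(S,P,h)$ supporting a fixed contact manifold $(M,\xi)$. For the first, I would show that any two bases are related by a sequence of arc slides, and that each arc slide corresponds to a handleslide among the $\beta$-curves (together with an isotopy); one then checks that the distinguished generator $\x$ is carried to the distinguished generator of the new diagram by the associated chain homotopy equivalence, using that the triangle count realizing the handleslide map has a unique small holomorphic triangle near the relevant intersection points. For the second layer, by the stabilisation result quoted in the excerpt any two partial open books for $(M,\xi)$ are related by Giroux stabilisations, and each such stabilisation corresponds to a stabilisation of the sutured Heegaard diagram; I would check that the stabilisation map sends $\x$ to $\x$, again by a local model computation. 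Combining these, $[\x]$ depends only on $(M,\xi)$.

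The main obstacle, and the place where the argument requires genuine care rather than formal manipulation, is the analysis of holomorphic discs and triangles in the various local models: verifying that $\de\x=0$, and that the handleslide/stabilisation maps preserve the class $[\x]$, all come down to showing that certain moduli spaces near the points $x_i$ consist of exactly one rigid holomorphic curve and nothing else, with all other would-be contributions excluded by positivity of domains together with the fact that $h$ fixes a neighbourhood of $\de P\cap\de S$ and the sutures are pinned down. I would handle this by reducing, as in \cite{HKM1} and \cite{Ho}, to a standard ``nearest point'' domain argument: the generator $\x$ occupies the ``innermost'' position in $P$, so any domain of a disc out of $\x$ with nonnegative multiplicities and the correct boundary behaviour must be trivial. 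Once these local computations are in hand, the rest of the proof is the bookkeeping sketched above, and the theorem follows.
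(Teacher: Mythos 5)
This theorem is not proved in the paper at all: it is quoted from \cite{HKM1} as background, so there is no in-paper argument to compare yours against. Judged on its own terms, your outline follows the standard Honda--Kazez--Mati\'c strategy (cycle condition via a local analysis at the points $x_i$ and positivity of domains; invariance via arc slides, Giroux stabilisations of the partial open book, and triangle/stabilisation maps carrying the distinguished generator to the distinguished generator), and that is indeed how the result is established.

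Two points in your sketch need tightening. First, the mechanism forcing $\de\x=0$ is not that a disc would have to ``leave $P$ across the part of $\de P$ fixed by $h$''; it is that any nonconstant positive domain emanating from $\x$ in $(\Sigma,\bb,\ba)$ must, at some coordinate $x_i$, have nonzero multiplicity in a region of $\Sigma\setminus(\ba\cup\bb)$ that meets $\de\Sigma$, and such regions are excluded from sutured domains --- this is precisely the ``outermost/nearest point'' argument you mention only at the end, and it should carry the whole weight of the cycle claim. Second, an arc slide of the basis $\mathbf{a}$ changes both $a_i$ and the corresponding $b_i$, hence induces a handleslide of an $\alpha$-curve together with one of a $\beta$-curve (not a handleslide among the $\beta$-curves alone), and the reduction of invariance to Giroux stabilisations rests on the relative Giroux correspondence for contact manifolds with convex boundary, a substantial input that should be cited explicitly rather than folded into the bookkeeping. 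With these corrections your sketch is a faithful summary of the proof in \cite{HKM1}; the deferred moduli-space computations are exactly the technical content of that paper.
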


\begin{defn}
$EH(M,\xi)$ is the class $[\x(S,P,h)] \in SFH(-M,-\Gamma)$ for some partial open book $(S,P,h)$ supporting $(M,\xi)$.
\end{defn}

The type of invariants that we're going to deal with are either invariants of (complements of) Legendrian knots or invariants coming from contact structures on closed manifolds: this allows us to consider only sutured manifolds with sphere/torus boundary and one/two sutures, as described in Examples \ref{cont_sut_ex1} and \ref{cont_sut_ex2}.

Consider a closed contact manifold $(Y,\xi)$, and let $B\subset Y$ be a small, closed Darboux ball with convex boundary. Then consider the manifold $(Y(1),\xi(1))$ where $Y(1)$ is obtained from $Y$ by removing the interior of $B$, and $\xi(1)$ is $\xi|_{Y(1)}$.

\begin{prop}[\cite{HKM1}]
There is an isomorphism of graded complexes from $\HF(Y)$ to $SFH(Y(1))$ that maps the Ozsv\'ath-Szab\'o contact invariant $c(Y,\xi)$ to the Honda-Kazez-Mati\'c class $EH(Y(1),\xi(1))$.
\end{prop}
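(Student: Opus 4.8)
The plan is to realise the isomorphism $\HF(Y)\cong SFH(Y(1))$ of Proposition~\ref{HFvsSFH} at the chain level, on a Heegaard diagram that is adapted at the same time to Ozsv\'ath--Szab\'o's definition of the contact class $c$ \cite{OScontact} and to the Honda--Kazez--Mati\'c definition of $EH$, and then to observe that the resulting chain isomorphism carries the distinguished cycle computing one invariant onto the one computing the other. One point of bookkeeping to keep in mind throughout: both $c$ and $EH$ are by definition classes in the Floer homology of the \emph{reversed} manifold, so the isomorphism of interest is really the one from $\HF(-Y)$ to $SFH((-Y)(1))$, and the statement is phrased for $Y$ by the obvious relabelling.

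The bridge between the two constructions is the operation of capping off a partial open book. Given a partial open book $(S,P,h)$ for $(Y(1),\xi(1))$ arranged so that $S\setminus P$ is a single disk, the open book $(S,\bar h)$ with $\bar h|_P = h$ and $\bar h$ the identity on $S\setminus P$ supports $(Y,\xi)$, and a cut system of arcs for the page $S$ can be chosen that is simultaneously a basis for $(P,\de P\cap\de S)$ in the sense of Definition~\ref{basis}. With such compatible choices the two prescriptions produce literally the same attaching curves $\alpha_i = a_i\cup(-a_i)$ and $\beta_i$ and the same tuple of intersection points $\x=\{x_i\}$; the only difference is that Ozsv\'ath--Szab\'o work on the closed surface $\Sigma_{OS} = S\cup(-S)$ with a basepoint $z$ placed in the product region over $S\setminus P$, whereas Honda--Kazez--Mati\'c delete a neighbourhood of exactly that region, so that $\Sigma_{HKM} = \Sigma_{OS}\setminus D^2_z$ with the single suture sitting where $z$ was.

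But deleting a disk about the basepoint of a pointed Heegaard diagram for $-Y$ is precisely the recipe producing a sutured Heegaard diagram for $(-Y)(1)$ that induces the isomorphism of Proposition~\ref{HFvsSFH}: generators are in obvious bijection, and the two differentials agree because a holomorphic disk in the sutured picture is the same thing as a holomorphic disk in the closed picture missing $z$ (no disk can cross the suture). Under this identification the cycle defining $c(Y,\xi)$ and the cycle $\x(S,P,h)$ defining $EH(Y(1),\xi(1))$ are literally the same generator; since the bijection also matches the $\spin$-structure of a generator with the relative $\spin$-structure it carries in the sutured diagram, it is an isomorphism of graded complexes, and the proposition follows. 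To see that nothing depends on the auxiliary data I would invoke invariance of $c$ under change of cut system and positive stabilisation of $(S,\bar h)$, invariance of $EH$ under change of basis and Giroux stabilisation of $(S,P,h)$ \cite{HKM1,Ho}, and check that a stabilisation on one side corresponds to one on the other.

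I expect the real work to be in the second step: producing a partial open book for $Y(1)$ and an open book for $Y$ that cap off to one another in the strong sense above --- so that the OS and HKM Heegaard diagrams differ only by puncturing at the basepoint --- requires some care in arranging the combinatorics (in particular in reconciling ``cut system of a page'' with ``basis of $(P,\de P\cap\de S)$'', and possibly in performing a preliminary stabilisation so that $S\setminus P$ is a disk), and one must be vigilant about the orientation conventions so that $c$ and $EH$ really land in the same group.
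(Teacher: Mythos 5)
The paper does not prove this proposition at all: it is quoted as background and attributed to \cite{HKM1}, so the relevant comparison is with the argument in that reference. Your overall route --- arrange an open book for $(Y,\xi)$ and a partial open book for $(Y(1),\xi(1))$ that differ only by deleting the product region over $S\setminus P$, observe that puncturing the pointed diagram at $z$ is exactly the recipe behind Proposition \ref{HFvsSFH}, match generators and note that sutured differentials are the $n_z=0$ differentials, then dispose of the auxiliary choices by stabilisation invariance on both sides --- is indeed the shape of the argument in \cite{HKM1}.

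There is, however, one genuine gap: you treat ``the cycle defining $c(Y,\xi)$'' as if Ozsv\'ath--Szab\'o's definition \cite{OScontact} singled out a distinguished generator in an open-book-adapted Heegaard diagram. It does not. In \cite{OScontact} the contact invariant is defined as the image of the generator of the bottom Alexander-graded summand of $\HFK(-Y,B)$, $B$ the binding of a (stabilised) open book, under the natural map to $\HF(-Y)$; no explicit cycle representative in the diagram you construct comes with that definition. The statement that $c(Y,\xi)$ is represented by the intersection-point generator $\x$ supported on the page (the exact analogue of $\x(S,P,h)$) is the main theorem of Honda--Kazez--Mati\'c's earlier paper on the contact class in Heegaard Floer homology, and it is precisely this reformulation that makes the ``literally the same generator'' step legitimate; it is also the substantive input on which the proof in \cite{HKM1} rests. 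Without citing or reproving that identification, your argument only shows that $EH(Y(1),\xi(1))$ corresponds to the class of the page generator in $\HF(-Y)$, not that this class is $c(Y,\xi)$. With that input added, the remaining bookkeeping (capping off so that $S\setminus P$ is a disk, compatibility of a cut system with a basis in the sense of Definition \ref{basis}, and the orientation-reversal conventions you flag) is routine and your sketch is correct.
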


Suppose now that $L\subset Y$ is a Legendrian knot with respect to a contact structure $\xi$: the contact manifold $Y_L$ defined in Example \ref{cont_sut_ex2} determines a contact invariant $EH(Y_L) \in SFH(-Y_L)$. We'll denote this invariant by $EH(L)$, considering it as an invariant of the Legendrian isotopy class of $L$ rather than of its complement.

\subsection{Gluing maps}

In their paper \cite{HKM2}, Honda, Kazez and Mati\'c define maps associated to the gluing of a contact manifold to another one along some of the boundary components, and show that these maps preserve their $EH$ invariant. Consider two sutured manifolds $(M,\Gamma) \subset (M',\Gamma')$, where $M$ is embedded in ${\rm Int}(M')$; let $\xi$ be a contact structure on $N:=M'\setminus {\rm Int}(M)$ such that $\de N$ is $\xi$-convex and has dividing curves $\Gamma \cup \Gamma'$. For simplicity, and since this will be the only case we need, we'll restrict to the case when each connected component of $N$ intersects $\de M'$ (\emph{i.e.} gluing $N$ to $M$ doesn't kill any boundary component).

\begin{thm}\label{EHmap}
The contact structure $\xi$ on $N$ induces a \deff{gluing map} $\Phi_\xi$, that is a linear map $\Phi_\xi: SFH(-M,-\Gamma)\to SFH(-M',-\Gamma')$. If $\xi_M$ is a contact structure on $M$ such that $\de M$ is $\xi_M$-convex with dividing curves $\Gamma$, then $\Phi_{\xi}(EH(M,\xi_M)) = EH(M',\xi_M\cup \xi)$.
\end{thm}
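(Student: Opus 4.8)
The plan is to construct $\Phi_\xi$ by first reducing to the case where the gluing region $N$ is ``small'' and then using the functoriality and gluing behaviour of $EH$ together with the partial open book machinery. First I would choose a partial open book $(S_N, P_N, h_N)$ for the contact manifold $N$ (as a contact manifold with convex boundary, unique up to Giroux stabilisation), and similarly, for a contact structure $\xi_M$ on $M$ with the prescribed convex boundary and dividing set $\Gamma$, a partial open book $(S_M, P_M, h_M)$. The key observation is that a partial open book for $(M', \xi_M \cup \xi)$ can be assembled by gluing $(S_M, P_M, h_M)$ and $(S_N, P_N, h_N)$ along the portions of their boundaries corresponding to the sutures $\Gamma$; this is the open-book analogue of the fact that contact handle attachments and contact gluings correspond to modifications of the page. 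On the level of the associated balanced Heegaard diagrams, $\Sigma' = \Sigma_M \cup_\Gamma \Sigma_N$, with $\ba' = \ba_M \cup \ba_N$ and $\bb' = \bb_M \cup \bb_N$ after a suitable choice of bases. This realises $SFC(-M,-\Gamma)$ as a direct summand (as a group, via the generators supported on the $M$-side) of $SFC(-M',-\Gamma')$, and one defines $\Phi_\xi$ on homology by the map induced by the natural inclusion of generators, twisted appropriately by the $EH$-cycle $\x(S_N, P_N, h_N)$ on the $N$-side.

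The second step is to verify that this $\Phi_\xi$ is well-defined, i.e. independent of all the choices: the partial open book for $N$ (up to Giroux stabilisation), the bases for $(S_M, P_M)$ and $(S_N, P_N)$, and the Heegaard moves relating different diagrams. Independence of the basis and of Heegaard moves on each side follows from the naturality statements already in \cite{HKM1} (the invariance of $\x(S,P,h)$ in $SFH$), while independence under Giroux stabilisation of the $N$-side is the content of the compatibility of $EH$ with stabilisation; one has to check that stabilising $N$ changes the glued diagram only by a stabilisation and a handleslide supported in the interior, hence does not change the induced map. Linearity is immediate from the construction, since the map is induced by an inclusion of generating sets followed by a chain map.

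The final step — and the heart of the theorem — is the identity $\Phi_\xi(EH(M,\xi_M)) = EH(M',\xi_M \cup \xi)$. Here I would argue that, under the gluing of partial open books described above, the distinguished generator $\x(S_M \cup S_N, P_M \cup P_N, h_M \cup h_N)$ representing $EH(M',\xi_M\cup\xi)$ is literally the union of $\x(S_M,P_M,h_M)$ and $\x(S_N,P_N,h_N)$: the generator supported inside $P' = P_M \cup P_N$ decomposes as a product of the two. Since $\Phi_\xi$ was defined precisely as ``include the $M$-generators and tensor with $\x(S_N,P_N,h_N)$'', this matching is essentially tautological once the diagrams are set up correctly. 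The main obstacle I anticipate is the first step: showing that one can always arrange the partial open books and Heegaard diagrams for $M$, $N$ and $M'$ to glue compatibly, i.e. that a partial open book for $M'$ can be cut along an appropriate collection of arcs into contributions from $M$ and from $N$ with matching sutures. This requires care with the asymmetric halves $S\times[0,1/2]$ and $P\times[1/2,1]$ and with the convexity and dividing-set bookkeeping along $\de N = \Gamma \cup \Gamma'$; in particular, one must ensure the gluing does not introduce extra sutures or violate the balancing condition $\chi(R_+) = \chi(R_-)$, which is where the hypothesis that each component of $N$ meets $\de M'$ is used.
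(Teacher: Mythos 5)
The paper does not actually prove this statement: Theorem \ref{EHmap} is quoted from Honda--Kazez--Mati\'c \cite{HKM2}, so there is no internal proof to compare against, and your proposal has to stand on its own as a sketch of their construction. In spirit it does follow the right circle of ideas (contact-compatible extensions of Heegaard diagrams, the glued contact class being represented by a union of generators), but as written it has two genuine gaps.

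First, your construction starts by choosing a partial open book for $(M,\xi_M)$, so the map you define a priori depends on a contact structure on $M$. But $\Phi_\xi$ must be defined on all of $SFH(-M,-\Gamma)$ for an arbitrary sutured manifold $(M,\Gamma)$, with no contact structure on $M$ at all --- this is exactly the point the present paper relies on in Subsection \ref{substabmaps} (``no such contact structure is required in the definition of $\Phi_\xi$''). The correct construction extends an \emph{arbitrary} admissible sutured Heegaard diagram for $(M,\Gamma)$ by a contact-compatible piece built only from a partial open book for $(N,\xi)$; if you instead glue two partial open books you must then argue, separately, that the resulting map is independent of $\xi_M$, which your sketch does not address. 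Second, and more seriously, the heart of the matter is missing: the assignment $\mathbf{x}\mapsto \mathbf{x}\cup \mathbf{x}(S_N,P_N,h_N)$ is not obviously a chain map, because the generators supported on the $M$-side do not span a subcomplex of $SFC(-M',-\Gamma')$ in general (your phrase ``direct summand as a group'' conceals this). One needs the key holomorphic-curve lemma of \cite{HKM2}: the diagram can be arranged so that any holomorphic disk emanating from a generator of the form $\mathbf{x}\cup\mathbf{x}_N$ has a corner structure forcing its output to be again of the form $\mathbf{y}\cup\mathbf{x}_N$ (positivity of domains near the contact-compatible region). Without this the definition of $\Phi_\xi$, its independence of choices (which also requires commuting the construction with Heegaard moves and Giroux stabilisations of the $N$-piece, not just invariance on each side), and the identity $\Phi_\xi(EH(M,\xi_M))=EH(M',\xi_M\cup\xi)$ all remain unproved; the last step is only ``essentially tautological'' after that analytic input, and likewise the claim that partial open books simply concatenate along $\Gamma$ to give a partial open book for $(M',\xi_M\cup\xi)$ needs justification rather than assertion.
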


This theorem has interesting consequences, even in simple cases:

\begin{cor}\label{nonvanishing}
If $(M,\Gamma)$ embeds in a Stein fillable contact manifold $(Y,\xi)$, and $\de M$ is $\xi$-convex, divided by $\Gamma$, then $EH(M,\xi|_M)$ is not trivial.
\end{cor}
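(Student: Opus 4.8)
The plan is to deduce non-vanishing of $EH(M,\xi|_M)$ from the non-vanishing of the contact invariant of the ambient Stein fillable manifold, using the gluing map of Theorem \ref{EHmap} as the bridge. First I would set $N := Y \setminus {\rm Int}(M)$, which by hypothesis carries the restricted contact structure $\xi|_N$ making $\de N$ convex with dividing set $\Gamma$ (plus, if $\de Y \neq \emptyset$, a collection of curves on the outer boundary; in the closed case one should first remove a Darboux ball to land in the setting of the previous propositions, so that $Y$ is replaced by $Y(1)$ and $\Gamma' = \{\gamma\}$ is a single suture on the sphere boundary). Theorem \ref{EHmap} then furnishes a linear gluing map $\Phi_{\xi|_N} : SFH(-M,-\Gamma) \to SFH(-Y(1), -\Gamma')$ satisfying $\Phi_{\xi|_N}\bigl(EH(M,\xi|_M)\bigr) = EH\bigl(Y(1), \xi|_M \cup \xi|_N\bigr) = EH(Y(1), \xi(1))$.

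The key point is then that the right-hand side is non-zero. By the proposition identifying $\HF(Y)$ with $SFH(Y(1))$ and carrying $c(Y,\xi)$ to $EH(Y(1),\xi(1))$, it suffices to know that $c(Y,\xi) \neq 0$; and this is exactly the theorem of Ozsv\'ath and Szab\'o that the contact invariant of a Stein fillable (indeed, weakly fillable) contact 3-manifold is non-vanishing. Since $\Phi_{\xi|_N}$ is linear and its image contains the non-zero element $EH(Y(1),\xi(1)) = \Phi_{\xi|_N}(EH(M,\xi|_M))$, the element $EH(M,\xi|_M)$ cannot itself be zero. This is the whole argument in outline.

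The main obstacle is bookkeeping rather than depth: one must make sure the hypotheses of Theorem \ref{EHmap} are genuinely met, in particular that every component of $N$ meets $\de(Y(1))$ so that gluing does not kill a boundary component — this is automatic here since $N$ is connected (it is the complement of $M$ together with a ball, and $Y$ is connected) and contains the sphere boundary. One should also be slightly careful that the contact structure $\xi|_M \cup \xi|_N$ reassembled from the two pieces is isotopic to the original $\xi(1)$, which follows from the uniqueness in the gluing construction once the dividing sets match along the cutting surface $\de M$. Everything else — the identification of $SFH(Y(1))$ with $\HF(Y)$, the behaviour of $c$ under the Stein hypothesis — is quoted from the results recalled above.
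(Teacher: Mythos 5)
Your argument is exactly the intended one: the paper states this corollary without proof as an immediate consequence of Theorem \ref{EHmap}, namely glue $N=Y(1)\setminus{\rm Int}(M)$ (after excising a Darboux ball from the complement), use $\Phi_{\xi|_N}(EH(M,\xi|_M))=EH(Y(1),\xi(1))$, identify this with $c(Y,\xi)$, and invoke Ozsv\'ath--Szab\'o's non-vanishing of $c$ for Stein fillable structures, so linearity forces $EH(M,\xi|_M)\neq 0$. The only point to phrase more carefully is the connectedness of $N$: it does not follow from connectedness of $Y$ alone, but it does hold in the cases the paper actually uses (where $\de M$ is a single sphere or torus), and in general one would remove a Darboux ball from each component of $Y\setminus{\rm Int}(M)$.
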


There's also a naturality statement, concerning the composition of two gluing maps: suppose that we have three sutured manifolds $(M,\Gamma)\subset(M',\Gamma')\subset(M'',\Gamma'')$ as at the beginning of the section, and suppose that $\xi$ and $\xi'$ are contact structures on $M'\setminus{\rm Int}(M)$ and $M''\setminus{\rm Int}(M')$ respectively, that induce sutures $\Gamma$, $\Gamma'$ and $\Gamma''$ on $\de M$, $\de M'$ and $\de M''$ respectively.

\begin{thm}\label{associativityphi}
If $\xi$ and $\xi'$ are as above, then $\Phi_{\xi\cup\xi'} = \Phi_{\xi'}\circ\Phi_{\xi}$.
\end{thm}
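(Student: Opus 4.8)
The plan is to unwind the definition of the gluing map, which in \cite{HKM2} is built up from a \emph{contact handle decomposition} of the collar region (equivalently, from a partial open book), and then to observe that such a decomposition for $\xi\cup\xi'$ is nothing but the concatenation of decompositions for $\xi$ and for $\xi'$; the statement then follows from the associativity of composition of linear maps.

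First I would recall how $\Phi_\xi$ is defined in the construction underlying Theorem \ref{EHmap}: the contact structure $\xi$ on $N_1:=M'\setminus{\rm Int}(M)$ is presented by a finite sequence of contact handles attached to $(M,\Gamma)$ along $\de M$, building up $(M',\Gamma')$ one handle at a time; each handle attachment induces an elementary linear map on sutured Floer homology; and $\Phi_\xi$ is the composition of these elementary maps. A crucial ingredient, also established in \cite{HKM2}, is that this composite is independent of the chosen contact handle decomposition — this is exactly what makes $\Phi_\xi$ well defined.

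Next I would note that $\de M'$ is convex for both $\xi$ and $\xi'$ with dividing set exactly $\Gamma'$ on either side, so $\xi\cup\xi'$ is a genuine contact structure on $N_1\cup N_2=M''\setminus{\rm Int}(M)$ — where $N_2:=M''\setminus{\rm Int}(M')$ — with convex boundary divided by $\Gamma\cup\Gamma''$; moreover the simplifying hypothesis that each component of the collar meets the outer boundary is inherited, since every component of $N_1$ meets $\de M'$ and is therefore glued to a component of $N_2$, which meets $\de M''$. Now pick a contact handle decomposition of $N_1$ on $(M,\Gamma)$ and one of $N_2$ on $(M',\Gamma')$; performing first the handles of $N_1$ and then those of $N_2$ yields a contact handle decomposition of $N_1\cup N_2$ on $(M,\Gamma)$ in which $(M',\Gamma')$ occurs as the intermediate stage. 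By the independence-of-decomposition statement, $\Phi_{\xi\cup\xi'}$ may be computed from this concatenated decomposition, hence equals the composition of all of the elementary handle maps; grouping the first block of these maps gives $\Phi_\xi\colon SFH(-M,-\Gamma)\to SFH(-M',-\Gamma')$ and the second block gives $\Phi_{\xi'}\colon SFH(-M',-\Gamma')\to SFH(-M'',-\Gamma'')$, whence $\Phi_{\xi\cup\xi'}=\Phi_{\xi'}\circ\Phi_\xi$.

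The content of the argument is thus almost entirely contained in the well-definedness already recorded in Theorem \ref{EHmap}; the only thing that requires care — and this is where I expect the main subtlety to be — is verifying that truncating the concatenated decomposition after the $N_1$-handles reproduces, on the nose, the data and bookkeeping used to define $\Phi_\xi$, and that the tail reproduces the data defining $\Phi_{\xi'}$, so that the identifications of the middle group $SFH(-M',-\Gamma')$ used on both sides genuinely agree. Once this is in place the theorem is immediate. If one prefers the partial open book formulation, the same argument runs verbatim after replacing ``concatenation of contact handle decompositions'' by ``stacking of partial open books'', using that the partial open book of $\xi\cup\xi'$ is obtained by gluing the pages of those of $\xi$ and $\xi'$ along $\de M'$.
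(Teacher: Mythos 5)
There is nothing in the paper to compare against here: Theorem \ref{associativityphi} is stated without proof and is imported from \cite{HKM2}, so the relevant comparison is with the argument given there. Your overall strategy --- decompose the gluing layer into elementary pieces, invoke independence of $\Phi_\xi$ from the chosen decomposition, and observe that a decomposition of $N_1\cup N_2$ can be arranged to pass through $(M',\Gamma')$ as an intermediate stage --- is the standard route to the composition law, and your check that the simplifying hypothesis (every component of the layer meets the outer boundary) is inherited by the concatenated layer is correct.

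Two caveats, one of attribution and one of substance. First, the formulation you lean on is not the one available in \cite{HKM2}: there $\Phi_\xi$ is not defined handle by handle, but all at once, by extending a contact-compatible sutured Heegaard diagram of $(M,\Gamma)$ using a partial open book for the entire layer $(N,\xi)$ and appending the corresponding $EH$-type tuple of intersection points; the well-definedness proved there is independence of \emph{those} choices (the partial open book up to stabilisation, and the contact-compatible extension), not independence of a contact handle decomposition. The description of $\Phi_\xi$ as a composite of elementary contact-handle maps, together with its invariance under change of handle decomposition, is a later result (due to Juh\'asz and Zemke, who also verify it agrees with the HKM map), so as written your ``crucial ingredient'' is quoted from a reference that does not contain it. Second, even granting a formulation, essentially all of the content of the theorem lives in the step you only flag as the expected subtlety: one must produce a decomposition of $\xi\cup\xi'$ whose truncation after the $\xi$-part yields, on the nose, an admissible diagram for $(M',\Gamma')$ inducing the same identification of $SFH(-M',-\Gamma')$ that is used to define $\Phi_{\xi'}$. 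In the partial open book picture this is not literal ``gluing of pages''; it requires the stabilisation and compatibility arguments of \cite{HKM2}. So your outline is the right one, but it is only as complete as the invariance statement it defers to, and that input must be taken in the form actually established in \cite{HKM2} or in the later contact-handle reformulation.
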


Much of our interest will be devoted to stabilisations of Legendrian knots and associated maps, whose discussion will occupy Subsection \ref{substabmaps}: we give a brief summary of the contact side of their story here.

Let's start with a definition, due to Honda \cite{Ho}:
\begin{defn}\label{basic_slices}
Let $\eta$ be a tight contact structure on $T^2\times I$ with two dividing curves on each boundary component: call $\gamma_i$, $-\gamma_i$ the homology class of the two dividing curves on $T^2 \times\{i\}$, and let $s_i\in \Q\cup\{\infty\}$ be their slope. $(T^2 \times I,\eta)$ is a \deff{basic slice} if it is of the form above, and also satisfies the following three conditions:
\begin{itemize}\itemsep-1pt
\item $\{\gamma_0, \gamma_1\}$ is a basis for $H_1(T^2)$;
\item $\xi$ is \deff{minimally twisting}, \emph{i.e.} if $T_t = T\times \{t\}$ is convex, the slope of the dividing curves on $T_t$ belongs to $[s_0, s_1]$ (where we assume that if $s_0>s_1$ the interval $[s_0,s_1]$ is $[-\infty,s_1]\cup[s_0,\infty]$);
\end{itemize}
\end{defn}

Honda proved the following:

\begin{prop}[\cite{Ho}]
For every integer $t$ there exist exactly two basic slices $(T^2\times I, \xi_j)$ (for $j=1,2$) with boundary slopes $t/1$ and $(t-1)/1$. The sutured complement of a stabilisation $L'$ of $L$ is gotten by attaching one of the two basic slices to $Y_L$, where the trivialization of $T^2$ is given by identifying the slopes $0/1$ and $t/1$ with a meridian $\mu$ and the contact framing $c$ for $L$, respectively.
\end{prop}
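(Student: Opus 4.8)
The plan is to derive both assertions from Honda's convex--surface--theoretic analysis of $T^2\times I$ in \cite{Ho}, combined with the standard model for a neighbourhood of a Legendrian knot.

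For the first assertion, observe that the two prescribed boundary slopes $t/1$ and $(t-1)/1$ are joined by an edge of the Farey tessellation, i.e. the corresponding primitive homology classes form a basis of $H_1(T^2;\Z)$; this is precisely the hypothesis in the first bullet of Definition \ref{basic_slices}. Given a tight, minimally twisting $(T^2\times I,\eta)$ with these boundary conditions, one puts the intermediate tori $T_t$ in convex position, applies Legendrian realisation to a vertical annulus with Legendrian boundary on rulings of $T_0$ and $T_1$, and reads off a bypass decomposition. Since the two slopes are Farey--adjacent, minimal twisting forces the dividing slope to stay at $t/1$ and then jump directly to $(t-1)/1$, so a single bypass layer realises the whole cobordism; there are exactly two possible signs for such a bypass, hence at most two tight minimally twisting structures. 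Both are realised --- for instance as sub-$T^2\times I$'s of a larger standard neighbourhood --- and they are non-isotopic rel boundary because their relative Euler classes (equivalently, the signs of the bypass) differ. This is Honda's basic--slice classification.

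For the second assertion, recall that a Legendrian knot $L$ with $tb(L)=t$ has a standard convex neighbourhood $\nu(L)\cong S^1\times D^2$ whose boundary carries two parallel dividing curves; in the trivialisation of $\partial\nu(L)$ identifying the slope $0/1$ with the meridian $\mu$ and the slope $t/1$ with the contact framing $c$, the sutures $\Gamma_L$ of $Y_L$ (Example \ref{cont_sut_ex2}) have slope $t/1$. A stabilisation $L'=S_\pm(L)$ can be realised inside $\nu(L)$, with $\nu(L')\subset\nu(L)$ and $tb(L')=t-1$, so $\partial\nu(L')$ has dividing curves of slope $(t-1)/1$ in the same coordinates. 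Setting $N_\pm:=\nu(L)\setminus\mathrm{Int}\,\nu(L')$, this is a copy of $T^2\times I$ with the required pair of boundary slopes, and
\[
S^3\setminus\mathrm{Int}\,\nu(L') \;=\; \bigl(S^3\setminus\mathrm{Int}\,\nu(L)\bigr)\cup_{T^2}N_\pm \;=\; Y_L\cup N_\pm ,
\]
so the sutured complement of the stabilisation is obtained by gluing $N_\pm$ onto $Y_L$. It remains to identify $N_\pm$ with a basic slice. Tightness of $N_\pm$ is inherited from tightness of the solid--torus model $\nu(L)$; minimal twisting follows by arranging $\nu(L')$ inside $\nu(L)$ via the front--projection model of a single stabilisation, so that every convex torus parallel to the boundary in $N_\pm$ has dividing slope in $[t/1,(t-1)/1]$ --- equivalently, extra twisting would force a full $\pi$--rotation inside $N_\pm$, contradicting $tb(L')=t-1$ and the standardness of $\nu(L')$. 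Finally, $N_+$ and $N_-$ are the two distinct basic slices: the sign of the half--cusp introduced in the front projection when forming $S_\pm(L)$ is precisely the sign of the bypass attached to $\partial\nu(L)$, and is detected by the relative Euler class of $N_\pm$ (reflecting $\mathrm{rot}(S_\pm(L))=\mathrm{rot}(L)\pm1$), which is the invariant distinguishing the two structures found above.

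The main obstacle is the verification of minimal twisting for $N_\pm$: one must rule out that a single stabilisation smuggles in extra twisting, which requires either a careful analysis of the standard stabilisation picture in the front projection or an appeal to the uniqueness half of Honda's $T^2\times I$ classification; the associated bookkeeping --- matching which of $S_+$, $S_-$ produces which relative Euler class in the fixed trivialisation $0/1\leftrightarrow\mu$, $t/1\leftrightarrow c$ --- is the other place where care is needed. The first assertion rests entirely on the convex--surface machinery of \cite{Ho}, which I would quote rather than reprove.
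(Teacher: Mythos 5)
The paper offers no proof of this proposition---it is quoted directly from Honda's classification \cite{Ho}---and your sketch reproduces the standard argument from that source: Farey-adjacency of the slopes $t/1$ and $(t-1)/1$, a single bypass layer with exactly two possible signs distinguished by relative Euler class, and identification of the layer $N_\pm=\nu(L)\setminus\mathrm{Int}\,\nu(L')$ between nested standard neighbourhoods with such a slice, the sign matching the sign of the stabilisation via $\mathrm{rot}(S_\pm(L))=\mathrm{rot}(L)\pm1$. The one point you flag, minimal twisting of $N_\pm$, is indeed where the substance lies, and it is closed exactly as you suggest---by the uniqueness part of Honda's classification of tight solid tori (equivalently the analysis of stabilisations inside a standard neighbourhood in \cite{EH})---so the proposal is correct and follows essentially the cited route.
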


These two different layers correspond to the positive and negative stabilisation of $L$, once we've chosen an orientation for the knot; reversing the orientation swaps the labelling signs. Since we'll be considering oriented Legendrian knots, we can label the two slices with a sign.

\begin{defn}\label{stabmaps_def}
We call \deff{stabilisation maps} the gluing maps associated to the attachment of a stabilisation basic slice: these will be denoted with $\sigma_\pm$.
\end{defn}

\begin{rmk}
As it happens for the Stipsicz-V\'ertesi map \cite{SV}, these basic slice attachments correspond to single bypass attachments, too.
\end{rmk}

\section{A few facts on $SFH(S^3_{K,n})$ and $\sigma_{\pm}$}\label{HFKstab}

Given a topological knot $K$ in $S^3$, denote with $S^3_m(K)$ the manifold obtained by (topological) $m$-surgery along $K$, and let $\ltilde{K}$ be the dual knot in $S^3_m(K)$, that is the core of the solid torus we glue back in. Notice that an orientation on $K$ induces an orientation of $\ltilde{K}$, by imposing that the intersection of the meridian $\mu_K$ of $K$ on the boundary of the knot complement has intersection number $+1$ with the meridian $\mu_{\ltilde K}$ of $\ltilde{K}$ on the same surface.

Fix a contact structure $\xi$ on $S^3$ and a Legendrian representative $L$  of $K$: we'll write $t$ for $tb(L)$. Since $t$ measures the difference between the contact and the Seifert framings of $L$, $S^3_{t}(K)_{\ltilde{K},\infty}$ and $S^3_L$ are sutured diffeomorphic: in particular, $EH(L)$ lives in $SFH(-S^3_{t}(K)_{\ltilde{K},\infty}) = \HFK(-S^3_{t}(K),\ltilde{K})$, the identification depending on the choice of an orientation for $K$ (or $\ltilde{K}$).

We will often write $\CFK(Y,K)$ to denote any chain complex computing $\HFK(Y,K)$ that comes from a Heegaard diagram, even though the complex itself depends on the choice of the diagram.

\subsection{Gradings and concordance invariants}

The groups $\HFK(S^3,K)$ and $\HFK(-S^3_{m}(K),\ltilde{K})$ come with a grading, that we call the \deff{Alexander grading}. A Seifert surface $F\subset S^3$ for $K$ gives a relative homology class
\[
[F,\de F]\in H_2(S^3\setminus N(K), \de N(K)) = H_2(S^3_m(K)\setminus N(\ltilde{K}), \de N(\ltilde{K})).\]

Given a generator $\x\in\CFK(S^3,K)$, there's an induced relative $\spin$ structure $\s(\x)$ in $\underline{\rm Spin}^c(S^3,K)$ \cite[Equation 2]{HP}, and the Alexander grading of $\x$ is defined as
\[
A(\x) = \frac12\langle c_1(\s(\x))-PD([\mu_K]), [F,\de F]\rangle,
\]
where $PD$ denotes Poincar\'e duality.

Likewise, given a generator $\x\in\CFK(-S^3_{m}(K),\ltilde{K})$, there's an induced relative $\spin$ structure $\s(\x)\in\underline{\rm Spin}^c(S^3_m(K),\ltilde{K})$, and we can define $A(\x)$ as
\begin{equation}\label{defAlex}
A(\x) = \frac12\langle c_1(\s(\x))-PD([\mu_{\ltilde K}]), [F,\de F]\rangle.
\end{equation}

We now turn to recalling the definition of $\tau(K)$, due to Ozsv\'ath and Szab\'o \cite{OStau}.

Recall that the Alexander grading induces a filtration on the knot Floer chain complex $(\CFK(S^3,K), \de)$, where the differential $\de$ ignores the presence of the second basepoint, that is $H_*(\CFK(S^3,K), \de) = \HF(S^3)$. In particular, every sublevel $\CFK(S^3,K)_{A\le s}$ is preserved by $\de$, and we can take its homology.

\begin{defn}
$\tau(K)$ is the smallest integer $s$ such that the inclusion of the $s$-th filtration sublevel induces a nontrivial map \[H_*(\CFK(S^3,K)_{A\le s}, \de)\longrightarrow \HF(S^3) = \F.\]
\end{defn}

This invariant turns out to provide a powerful lower bound for the slice genus of $K$, in the sense that $|\tau(K)|\le g_*(K)$ \cite{OStau}. One of the properties it enjoys, and that we'll need, is that $\tau(\overline{K}) = -\tau(K)$ for every $K$.

\subsection{Modules}\label{modules}

We now turn our attention back to $\HFK(-S^3_t(K),\ltilde{K}) \simeq SFH(-S^3_{K,t})$. Recall that this is a $\F$-vector space on which the $A$ defines a grading.

The group $\CFK(S^3,K)$ is a graded vector space that comes with two differentials, $\de_K$ and $\de$, such that the complex $(\CFK(S^3,K),\de)$ has homology $\HF(S^3) = \F$, while the complex $(\CFK(S^3,K),\de_K)$ is the associated graded object with respect to the Alexander filtration. By definition $\HFK(S^3,K)$ is the homology of this latter complex; as such, it inherits an Alexander grading that we call $A$.

Let's call $d = \dim\HFK(S^3,K)$, and fix a basis $\mathcal{B} = \{\eta_i,\eta'_j \mid 0\le i < d\}$ of $\CFK(S^3,K)$ such that the set $\{\eta^{top}_i, (\eta'_j)^{top}\}$ of the highest nontrivial Alexander-homogeneous components of the $\eta_i$'s and $\eta'_j$'s is still a basis for $\CFK(S^3,K)$, and the following relations hold (see \cite[Section 11.5]{LOT}):
\[
\begin{array}{lll}
\de \eta_0 = 0 &\quad & \de_K \eta_0 = 0 \\ \de \eta_{2i-1} = \eta_{2i} &\quad & \de_K \eta_i =  0 \\
\de \eta_{2j-1}'  = \eta_{2j}' &\quad & \de_K \eta'_{2j-1} = \eta'_{2j}.
\end{array}
\]
Observe that the set of homology classes of the $\eta_i$'s is a basis for $\HFK(S^3,K) = H_*(\CFK(S^3,K),\de_K)$. We'll write $A(\eta)$ for $A(\eta^{top})$. Finally, call $\delta(i) = A(\eta_{2i-1})-A(\eta_{2i})$; let's remark that by definition $A(\eta_0) = \tau:=\tau(K)$.

\begin{thm}[\cite{LOT}]\label{Heddenthm}
The \emph{homology} group $\HFK(-S^3_{m}(K),\ltilde{K})$ is an $\F$-vector space with basis $\{d_{i,j}, d_{i,j}^*, u_\ell\mid 1\le i \le k, 1\le j \le \delta(i), 1\le \ell \le |2\tau-m|\}$, where the generators satisfy $A(d_{i,j}) = A(\eta_{2i})-(j-1)-(m-1)/2 = -A(d_{i,j}^*)$ and $A(u_\ell) = \tau-(\ell-1)-(m-1)/2$.
\end{thm}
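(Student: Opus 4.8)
The plan is to pass through bordered Heegaard Floer homology, as in \cite{LOT}. The dual knot $\ltilde K\subset S^3_m(K)$ is the core of the surgery solid torus, so the pair $(S^3_m(K),\ltilde K)$ is obtained from the knot exterior $X_K=S^3\setminus N(K)$ by gluing in a solid torus along the $m$-framed parametrisation of $\de X_K$. Encoding the solid torus together with its core by a doubly-pointed bordered diagram $V$, the pairing theorem gives
\[
\CFK(-S^3_m(K),\ltilde K)\ \simeq\ \widehat{CFA}(V)\boxtimes\CFD(X_K,m),
\]
where, with the conventions for orientation reversal of bordered modules, the two basepoints of $V$ put an Alexander filtration on the right-hand side whose associated-graded homology is $\HFK(-S^3_m(K),\ltilde K)$. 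So the first step is to write $\CFD(X_K,m)$ down explicitly, the second to tensor with the small module $\widehat{CFA}(V)$, and the third to take homology and separate it by the grading $A$.

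For the first step I would quote the description of the type $D$ module of a framed knot exterior from \cite[Section~11.5]{LOT}. Relative to a basis of $\CFK(S^3,K)$ that is simultaneously vertically and horizontally simplified --- and $\mathcal{B}$ is, by construction, exactly of this type, with $\eta_0$ the distinguished generator in both senses and $\tau=A(\eta_0)$ --- the module $\CFD(X_K,m)$ decomposes as: one $\iota_0$-generator for each element of $\mathcal{B}$; a \emph{vertical chain} of $\delta(i)$ generators in the $\iota_1$-idempotent for each vertical arrow $\de\eta_{2i-1}=\eta_{2i}$; a \emph{horizontal chain} for each horizontal arrow of $CFK^\infty(K)$; and a single \emph{unstable chain} attached to $\eta_0$ of length $|2\tau-m|$, whose coefficient maps --- in particular whether the chain starts with a $\rho_1$ or a $\rho_{123}$ --- are governed by $\sgn(2\tau-m)$. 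In doing this I would record, for every generator, its idempotent and the shift it induces on the Alexander grading of $\ltilde K$; the constant $(m-1)/2$ in the statement is exactly the normalisation relating the relative $\spin$ gradings that come out of the bordered pairing to the intrinsic grading $A$ of \eqref{defAlex}.

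For the remaining steps, $\widehat{CFA}(V)$ is small --- few generators, a short list of $A_\infty$ operations --- so $\widehat{CFA}(V)\boxtimes\CFD(X_K,m)$ is an explicit filtered complex whose homology I can compute directly. Taking homology and sorting by $A$ yields exactly the classes $d_{i,j}$, $d_{i,j}^*$ (with $1\le j\le\delta(i)$), arising from the vertical --- and mirror-horizontal --- chain of the $i$-th arrow, together with the classes $u_\ell$ (with $1\le\ell\le|2\tau-m|$) arising from the unstable chain; the Alexander gradings $A(d_{i,j})=A(\eta_{2i})-(j-1)-(m-1)/2$ and $A(u_\ell)=\tau-(\ell-1)-(m-1)/2$ then fall out by tracking, generator by generator along each chain, the change in the multiplicity at the two basepoints of $V$. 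The symmetry $A(d_{i,j})=-A(d_{i,j}^*)$ is in any case forced a posteriori by the conjugation symmetry of knot Floer homology.

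The hard part --- and where I expect the real obstacle to be --- is the grading bookkeeping in the first and third steps. Bordered invariants carry gradings valued in a non-commutative set, and extracting from the pairing theorem the single $\Q$-valued grading $A$ of \eqref{defAlex} forces one to pin down a single reference generator (compute its relative $\spin$ structure, hence its $A$, by hand) and then propagate along every chain, carrying the shift $(m-1)/2$ and the signs along. One must also check that the orientation of $\ltilde K$ for which the formulas hold is the one fixed earlier by the $+1$-intersection convention, and that passing from $S^3_m(K)$ to $-S^3_m(K)$ is implemented with the correct conventions on the bordered side. A last, routine, point: one should verify that $\mathcal{B}$ --- or a stably equivalent basis obtained by adding acyclic summands --- can be taken simultaneously vertically and horizontally simplified, so that \cite[Section~11.5]{LOT} applies verbatim, and that this replacement leaves the homology unchanged.
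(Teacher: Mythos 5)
Your outline is correct in substance, but it follows the bordered--Floer route of the cited source \cite{LOT} rather than the argument the paper itself supplies. The paper attributes the statement to \cite{LOT} and then justifies it, in the range it actually uses ($m\ll 0$), by a direct large--surgery model in the spirit of Hedden \cite{He}: it forms $\ltilde{C}=\bigoplus_{i=1}^{m'}C_i$ from $m'=-m$ copies of $\CFK(S^3,K)$ with the shifted grading $\ltilde{A}$, stacks them in a staircase, writes down the differential $\ltilde{\de}$ explicitly on the top and bottom of the staircase in terms of the basis $\mathcal{B}$, takes homology, and simply declares $d_{i,j}=[(\eta^{top}_{2j})_i]$, $d_{i,j}^*=[(\eta^{top}_{2j})_{m'-i}]$ and $u_\ell=[(\eta^{top}_0)_i]$, reading the stated Alexander degrees off $\ltilde{A}$. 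What that buys is a chain-level picture in which the stabilisation maps of Theorem \ref{stabmaps} are the obvious inclusions $s_\pm$ missing one tile, which is the real point of the basis chosen here (see the remark after Theorem \ref{Heddenthm}); what your route buys is generality (all $m$, both signs of $2\tau-m$, with the unstable chain of length $|2\tau-m|$), at the cost of the grading bookkeeping you describe and of leaving the compatibility of the resulting basis with the gluing maps implicit. One correction to your final paragraph: do not make the argument depend on producing a basis that is simultaneously vertically and horizontally simplified --- this is not known to be possible in general and is not needed. The computation of $\CFD$ of the framed complement in \cite[Section 11.5]{LOT} uses a vertically simplified basis and a possibly different horizontally simplified basis, joined by the unstable chain at the two distinguished generators; the paper's basis $\mathcal{B}$ records only the vertical data, and the horizontal/mirror side enters through the symmetric bottom half of the staircase, i.e.\ through your $d_{i,j}^*$'s. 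With that adjustment, your plan is a faithful sketch of the proof in the cited reference, though not of the paper's own exposition.
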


Generators with a $*$ are to be thought of as symmetric to the generators without it, and each family $\{d_{i,j}\}_j$ can be interpreted as representing the arrow $\eta_{2i-1}\stackrel{\de}{\mapsto} \eta_{2i}$ (notice that $i$ varies among \emph{positive} integers), counted with a multiplicity equalling its length (\emph{i.e.} the distance it covers in Alexander grading).

\begin{rmk}
Not any basis of $\HFK(-S^3_m(K),\ltilde{K})$ with the same degree properties works for our purposes: we're actually choosing a basis that's compatible with stabilisation maps, as we're going to see in Theorem \ref{stabmaps}.
\end{rmk}

\begin{defn}\label{defn3.3}
Call $S_+$ the subspace of $\HFK(-S^3_{m}(K),\ltilde{K})$ generated by $\{d_{i,j}\}$, and $S_-$ the one generated by $\{d_{i,j}^*\}$: the subspace $S=S_+ \oplus S_-$ is the \deff{stable complex}, and elements of $S$ are called \deff{stable elements}. The subspace spanned by $\{u_\ell\}$ is called the \deff{unstable complex} and will be denoted with $U_m$ (although the subscript will often be dropped), so that $\HFK(-S^3_{m}(K),\ltilde{K})$ decomposes as $S_+\oplus U_m \oplus S_-$.
\end{defn}

It's worth remarking that the decomposition given in the definition above \emph{does} depend on our choice of the basis: the three stable subspaces $S_\pm$ and $S$ are independent on this choice, but the unstable complex isn't; see also Remark \ref{torsion_rmk} below.

There's a good and handy pictorial description when $|m|$ is sufficiently large; we'll be mostly dealing with negative values of $m$, so let's call $m'=-m\gg 0$. Consider a direct sum $\ltilde{C} = \bigoplus_{i=1}^{m'} C_i$ of $m'$ copies of $C=\CFK(S^3,K)$, and (temporarily) denote by $\x_i$ the copy of the element $\x\in C$ in $C_i$. Endow $\ltilde{C}$ with a shifted Alexander grading:
\[
\ltilde{A}(\x_i) = \left\{\begin{array}{ll}
A(\x)-(i-1)-(m-1)/2 & \text{for } i\le m'/2\\
-A(\x)-(i-1)-(m-1)/2 & \text{for } i>m'/2
\end{array}\right.
\]
for each homogeneous $\x$ in $\CFK(S^3,K)$. We picture this situation by considering each copy of $C$ as a vertical tile of $2g(K)+1$ boxes -- each corresponding to a value for the Alexander grading, possibly containing no generators at all, or more than one generator -- and stacking the $m'$ copies of $C$ in staircase fashion, with $C_1$ as the top block and $C_{m'}$ as the bottom block. Notice that, by our grading convention, the copies in the bottom part of the picture are turned upside down: for example, if $\x^{\rm max}\in C$ has maximal Alexander degree $A(\x) = g(K)$, then $\x^{\rm max}_1$ lies in the top box of $C_1$, while $\x^{\rm max}_{m'}$ lies in the bottom box of $C_{m'}$. Likewise, an element $\x^\tau\in C$ has Alexander degree $A(\x)=\tau$, then $\x^\tau_1$ lies in the $(g(K)-\tau+1)$-th box from the top in $C_1$, and $\x_{m'}^\tau$ lies in the $(g(K)-\tau+1)$-th box from the bottom in $C_{m'}$.

Our construction is a variant of Hedden's construction: while in general our chain complex for $\HFK(S^3_m(K),\ltilde{K})$ differs in from his complex in the region with intermediate Alexander grading, the resulting homologies nevertheless agree.

The situation is depicted in Figure \ref{Heddenfigure}: in this concrete example we have $g(K)=2$ and $\tau(K)=-1$; accordingly, there are $2g(K)+1=5$ boxes in each vertical column and $\x^\tau_1$ lies in the fourth box from the top in $C_1$.

\begin{figure}
\begin{center}
\includegraphics[scale=0.6]{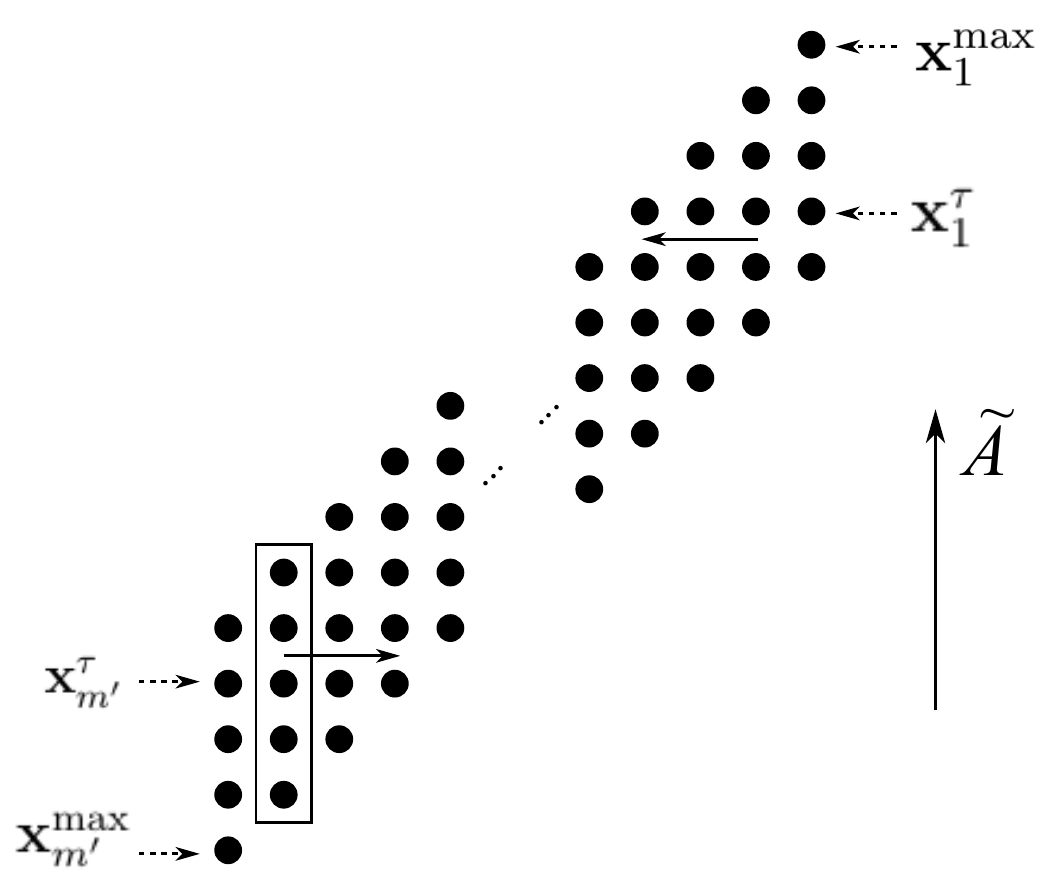}
\end{center}
\caption{We represent here the top (on the right) and bottom (on the left) parts of $\HFK(S^3_m(K),\ltilde{K})$ for  $m\ll 0$. Each vertical tile is a copy of $\CFK(S^3,K)$, and the arrows show the direction of the differentials.}\label{Heddenfigure}
\end{figure}

Now define a differential $\ltilde{\de}$ on $\ltilde{C}$ in the following way:
\[\ltilde{\de}:\left\{\begin{array}{ll}
 (\eta_0)_i \mapsto 0 & \text{for small and large } i\\
 (\eta_{2j-1})_i \mapsto (\eta_{2j})_{i+\delta(j)}\mapsto 0 & \text{for small } i\\
 (\eta_{2j-1})_i \mapsto (\eta_{2j})_{i-\delta(j)}\mapsto 0 & \text{for large } i\\
 (\eta'_{2j-1})_i \mapsto (\eta'_{2j})_i\mapsto 0 & \text{for every } i\\
  \end{array}\right..
\]
We extend the differential to be any map $\ltilde{\de}$ such that the level $\{A = j\}$ is a subcomplex for every $j$, whose homology is $\F$ for intermediate values of $j$ (this is possible since $\{A = j\}$ has odd rank for every intermediate value of $j$).

We're now going to analyse what happens on the top and bottom part of the complex (\emph{i.e.} when $i$ is small or large, in what follows), when we take the homology.

Pairs $(\eta'_{2j-1})_i, (\eta'_{2j})_i$ cancel out in homology. The element $(\eta_{2j})_i$ is a cycle for each $i,j$, and it's a boundary only when $j>0$ and either $i>\delta(j)$ or $i< m'-\delta(j)$: so there are $2\delta(j)$ surviving copies of $\eta_{2j}$, in degrees $A(\eta_{2j})-k-(m-1)/2$ and $-A(\eta_{2j})+k+(m-1)/2$ for $k=0,\dots,\delta(j)-1$. We can declare $d_{i,j} = [(\eta^{top}_{2j})_i]$ and $d_{i,j}^* = [(\eta^{top}_{2j})_{m'-i}]$.

The element $(\eta_0)_i$ is a cycle for every $i$, and it's never canceled out, so it survives when taking homology. Given our grading convention, for \emph{small} values of $i$, $\ltilde{A}((\eta_0)_i) = A(\eta_0)-(i-1)-(m-1)/2 = \tau(K)-(i-1)-(m-1)/2$, and in particular we have a nonvanishing class $[(\eta^{top}_0)_i] = u_i$ in degrees $\tau(K)-(m-1)/2, \tau(K)-(m-1)/2-1, \dots$ On the other hand, when $i$ is \emph{large}, $[(\eta_0)_i]$ lies in degree $-\tau(K)-(i-1)-(m-1)/2$, and we get a nonvanishing class $[(\eta^{top}_0)_i] = u_{2\tau(K)+i+(m-1)/2}$ in degrees $-\tau(K)+(m-1)/2, -\tau(K)+(m-1)/2+1, \dots$

We also have a string of $\F$ summands in between, giving us a strip of unstable elements of length $2\tau(K)-m$, as in Theorem \ref{Heddenthm}.


\subsection{Stabilisation maps}\label{substabmaps}

We're going to study the action of the two stabilisation maps $\sigma_\pm$ of Definition \ref{stabmaps_def} on the sutured Floer homology groups $SFH(-S^3_L)$. It's worth stressing that these maps do not depend on the particular Legendrian representative, but only on its Thurston-Bennequin number: in fact, the topological type of $L$ determines the complement $S^3\setminus \nu(L)$ and $tb(L)$ determines the sutures on $\de\nu(L)$, hence the sutured manifold $S^3_L$ depends only on these data. A gluing map $\Phi_\xi: SFH(M,\Gamma) \to SFH(M',\Gamma')$ only depends on the contact structure $\xi$ on the layer and not on the contact structure on $(M,\Gamma)$ (in fact, no such contact structure is required in the definition of $\Phi_\xi$).

Notice that if $L$ is a Legendrian knot in $S^3$ with $tb(L)=n$, then, as a sutured manifold, $S^3_L$ is just $S^3_{K,n}$. Moreover, if $L'$ is a stabilisation of $L$, then $S^3_{L'}$ is isomorphic to $S^3_{K,n-1}$ as a sutured manifold.

Recall that we have two families (indexed by the integer $n$) of stabilisation maps, $\sigma_\pm: SFH(-S^3_{K,n})\to SFH(-S^3_{K,n-1})$, corresponding to the gluing of the negative and positive stabilisation layer: if the knot $K$ is oriented, these maps can be labelled as $\sigma_-$ or $\sigma_+$. With a slight abuse of notation, we're going to ignore the dependence of these maps on the framing.

\begin{rmk}\label{orientation_rmk}
Notice that orientation reversal of $L$ or $K$ isn't seen by the sutured groups nor by $EH(L)$, but it swaps the r\^oles of $\sigma_-$ and $\sigma_+$.
\end{rmk}

\begin{rmk}
Let's recall that for an \emph{oriented} Legendrian knot $L$ of topological type $K$ in $S^3$ the Bennequin inequality holds:
\[tb(L)+r(L) \le 2g(K)-1.\]
In \cite{Pl}, Plamenevskaya proved a sharper result:
\begin{equation}\label{olga}
tb(L)+r(L) \le 2\tau(K)-1.
\end{equation}

This last form of the Bennequin inequality, together with Theorem \ref{Heddenthm}, tells us that, whenever we're considering knots in the standard $S^3$, the unstable complex is never trivial in $SFH(-S^3_{K,n})$: more precisely we're always (strictly) below the threshold $2\tau:=2\tau(K)$, so that $2\tau-m$ is always positive; in particular, the dimension of the unstable complex is always positive and \emph{increases} under stabilisations. We'll state the theorem in its full generality anyway, even though this remark tells us we need just half of it when working in $(S^3,\xi_{\rm st})$.
\end{rmk}

The following theorem is proved in \cite[Section 3.4]{me}.

\begin{thm}\label{stabmaps}
The maps $\sigma_-, \sigma_+: SFH(-S^3_{K,n})\to SFH(-S^3_{K,n-1})$ act as follows:
\[\begin{array}{lcll}\sigma_-:\left\{\begin{array}{l}
d_{i,j}\mapsto d_{i,j}\\
u_\ell\mapsto u_\ell\\
d^*_{i,j}\mapsto d^*_{i,j+1}
  \end{array}\right. , & &
\sigma_+:\left\{\begin{array}{l}
d_{i,j}\mapsto d_{i,j+1}\\
u_\ell\mapsto u_{\ell+1}\\
d^*_{i,j}\mapsto d^*_{i,j}
  \end{array}\right. & {\rm for }\; n\le 2\tau;\\
  \\
\sigma_-:\left\{\begin{array}{l}
d_{i,j}\mapsto d_{i,j}\\
u_\ell\mapsto u_\ell\\
u_{n-2\tau}\mapsto 0 \\
d^*_{i,j}\mapsto d^*_{i,j+1}
  \end{array}\right. , & &
\sigma_+:\left\{\begin{array}{l}
d_{i,j}\mapsto d_{i,j+1}\\
u_\ell\mapsto u_{\ell-1}\\
u_1\mapsto 0\\
d^*_{i,j}\mapsto d^*_{i,j}
  \end{array}\right. & {\rm for }\; n> 2\tau.

\end{array}
\]
\end{thm}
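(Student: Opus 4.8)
The plan is to identify the gluing map of a single stabilisation basic slice at the chain level and then read off its effect on the homology bases of Theorem~\ref{Heddenthm}. Since $\sigma_\pm$ depends only on $tb(L)=n$, and since stacking basic slices composes the associated gluing maps by Theorem~\ref{associativityphi}, it suffices to treat one layer at a time; this also lets us \emph{build} the basis of the target out of a chosen basis of the source, which is what ``compatible with stabilisation maps'' should mean (recall the remark after Theorem~\ref{Heddenthm} that not every basis with the right gradings will do).

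First I would fix a small partial open book $(S,P,h)$ for the stabilisation basic slice $(T^2\times I,\xi_\pm)$, taking $P$ to be a single $1$-handle so that its sutured Heegaard diagram contributes exactly one pair $\alpha,\beta$ and one generator. Gluing this piece to a diagram for $S^3_{K,n}$ yields a diagram for $S^3_{K,n-1}$ together with a Heegaard triple computing $\Phi_{\xi_\pm}$; because the new piece carries a single handle, the relevant holomorphic triangle counts are local — exactly as in the analysis of a stabilisation move — so that $\Phi_{\xi_\pm}$ is identified with an explicit chain map $\widetilde\sigma_\pm$. Its effect on Alexander gradings is dictated by how the basic slice modifies the relative $\spin$ data, and this is where the ``$j\mapsto j$ versus $j\mapsto j+1$'' dichotomy enters (morally the difference between the identity and multiplication by $U$, which drops the Alexander grading by one).

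Next I would pass to the staircase model of Subsection~\ref{modules}. For $n\ll 0$ the complex $\widetilde C=\bigoplus_{i=1}^{m'}C_i$ is available, and I expect $\widetilde\sigma_\pm$ to be, up to chain homotopy, the reindexing of the staircase by one block ($C_i\mapsto C_i$ or $C_i\mapsto C_{i+1}$) plus a correction supported in the middle, $\tau$-dominated region. Taking homology then gives the $n\le 2\tau$ formulas directly: the stable classes $d_{i,j},d_{i,j}^*$ shift by $\pm 1$ (or stay fixed) in the length index, and on the unstable strand $[(\eta^{top}_0)_i]$ one either appends a new class at the extreme Alexander grading or shifts the existing ones — producing simultaneously the compatible basis of $SFH(-S^3_{K,n-1})$. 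For general $n$, in particular at the threshold $n=2\tau$ and for $n>2\tau$, I would either rerun the same local computation — the only new phenomenon being that the generator born under one layer is the one killed by the opposite layer, forcing $u_{n-2\tau}\mapsto 0$, resp.\ $u_1\mapsto 0$ — or invoke the bypass exact triangle
\[
SFH(-S^3_{K,n})\xrightarrow{\ \sigma\ }SFH(-S^3_{K,n-1})\longrightarrow \HFK(S^3,K)\longrightarrow
\]
coming from the Farey triangle of slopes $n$, $n-1$, $\infty$, and pin down $\sigma$ from exactness together with the gradings already computed.

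The main obstacle is the chain-level computation: making the description of $\Phi_{\xi_\pm}$ sharp enough to detect the unit shift in the length index and the birth/death of an unstable generator, while at the same time arranging the source and target bases so that the formulas hold on the nose rather than up to base change. The genuinely delicate point is the behaviour near $n=2\tau$, where the pictorial staircase degenerates and where $\sigma_+$ and $\sigma_-$ first differ: there one must check carefully that the unstable class annihilated by $\sigma_+$ (resp.\ $\sigma_-$) is the extreme one $u_1$ (resp.\ $u_{n-2\tau}$) and not some other combination — equivalently, that the chosen basis stays compatible across the threshold.
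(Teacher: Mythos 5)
A preliminary remark: this paper does not contain a proof of Theorem \ref{stabmaps} at all --- it is quoted from \cite[Section 3.4]{me}, and the only internal material is the a posteriori interpretation of $\sigma_\pm$ as the maps induced in homology by the inclusions $s_\pm$ of the staircase model, stated only for $n\ll 0$. So your proposal can only be judged on its own viability, and there I see two genuine gaps. The first is the chain-level identification of the Honda--Kazez--Mati\'c gluing map of a stabilisation basic slice with a ``local'' triangle count. This is not a routine step: in this paper the analogous identification (for bypass attachments, which is the same attachment) is only available through Theorem \ref{Jakebypassprop}, quoted from work in preparation, and even granting it one must still construct the triple diagram, locate the cycle $\bfTheta$, and rule out nonlocal holomorphic triangles --- compare Lemmas \ref{positive_is} and \ref{xzero}, which need positivity of domains and the placement of the boundary disc to do precisely this in a carefully chosen diagram. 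Your sketch takes the locality for granted because ``the new piece carries a single handle'', but index-zero triangles have no a priori reason to stay in that handle, and the correction ``supported in the middle region'' of the staircase is exactly the part you would have to control.

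The second gap is the passage from $n\ll 0$ to all $n$. The staircase model of Subsection \ref{modules} exists only for large negative framings, and Theorem \ref{associativityphi} only tells you that composites of the unknown single-layer maps agree with maps you can compute far down the tower; it does not determine the individual layers at intermediate framings, in particular not near or above $2\tau$, which is where the two halves of the statement differ. Your fallback --- a bypass exact triangle with third term $\HFK(S^3,K)$ for the Farey triangle $\{n,n-1,\infty\}$, combined with the grading of Corollary \ref{sigma-grading} --- is too weak even if one grants the triangle (which itself needs a proof or a citation): exactness pins down dimensions of kernels and images grading by grading, but the theorem asserts more, namely that one basis as in Theorem \ref{Heddenthm} can be chosen compatibly with \emph{both} $\sigma_+$ and $\sigma_-$ at every framing simultaneously (this is exactly the content of the remark following Theorem \ref{Heddenthm}), and that the class killed for $n>2\tau$ is the extreme unstable generator $u_1$, resp.\ $u_{n-2\tau}$, rather than a combination involving stable classes; in gradings where the unstable and stable summands overlap, rank counting cannot separate them. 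You correctly flag this threshold behaviour as the delicate point, but flagging it is not resolving it, so as it stands the proposal is a sensible plan, close in spirit to the $s_\pm$ picture the paper records, but not yet a proof.
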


Notice that we're implicitly choosing an appropriate isomorphism between the group $SFH(-S^3_{K,n})$ and the vector space generated by the $d_{i,j}$'s and the $u_i$'s (see Theorem \ref{Heddenthm}).

There's an interpretation of the maps $\sigma_{\pm}: SFH(-S^3_{K,n})\to SFH(-S^3_{K,n-1})$ in terms of Figure \ref{Heddenfigure}, when $n\ll 0$: fix a chain complex $C$ computing $\HFK(S^3,K)$ and call $(\ltilde{C}_n, \ltilde\de)$ and $(\ltilde{C}_{n-1}, \ltilde\de)$ the two complexes defined in the previous section, computing $SFH(-S^3_{K,n})$ and $SFH(-S^3_{K,n-1})$ starting from $C$. We have two ``obvious'' chain maps $s_\pm: \ltilde{C}_n\to \ltilde{C}_{n-1}$: $s_-$ sends $\x_i\in \ltilde{C}_n$ to $\x_i\in \ltilde{C}_{n-1}$, while $s_+$ sends $\x_i\in \ltilde{C}_n$ to $\x_{i+1}\in \ltilde{C}_{n-1}$. The maps $s_\pm$ induce the two stabilisation maps $\sigma_\pm$ at the homology level.

$s_-$ is the inclusion $\ltilde{C}_{n}\hookrightarrow \ltilde{C}_{n-1}$ that misses the leftmost vertical tile (that is, the copy $C_{1-n}$ of $C$ that's in lowest Alexander degree), while $s_+$ is the inclusion that misses the rightmost vertical tile (the copy $C_1$ of $C$ that lies in highest Alexander degree).

As a corollary (of the proof), we obtain a graded version of the result:

\begin{cor}\label{sigma-grading}
The maps $\sigma_\pm$ are Alexander-homogeneous of degree $\mp \sfrac12$.
\end{cor}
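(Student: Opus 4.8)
The plan is to deduce the statement directly from Theorems~\ref{Heddenthm} and~\ref{stabmaps}. By Theorem~\ref{stabmaps} the maps $\sigma_\pm$ carry the distinguished basis of $SFH(-S^3_{K,n})$, up to reindexing, onto a subset of the distinguished basis of $SFH(-S^3_{K,n-1})$, so it is enough to compare the Alexander gradings of corresponding basis vectors (vectors sent to $0$ impose no condition). The only point to keep straight is that the integer $m$ appearing in the grading formulas of Theorem~\ref{Heddenthm} equals $n$ for the source group and $n-1$ for the target group.

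First I would record the effect common to all basis vectors: passing from $m=n$ to $m=n-1$ increases the summand $-(m-1)/2$ occurring in $A(d_{i,j})$, $A(d^*_{i,j})$ and $A(u_\ell)$ by exactly $\tfrac12$. Then I would run through the cases of Theorem~\ref{stabmaps} for $n\le 2\tau$. For $\sigma_-$ the indices of the $d_{i,j}$ and $u_\ell$ are unchanged while $d^*_{i,j}\mapsto d^*_{i,j+1}$; since $A(d_{i,j})$ and $A(u_\ell)$ contain $-(j-1)$, resp.\ $-(\ell-1)$, and $A(d^*_{i,j})=-A(d_{i,j})$ contains $+(j-1)$, the reindexing contributes $0$ on the $d$'s and $u$'s and $+1$ on the $d^*$'s; adding the universal $+\tfrac12$ gives a net change of $+\tfrac12$ on every basis vector, so $\sigma_-$ is Alexander-homogeneous of degree $+\tfrac12$. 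For $\sigma_+$ one has $d_{i,j}\mapsto d_{i,j+1}$, $u_\ell\mapsto u_{\ell+1}$, and $d^*_{i,j}$ fixed, so the reindexing contributes $-1$ on the $d$'s and $u$'s and $0$ on the $d^*$'s, and the net change is $-\tfrac12$ throughout; hence $\sigma_+$ is Alexander-homogeneous of degree $-\tfrac12$.

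It remains to treat $n>2\tau$. Here the $d$- and $d^*$-parts of $\sigma_\pm$ are literally the same, so the computation above applies to them verbatim; the one place needing a word of care is the unstable complex, where one must use the labelling of the $u_\ell$ furnished by the compatible basis of Theorem~\ref{stabmaps} (equivalently, by the construction behind it, see \cite[Section 3.4]{me}) rather than a bare multiset of degrees, after which the change is again $\pm\tfrac12$. I expect this convention-matching --- fixing the labelling of the $u_\ell$'s and the sign conventions for $A$ on the $d^*$'s so that they are consistent between the two groups --- to be the only delicate point; the arithmetic itself is a one-line check in each case. In any event, for Legendrian knots in $(S^3,\xi_{\rm st})$ one always has $n<2\tau$, so the first case already covers every situation relevant to this paper.
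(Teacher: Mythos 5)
Your strategy --- reading the degree shift off from the explicit bases and grading formulas of Theorems~\ref{Heddenthm} and~\ref{stabmaps} --- is a legitimate homology-level alternative to the paper's route, which gets the corollary ``from the proof'': $\sigma_\pm$ are induced by the chain maps $s_\pm\colon \ltilde{C}_n\to\ltilde{C}_{n-1}$ ($s_-$ keeping the tile index $i$, $s_+$ shifting it by one), and the formula for $\ltilde{A}(\x_i)$ makes the degrees $\mp\tfrac12$ immediate at the chain level, for every generator at once. For $n\le 2\tau$ your check does work, but your bookkeeping has a sign slip: since $A(d^*_{i,j})=-A(d_{i,j})$, the term occurring in $A(d^*_{i,j})$ is $+(m-1)/2$, so the ``universal'' contribution of replacing $m=n$ by $m=n-1$ is $-\tfrac12$ on the $d^*$'s, not $+\tfrac12$; combined with the $+1$ coming from $d^*_{i,j}\mapsto d^*_{i,j+1}$ this gives the correct $+\tfrac12$, whereas your own intermediate numbers ($+1$ plus $+\tfrac12$) would add up to $+\tfrac32$. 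The conclusion you assert is right, but the arithmetic as written is internally inconsistent and needs this correction.

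The genuine gap is the case $n>2\tau$. There the corollary cannot be extracted from the two quoted statements alone: taking $A(u_\ell)=\tau-(\ell-1)-(m-1)/2$ at face value together with $\sigma_+\colon u_\ell\mapsto u_{\ell-1}$ gives a shift of $+\tfrac32$ on the unstable part, contradicting the claim; so the labelling of the $u_\ell$'s in that regime is exactly the issue, and your appeal to ``the labelling furnished by the compatible basis'' amounts to assuming the graded statement rather than proving it. Fixing it requires going back to the construction of the bases and of the maps, i.e.\ to the chain-level argument of \cite[Section 3.4]{me} (or the $s_\pm$ description), which is what the paper itself invokes. Nor can the case be discarded: the paper works with arbitrary contact structures on $S^3$ (non-loose unknots have $tb\ge 2\tau$) and the direct system $\A_-$ is indexed by all framings, so the degree statement is used above the threshold $2\tau$ as well. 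With the sign corrected and the $n>2\tau$ case referred to the chain-level proof, your argument becomes a valid, if slightly less self-contained, variant of the paper's.
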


\begin{rmk}\label{torsion_rmk}
Notice that the maps $\sigma_-$ preserve $S_+$ and eventually kill $S_-$, whereas the maps $\sigma_+$ have the opposite behaviour. Moreover, $\sigma_-$ and $\sigma_+$ are injective on the unstable complex for $n\le2\tau$, while they eventually kill it for $n>2\tau$.

Namely, for $n \le 2\tau$, the subcomplex $S_\pm = \bigcup_{m>0}\ker \sigma_\pm^m = \ker \sigma_{\pm}^N$ for some large $N$ (depending on $K$, but not on the slope $n$: any $N>2g(K)$ works), do not depend on the basis we've chosen. For $n<2\tau$, though, the unstable subspace \emph{does} depend on this choice: this reflects the fact that it is a section for the projection map $SFH(-S^3_{K,n})\to SFH(-S^3_{K,n})/(S_++S_-)$.

On the other hand, for $m>2\tau$ the situation is reversed: the unstable complex is the intersection of the kernels of $\sigma_\pm^N$, and $S_\pm$ is a section of the projection map $\ker \sigma_{\mp}^N \to (\ker \sigma_{\mp}^N)/(\ker \sigma_-^N\cap\ker \sigma_+^N)$.
\end{rmk}

The action of $\sigma_\pm$ on the unstable complex is just by degree shift, as in Theorem \ref{stabmaps}.

\section{An apparently new Legendrian invariant}\label{newLeg}

\subsection{Some remarks on $EH(L)$}

Given an oriented Legendrian knot $L$, we define $L^{m,n}$ to be the Legendrian knot obtained from $L$ via $m$ negative and $n$ positive stabilisations.

The main character of the subsection will be an \emph{unoriented} Legendrian knot $L$ in the 3-sphere $S^3$, equipped with some contact structure $\xi$.

\begin{prop}\label{EHntoEH}
If $tb(L)\le 2\tau(K)$, the pair $\{EH(L^{0,n}),EH(L^{n,0})\}$ determines $EH(L)$.
\end{prop}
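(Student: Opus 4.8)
The idea is to recover $EH(L)$ from $EH(L^{0,n})$ and $EH(L^{n,0})$ by "undoing" stabilisations via the stabilisation maps $\sigma_\pm$. First I would record what we already have: the knot $K$ is oriented (say, by picking an orientation on $L$), and $L^{0,n}=L^{n,0}$ as \emph{unoriented} Legendrian knots only up to the labelling swap from Remark \ref{orientation_rmk}; with the chosen orientation, $EH(L^{0,n})$ and $EH(L^{n,0})$ are the images of $EH(L)$ under $\sigma_+^n$ and $\sigma_-^n$ respectively (by Theorem \ref{EHmap}, since each single stabilisation layer is glued on and $\Phi_\xi$ sends $EH$ to $EH$, together with Theorem \ref{associativityphi} for the composition). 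So the statement to prove is really: the element $EH(L)\in SFH(-S^3_{K,t})$ (with $t=tb(L)\le 2\tau$) is determined by the pair $(\sigma_+^n EH(L),\, \sigma_-^n EH(L))$, \emph{given that we know both target groups and both maps}.

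The key step is a purely algebraic one about the maps of Theorem \ref{stabmaps} in the range $n\le 2\tau$. Write $x = EH(L) = \sum a_{i,j} d_{i,j} + \sum b_\ell u_\ell + \sum c_{i,j} d^*_{i,j} \in SFH(-S^3_{K,t})$ in the distinguished basis. From Theorem \ref{stabmaps}, $\sigma_-$ fixes each $d_{i,j}$ and each $u_\ell$ and sends $d^*_{i,j}\mapsto d^*_{i,j+1}$; hence $\sigma_-^n x = \sum a_{i,j} d_{i,j} + \sum b_\ell u_\ell + \sum c_{i,j} d^*_{i,j+n}$, and since $j$ ranges over $1\le j\le \delta(i)$ and $\delta(i)\le g(K)$, for $n > 2g(K)$ (in particular for all $n$ large) the $d^*$-part is killed entirely. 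Thus $\sigma_-^n x$ records exactly the $S_+$- and $U$-coordinates $a_{i,j}, b_\ell$ of $x$, and knowing the map $\sigma_-^n$ lets us read them off. Symmetrically, $\sigma_+$ fixes $d^*_{i,j}$ and $u_\ell$ and pushes $d_{i,j}\mapsto d_{i,j+1}$, so $\sigma_+^n x$ records the $S_-$-coordinates $c_{i,j}$ and (again) the $U$-coordinates $b_\ell$. Hence the pair $(\sigma_-^n x, \sigma_+^n x)$ determines all of $a_{i,j}, b_\ell, c_{i,j}$, i.e. determines $x$. (One must note that $\sigma_\pm$ are injective on the unstable complex for $n\le 2\tau$ — Remark \ref{torsion_rmk} — so the $U$-coordinates recovered from the two sides are genuinely the same data and there is no inconsistency; and the Alexander grading, by Corollary \ref{sigma-grading}, lets one match up which basis element in the target came from which.)

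The only subtlety — and the place to be careful rather than a deep obstacle — is that the decomposition $SFH(-S^3_{K,t}) = S_+\oplus U \oplus S_-$ depends on the choice of basis $\mathcal B$, while $EH(L)$ does not; but the \emph{maps} $\sigma_\pm^n$ and the target groups are intrinsic, so the argument above, phrased as "$x$ is the unique element with prescribed images under the two known maps," is basis-independent. Concretely: $\sigma_-^n$ restricted to $S_+\oplus U$ is injective for $n\le 2\tau$ and $\sigma_+^n$ restricted to $S_-\oplus U$ is injective, so $(\sigma_-^n\oplus \sigma_+^n)$ has kernel contained in $(S_-)\cap(S_+)=0$; thus $\sigma_-^n\oplus\sigma_+^n$ is injective on all of $SFH(-S^3_{K,t})$ when $n>2g(K)$, and $EH(L)$ is the unique preimage of $(EH(L^{n,0}), EH(L^{0,n}))$. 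That injectivity is the heart of the proof; everything else is bookkeeping with the explicit formulas of Theorem \ref{stabmaps} and the grading statement of Corollary \ref{sigma-grading}.
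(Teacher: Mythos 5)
Your core algebraic argument is sound and it takes a genuinely different route from the paper. The paper recovers the stable part of $EH(L)$ from the pair much as you do, but for the unstable component it invokes Proposition \ref{stablevsOT} (overtwisted if and only if $EH(L)$ is stable, proved via the contact-geometric Lemma \ref{stablevspsiinfty} on $\psi_\infty$ and Eliashberg's classification) together with Alexander-homogeneity of $EH(L)$ and the degree count of Corollary \ref{sigma-grading}: the degree of the unstable component is the average of the degrees of the two elements of the pair, and this pins it down since the unstable complex is one-dimensional in each degree. You replace all of this by the purely algebraic observation that, by Theorem \ref{stabmaps} (whose first case applies at every step since the framings stay $\le 2\tau$), $\ker\sigma_-^n\subseteq S_-$ and $\ker\sigma_+^n\subseteq S_+$, so $(\sigma_-^n,\sigma_+^n)$ is injective (in fact for every $n\ge 1$, not just $n>2g(K)$), and your ``unique preimage'' phrasing correctly finesses the basis-dependence of the unstable summand noted in Remark \ref{torsion_rmk}. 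This is more elementary and avoids tightness input entirely; the paper's route has the side benefit of establishing Proposition \ref{stablevsOT}, which it reuses later.

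There is, however, one gap relative to the statement as the paper intends it. Here $L$ is \emph{unoriented}, and -- as the paper stresses immediately after the proposition -- only the \emph{unordered} pair $\{\sigma_-^n(EH(L)),\sigma_+^n(EH(L))\}$ is well-defined. Choosing an orientation, as you do, labels the two gluing maps as $\sigma_\pm$ (Remark \ref{orientation_rmk}), but it does not tell you which of the two given group elements is the image under which map; so ``$EH(L)$ is the unique preimage of the ordered pair'' is a reconstruction from more data than is given, and proves only the oriented/ordered version. The paper's proof is deliberately symmetric in the two elements (sum of stable parts, sum of Alexander degrees). Your approach can be completed with one more application of the same formulas: if both matchings admitted preimages, i.e. $\sigma_-^n x=\sigma_+^n x'$ and $\sigma_+^n x=\sigma_-^n x'$, then projecting onto the $d_{i,j}$-, $u_\ell$- and $d^*_{i,j}$-coordinates and using that the shifts are nilpotent on $S_\pm$ and strictly index-increasing on the unstable part forces $x=x'=0$; hence the element recovered is independent of the guessed matching, and the unordered pair does determine $EH(L)$. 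With that addendum your proof is complete.
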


Strictly speaking, since $L$ is not oriented, $EH(L^{0,n}), EH(L^{n,0})$ are not individually defined, but the pair $\{EH(L^{0,n}),EH(L^{n,0})\}$ is, as the unordered pair $\{\sigma_-^n(EH(L)), \sigma_+^n(EH(L))\}$ for either orientation of $L$.

\begin{proof}
Since $\sigma_-$ preserves $S_-$ and $\sigma_+$ preserves $S_+$, knowing the pair we know what the stable part of $EH(L)$ is.

Let's consider now the unstable component of $EH(L)$: since $EH(L)$ is represented by a single generator in the chain complex, it is Alexander-homogeneous; moreover, since the stable and unstable complexes are generated by homogeneous elements, both the stable and unstable components of $EH(L)$ are Alexander-homogeneous. We now state a proposition that will turn out to be useful later, and we will prove it below.

\begin{prop}\label{stablevsOT}
$\xi$ is overtwisted if and only if $EH(L)$ is stable.
\end{prop}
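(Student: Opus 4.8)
The plan is to reduce the statement to a non-vanishing criterion for $EH$ together with the structure of the unstable complex $U_n$ under stabilisation. First I would recall that $\xi$ is overtwisted if and only if the Ozsv\'ath--Szab\'o contact invariant $c(S^3,\xi)$ vanishes, equivalently $EH(S^3(1),\xi(1)) = 0$ by the proposition identifying the two. The key input is that a Legendrian knot $L$ sits inside $(S^3,\xi)$, so the gluing map of Theorem \ref{EHmap} applied to the complement of $\nu(L)$ inside $S^3(1)$ carries $EH(L)$ to a scalar multiple of $EH(S^3(1),\xi(1))$; more precisely, filling in $S^3_L$ with the remaining contact piece recovers $(S^3(1),\xi(1))$, so $\Phi_{\xi_{\mathrm{fill}}}(EH(L)) = EH(S^3(1),\xi(1))$ by the $EH$-naturality of gluing maps.

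For the ``only if'' direction, suppose $\xi$ is overtwisted. I would exploit that an overtwisted contact structure remains overtwisted after Legendrian stabilisation, so $L^{0,n}$ and $L^{n,0}$ still live in an overtwisted $(S^3,\xi)$ for every $n$; in particular, by the above gluing argument, the image of $EH(L)$ under $\sigma_\pm^n$ is swallowed by the vanishing of the ambient contact class once $n$ is large. Concretely, for $n > 2\tau(K) - tb(L)$ the stabilised Thurston--Bennequin number drops below $2\tau$, and Theorem \ref{stabmaps} together with Remark \ref{torsion_rmk} shows that $\sigma_-^N$ kills $S_-$ and $\sigma_+^N$ kills $S_+$, so the only part of $EH(L)$ that can survive infinitely many stabilisations of a fixed sign is its unstable component. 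But the unstable component is a single homogeneous generator $u_\ell$ (possibly zero), and if $EH(L)$ had a nonzero unstable part then, comparing with Corollary \ref{nonvanishing} applied to the Stein fillable standard structure, or rather directly with the vanishing forced by overtwistedness, one derives a contradiction: the unstable generator pairs nontrivially against the contact class of $S^3(1)$ and hence forces $c(S^3,\xi) \neq 0$. Therefore the unstable component of $EH(L)$ vanishes, i.e. $EH(L)$ is stable.

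For the ``if'' direction, suppose $EH(L)$ is stable, so its unstable component is zero. I would argue contrapositively: if $\xi$ were tight, then $(S^3,\xi) = (S^3,\xi_{\mathrm{st}})$ by Eliashberg's uniqueness theorem, and $S^3_L$ embeds in the Stein fillable $(S^3,\xi_{\mathrm{st}})$, so by Corollary \ref{nonvanishing} we have $EH(L) \neq 0$. More is true: in the standard tight $S^3$, Plamenevskaya's sharp Bennequin inequality \eqref{olga} guarantees $tb(L) + r(L) \le 2\tau(K) - 1 < 2\tau(K)$, so we are always in the regime where the unstable complex is nontrivial, and I would show that the non-vanishing $EH(L) \neq 0$ together with the way the stable summands $S_\pm$ are eventually killed by $\sigma_\mp$ forces the unstable component of $EH(L)$ to be precisely the nonzero class; equivalently, the image of $EH(L)$ in the quotient $SFH(-S^3_L)/(S_+ + S_-)$ is nonzero, which is exactly the statement that the unstable component does not vanish. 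This contradicts stability, so $\xi$ must be overtwisted.

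The main obstacle I anticipate is the ``if'' direction, specifically pinning down that in the tight case the unstable component of $EH(L)$ is genuinely nonzero rather than merely that $EH(L)$ as a whole is nonzero. The subtlety is that the unstable subspace $U_n$ is not canonically defined (it depends on the basis chosen in Theorem \ref{Heddenthm}), so ``the unstable component is nonzero'' must be phrased basis-independently as ``$EH(L) \notin S_+ + S_-$'', and one must verify that the gluing/stabilisation formalism respects this. I would handle this by tracking $EH(L)$ under a long string of stabilisations of \emph{both} signs: by Remark \ref{torsion_rmk}, $\bigcap_{m} \ker\sigma_+^m \cap \bigcap_m \ker\sigma_-^m$ is exactly the unstable complex for $n < 2\tau$, so $EH(L) \in S_+ + S_-$ would mean some $\sigma_\pm^N(EH(L)) = 0$; but each stabilisation of a tight structure on $S^3$ yields another knot in $(S^3,\xi_{\mathrm{st}})$, whose $EH$ is nonzero by Corollary \ref{nonvanishing} --- contradiction. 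This closes the argument without ever choosing a basis.
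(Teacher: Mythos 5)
Your ``if'' direction (stable $\Rightarrow$ overtwisted) is essentially sound and is even a bit more direct than the paper's route: arguing contrapositively, if $\xi$ is tight then it is standard, every iterated stabilisation $L^{N,N}$ is still Legendrian in the Stein fillable $(S^3,\xi_{\rm st})$, so $EH(L^{N,N})=(\sigma_-\circ\sigma_+)^N(EH(L))\neq 0$ by Corollary \ref{nonvanishing}, whereas a stable class is killed by $(\sigma_-\circ\sigma_+)^N$ for large $N$ --- the paper instead factors everything through the filling map $\psi_\infty$ and $c(\xi_{\rm st})\neq 0$. (Two small slips there: it is the composition $(\sigma_-\circ\sigma_+)^N$, not a single $\sigma_\pm^N$, that annihilates a general stable element, since $\sigma_-^N$ only kills $S_-$ and $\sigma_+^N$ only kills $S_+$; and your reading of Remark \ref{torsion_rmk} is reversed --- for $n\le 2\tau$ the intersection $\bigcap_m\ker\sigma_+^m\cap\bigcap_m\ker\sigma_-^m$ is zero, not the unstable complex; that description belongs to the regime $n>2\tau$.)

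The genuine gap is in the ``only if'' direction (overtwisted $\Rightarrow$ stable). Your argument reduces to the claim that a nonzero unstable component of $EH(L)$ ``pairs nontrivially against the contact class of $S^3(1)$ and hence forces $c(S^3,\xi)\neq 0$'', but no such pairing exists and this claim is exactly what has to be proved; note that Corollary \ref{nonvanishing} cannot be invoked here, since the ambient structure is overtwisted (and indeed $EH$ of non-loose knots can be nonzero, so ``the ambient class vanishes'' does not by itself kill anything). What is needed, and what the paper supplies in Lemma \ref{stablevspsiinfty}, is: (a) the identity $\psi_\infty\circ\sigma_\pm=\psi_\infty$ for the filling map $\psi_\infty$, which follows from Honda's classification of tight contact structures on solid tori (a stabilisation basic slice glued to the standard neighbourhood is again the standard neighbourhood) together with associativity of gluing maps (Theorem \ref{associativityphi}); this shows $\psi_\infty$ kills the stable part of $EH(L)$; and (b) a comparison argument showing $\psi_\infty$ does \emph{not} kill the unstable generator: after enough stabilisations the image of a non-stable homogeneous class lands in a one-dimensional Alexander-homogeneous summand in the middle of the complex (Theorem \ref{stabmaps}), where it must coincide with a stabilised $EH$ of a Legendrian representative of $K$ in $(S^3,\xi_{\rm st})$, whose $\psi_\infty$-image is $c(\xi_{\rm st})\neq 0$. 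Only with (a) and (b) can one conclude from $\psi_\infty(EH(L))=c(\xi)=0$ that the unstable component of $EH(L)$ vanishes. Your proposal never states (a) and only gestures at (b), so this half of the equivalence is not established as written.
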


Now, if $\xi$ is overtwisted, $EH(L)$ is stable, so we're done.

On the other hand, if $\xi=\xi_{\rm st}$, the unstable component of $EH(L)$ is nonvanishing, and -- when fixing either orientation -- has Alexander degree $2\tilde{A}(EH(L)) = \tilde{A}(EH(L^{n,0}))+\tilde{A}(EH(L^{0,n}))$, and this suffices to determine it.
\end{proof}

\begin{rmk}
Proposition \ref{stablevsOT} is a analogue to Theorem 1.2 in \cite{LOSS}, which tells us that $\L^-(L)$ is mapped to $c(\xi)$ by setting $U=1$ in the complex $HFK^-(-S^3,K)$. See also Proposition \ref{EH-prop} below.
\end{rmk}

\begin{proof}[Proof of Proposition \ref{stablevsOT}]
We're first going to prove that if $EH(L)$ is stable, $\xi$ is overtwisted, via the following lemma (which will turn out to be useful also later). Let $\psi_\infty$ denote the gluing map associated to the gluing of the standard neighbourhood of a Legendrian knot (\emph{i.e.} the difference $\T_\infty = Y(1)\setminus {\rm Int}(Y_L)$).

\begin{lemma}\label{stablevspsiinfty}
A homogeneous element $x\in SFH(-S^3_{K,n})$ is stable if and only if $\psi_\infty(x)=0$.
\end{lemma}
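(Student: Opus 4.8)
The plan is to compare the maps $\psi_\infty^{(n)}\colon SFH(-S^3_{K,n})\to SFH(-S^3(1))=\HF(-S^3)=\F$ for successive values of the framing $n$, and then to feed the comparison into the explicit description of the stabilisation maps in Theorem \ref{stabmaps}. The geometric input is that a standard Legendrian neighbourhood whose dividing curves have slope $n$ contains, after shrinking, a standard Legendrian neighbourhood with dividing slope $n-1$, the region between the two being a stabilisation basic slice; moreover this can be arranged using \emph{either} the positive or the negative basic slice, and the two inner neighbourhoods are contactomorphic (a standard tight solid torus with a prescribed boundary dividing slope is unique). Gluing in the standard neighbourhood of a Legendrian $L$ with $tb(L)=n$ thus factors, in two ways, through the attachment of a basic slice followed by gluing in a standard neighbourhood at slope $n-1$; by Theorem \ref{associativityphi},
\[
\psi_\infty^{(n)}=\psi_\infty^{(n-1)}\circ\sigma_+=\psi_\infty^{(n-1)}\circ\sigma_-,
\]
so, working over $\F_2$, $\psi_\infty^{(n-1)}$ vanishes on the image of $\sigma_++\sigma_-$.

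Now assume $n\le 2\tau$ (this covers every framing realised by a Legendrian knot in $(S^3,\xi_{\rm st})$, by (\ref{olga}) applied also to $-L$). Reading off Theorem \ref{stabmaps}, the image of $\sigma_++\sigma_-$ contains the ``top'' stable generators $d_{i,\delta(i)}$ and $d^*_{i,\delta(i)}$ (the complementary summand drops out of range), together with all the vectors $d_{i,j}+d_{i,j+1}$, $d^*_{i,j}+d^*_{i,j+1}$ and $u_\ell+u_{\ell+1}$. Hence $\psi_\infty^{(n-1)}$ annihilates every $d_{i,j}$ and every $d^*_{i,j}$, i.e. $\psi_\infty^{(n-1)}|_S=0$, and it takes a constant value on the family $\{u_\ell\}$ of unstable generators. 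The first assertion already yields one direction of the lemma: a stable element is killed by $\psi_\infty$. (Alternatively one can deduce $\psi_\infty|_S=0$ from $S_\pm=\ker\sigma_\pm^N$ as in Remark \ref{torsion_rmk}.)

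For the converse, let $x\in SFH(-S^3_{K,n})$ be homogeneous and not stable. Since $S$ is spanned by Alexander-homogeneous elements and the $u_\ell$ have pairwise distinct Alexander degrees, $x=s+u_{\ell_0}$ with $s\in S$ homogeneous and $u_{\ell_0}$ the unique unstable generator in the Alexander degree of $x$ (which exists precisely because $x$ is not stable); hence $\psi_\infty(x)=\psi_\infty(u_{\ell_0})$, using $\psi_\infty|_S=0$. By the previous paragraph it therefore suffices to show $\psi_\infty^{(n)}$ is not identically zero on the unstable complex — equivalently, since it is constant there, that $\psi_\infty^{(n)}\ne 0$. For $n$ very negative this holds because $K$ has a Legendrian representative $L$ in $(S^3,\xi_{\rm st})$ with $tb(L)=n$, and $\psi_\infty^{(n)}(EH(L))=EH(S^3(1),\xi_{\rm st}(1))$ is identified with $c(S^3,\xi_{\rm st})\ne 0$ (Theorem \ref{EHmap} together with the identification of $c$ with the $EH$ class of $Y(1)$). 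For arbitrary $n<2\tau$ one propagates upwards: from the factorisation $\psi_\infty^{(n+1)}=\psi_\infty^{(n)}\circ\sigma_-$ and the fact that $\sigma_-$ carries each unstable generator of $SFH(-S^3_{K,n+1})$ to an unstable generator of $SFH(-S^3_{K,n})$ — on which $\psi_\infty^{(n)}$ is nonzero — one obtains $\psi_\infty^{(n+1)}\ne 0$, so the conclusion holds for every $n<2\tau$. The boundary case $n=2\tau$, where the unstable complex is empty and $SFH(-S^3_{K,2\tau})=S$, is immediate: $\psi_\infty^{(2\tau)}=0$ by the factorisation, since $\sigma_\pm$ maps $S$ into the stable complex at level $2\tau-1$.

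I expect the geometric step in the first paragraph to be the real content: verifying that $\nu(L)$ genuinely decomposes, in both ways, as a stabilisation basic slice glued onto a contactomorphic copy of the standard neighbourhood at slope $n-1$, so that the two induced composite gluing maps are exactly the $\sigma_\pm$ of Theorem \ref{stabmaps}; everything after that is bookkeeping with that theorem. For framings $n>2\tau$ — which only occur when the ambient $\xi$ is overtwisted, by (\ref{olga}) — the biconditional must be handled separately (for a loose Legendrian one has $EH(L)=0$, which is stable; the non-loose case needs a separate bound), so I would state and apply the lemma in the range $n\le 2\tau$.
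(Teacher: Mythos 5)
Your proposal is correct and follows essentially the same route as the paper: the key identity $\psi_\infty\circ\sigma_\pm=\psi_\infty$ obtained from uniqueness of the standard tight solid torus (via associativity of gluing maps, Theorem \ref{associativityphi}), the structure of Theorem \ref{stabmaps} to show the stable part is killed, and nonvanishing on the unstable part detected by the $EH$ class of an actual Legendrian representative mapping to $c(\xi_{\rm st})\neq 0$. The differences are only bookkeeping -- you propagate nonvanishing upward in the framing and use constancy on the unstable generators, whereas the paper pushes a non-stable element down by $(\sigma_-\circ\sigma_+)^N$ into the one-dimensional middle summand and matches it with a stabilised $EH$ class -- and your explicit restriction to $n\le 2\tau$ is exactly the hypothesis the paper's own argument uses implicitly.
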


\begin{proof}
Consider the Legendrian unknot $L\subset (S^3, \xi_{st})$ with $tb(K_0) = -1$, and stabilise it once (with either sign) to get $L'$. By gluing $\T_\infty$ to either $S^3_L$ or $S^3_{L'}$ we obtain the contact structure $\xi_{st}$ on $S^3$. Observe now that $S^3_{L'}$ is obtained from $S^3_L$ by a stabilisation basic slice: it follows in particular that the union $\T'$ of this basic slice and $\T_\infty$ is a tight solid torus. Honda's classification of tight contact structures of solid tori tells us that $\T'$ is isotopic to $\T_\infty$.

Now the associativity of gluing maps (Theorem \ref{associativityphi}) tells us that, as $\T'$ is isotopic (as a contact manifold) to $\T_\infty$, $\psi_\infty\circ\sigma_\pm = \psi_\infty$.

Suppose that $x$ is stable, then there exists a positive integer $N$ such that $(\sigma_-\circ \sigma_+)^N(x) = 0$, and therefore
\[\psi_\infty(x) = \psi_\infty((\sigma_-\circ \sigma_+)^N(x)) = 0.\]

Suppose now that $x$ is not stable. Then $(\sigma_-\circ\sigma_+)^N(x) \neq 0$ for all $N$. Notice that $\sigma_-\circ\sigma_+$ carries homogenous elements to homogenous elements, and has degree 0. By Theorem \ref{stabmaps}, there is a sufficiently large integer $N$ such that the image of $x$ under $(\sigma_-\circ\sigma_+)^N$ lies in the middle part of the complex. More precisely, it lies in a homogenous component of dimension 1, and in particular $x_N=(\sigma_-\circ\sigma_+)^N(x)$ is the generator of the unstable complex in its Alexander-degree summand.

We claim that $\psi_\infty$ doesn't kill $x_N$.

Now take a knot $L'$ that is Legendrian \emph{with respect to the standard contact structure}, and consider $EH(L')$. From the first part of the proof, we know that, for all $k,\ell\ge0$, $(\sigma_-^k\circ \sigma_+^\ell)(EH(L'))\stackrel{\psi_\infty}{\longmapsto}c(\xi_{\rm st})$. But there are positive integers $k,\ell,m$ such that $(\sigma_-^k\circ\sigma_+^\ell)(EH(L'))$ and $x_{N+m}$ have the same Alexander degree, and are both nonzero. Since they live in the same 1-dimensional summand, they're equal, and in particular $\psi_\infty(x_{N+m}) = \psi_\infty(EH(L')) = c(\xi_{\rm st}) \neq 0$.
\end{proof}

We can now conclude the proof of Proposition \ref{stablevsOT}: recall that Eliashberg \cite{El2} proved that the only tight contact structure on $S^3$ is the standard one, and in particular a contact structure $\xi$ on $S^3$ is overtwisted if and only if $c(\xi)= 0$. By the lemma above, though, $c(\xi) = 0$ if and only if $EH(L)$ is stable.
\end{proof}

We can pin down the Alexander grading of $EH(L)$ using an argument analogous to the one that Ozsv\'ath and Stipsicz use for $\L^-(L)$ \cite{OSt}.

\begin{prop}\label{EHgrading}
Identifying $SFH(-S^3_L) = \HFK(S^3_{tb(L)}(K), \ltilde{K})$ as in Proposition \ref{HFvsSFH}, $EH(L)$ is homogenous of Alexander degree $-r(L)/2$.
\end{prop}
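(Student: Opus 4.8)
The plan is to reduce the grading statement to a computation on stabilisations and then to a known case. First I would recall that the Alexander grading of $EH(L)$ behaves predictably under the stabilisation maps: by Corollary \ref{sigma-grading}, $\sigma_\pm$ shift the Alexander grading by $\mp\sfrac12$, and by Theorem \ref{EHmap} we have $EH(L^{0,1})=\sigma_-(EH(L))$ and $EH(L^{1,0})=\sigma_+(EH(L))$ (for a chosen orientation). On the other hand, the rotation number changes under stabilisation by $r(L^{0,1})=r(L)-1$ and $r(L^{1,0})=r(L)+1$ (with the usual sign conventions, a negative stabilisation decreasing $r$). Thus the two sides of the claimed identity $\tilde A(EH(L)) = -r(L)/2$ transform the same way under both kinds of stabilisation; so it suffices to verify the formula for a single Legendrian representative in each topological-plus-rotation-number class, i.e. to anchor the induction.

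Next I would carry out that anchoring. The cleanest route is to observe that the Alexander grading of $EH(L)$, being an invariant of $L$, is insensitive to orientation reversal of the ambient data only through the identification of $SFH(-S^3_L)$ with $\HFK(S^3_{tb(L)}(K),\ltilde K)$, and then to use Proposition \ref{stablevsOT} together with the structure of the unstable complex. Concretely: when $\xi=\xi_{\rm st}$, $EH(L)$ has a nonvanishing unstable component (by Proposition \ref{stablevsOT} and Eliashberg's theorem), and the unstable complex $U_{tb(L)}$ is, by Theorem \ref{Heddenthm} and the description in Subsection \ref{modules}, a string of one-dimensional summands symmetric about Alexander degree $0$, with top generator $u_1$ in degree $\tau(K)-(tb(L)-1)/2$. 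Since $\sigma_\pm$ act on the $u_\ell$ just by index shift (Theorem \ref{stabmaps}), tracking $EH(L)$ through a maximal chain of stabilisations of one sign pins down which $u_\ell$ it is, and hence its Alexander degree, in terms of $tb(L)$ and $r(L)$; comparing with $\tau(K)-(tb(L)-1)/2$ and using the Bennequin-type equalities at the extremes (where stabilising lands on $u_1$ or on the bottom generator) gives $\tilde A(EH(L))=-r(L)/2$. Alternatively — and this is the ``argument analogous to Ozsv\'ath--Stipsicz'' the statement refers to — one compares with the $\L^-$ side: via the maps of Section \ref{last} relating $SFH(-S^3_{K,n})$ to $HFK^-(-S^3,K)$, and the known fact \cite{OSt} that $\L^-(L)$ sits in Alexander degree $-r(L)/2$, one transports the grading across.

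I would write it up as: (1) state the stabilisation behaviour of $r$ and quote Corollary \ref{sigma-grading} to get that $\tilde A(EH(L))+r(L)/2$ is unchanged under $L\mapsto L^{0,1}$ and $L\mapsto L^{1,0}$; (2) observe every topological type admits a Legendrian representative reachable from any other of the same rotation number by a zig-zag of stabilisations and destabilisations, so it suffices to check one representative — or, more robustly, note that $\sigma_+$ and $\sigma_-$ are eventually injective on the relevant range (Remark \ref{torsion_rmk}) so the identity propagates both up and down the stabilisation tree; (3) anchor using the unstable-complex computation above, where the symmetry of $U_{tb(L)}$ about degree $0$ forces the additive constant to vanish. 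The main obstacle I expect is (3): getting the additive constant exactly right requires care with the half-integer grading conventions in Theorem \ref{Heddenthm} and with the sign convention relating positive/negative stabilisation to the direction of the rotation-number shift — it is easy to be off by a sign or by a shift of $1/2$. Everything else is bookkeeping on top of Theorems \ref{stabmaps} and \ref{EHmap} and Corollary \ref{sigma-grading}.
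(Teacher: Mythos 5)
Your reduction step is fine as far as it goes: granting Corollary \ref{sigma-grading} and the fact that $EH$ of a stabilisation is $\sigma_\pm(EH(L))$, the quantity $A(EH(L))+r(L)/2$ is unchanged under stabilisations of either sign (modulo the minor point that in the paper's notation $L^{0,1}$ is a \emph{positive} stabilisation, so your labels are swapped). The genuine gap is the anchor, which is where all the content lies. There is no ``maximal chain of stabilisations of one sign'' that pins down which $u_\ell$ carries the unstable component of $EH(L)$: in the relevant range $n\le 2\tau$, Theorem \ref{stabmaps} says $\sigma_-$ fixes the index of $u_\ell$ and $\sigma_+$ raises it while the unstable complex keeps growing, so no detectable extremal event (``landing on $u_1$ or on the bottom generator'') ever occurs. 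Knowing the index $\ell$ of the unstable component as a function of $(tb(L),r(L))$ is exactly equivalent to the proposition you are trying to prove, so this is circular. Likewise, the symmetry of $U_{tb(L)}$ about degree $0$ only gives $A(EH(L))=-A(EH(-L))$ (the grading is reversed when the orientation of $\ltilde K$ is reversed, while $EH$ is unoriented); combined with stabilisation invariance this kills the additive constant only for invertible knot types, since $L$ and $-L$ need not admit common stabilisations in general.

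Your fallback -- transporting the grading from $\L^-$ -- is in spirit what the paper does, but not as you state it. First, $\L^-(L)$ sits in Alexander degree $(tb(L)-r(L)+1)/2$ in $HFK^-(-S^3,K)$ (Ozsv\'ath--Stipsicz), not $-r(L)/2$; the entire issue is the relative degree shift between $HFK^-(-S^3,K)$ and $\HFK(-S^3_{tb(L)}(K),\ltilde K)$, which cannot be read off from an abstract identification and is precisely the constant you deferred to the anchor. Second, appealing to ``the maps of Section \ref{last}'' risks circularity: that the isomorphism of Theorem \ref{EH=L-} carries $\EH(L)$ to $\L^-(L)$ is the main theorem, proved later. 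The paper instead makes a direct computation: it forms a triple Heegaard diagram from the LOSS open book, exhibits an explicit Whitney triangle with $n_w=1$, $n_z=0$ joining the generator representing $\L^-(L)$ to the one representing $EH(L)$, applies the Ozsv\'ath--Szab\'o relation $\s(\y)-\s(\x)=(n_w(\psi)-n_z(\psi))PD(\mu)$ (with care about the duality that reverses the shift), and then evaluates $c_1$, using $\langle PD([\mu_{\ltilde K}]),[F,\de F]\rangle = tb(L)$, to land exactly on $-r(L)/2$. That explicit spin-c/triangle computation (or an equivalent direct determination of the degree shift) is the missing ingredient in your write-up; without it the anchor is not established.
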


\begin{proof}
In \cite[Theorem 4.1]{OSt}, Ozsv\'ath and Stipsicz compute the Alexander degree of $\L^-(L)$ by a combinatorial argument on an open book compatible with $L$. They obtain that
\[A(\L^-(L)) = \frac12\langle c_1(\s(\x_L))-PD([\mu_L]), [F,\de F]\rangle = \frac{tb(L)-r(L)+1}2,\]
where $\x_L$ is a generator representing $\L^-(L)$ in some Heegaard diagram for $S^3$.

Let's consider the following set up: let $(\Sigma,\ba,\bb,\bc,z,w)$ be a triple Heegaard diagram, where $(\Sigma,\ba,\bb,z,w)$ is obtained from an open book compatible with $L$ as in \cite{LOSS}, so that $\L^-(L)$ is represented by a generator $\x$ in $CFK^-(\Sigma,\bb,\ba,z,w)$. Now define $\bc$ to be obtained from $\bb$ by replacing $\beta_0$ with $L\subset F$ as sitting inside the page of the open book, and positioned with respect to $z,w$ as in Figure \ref{AlexEH}.

Notice that $(\Sigma,\bb,\bc,z,w)$ represents an unknot in $\#^{g-1} (S^1\times S^2)$, therefore we can choose a generator $\bfTheta$ representing the top-dimensional class in $\HFK(\Sigma,\bb,\bc,z,w)$.

Ozsv\'ath and Szab\'o proved in \cite[Section 2]{OSHFK} that, whenever we have a triangular domain $\psi\in\pi_2(\x,\y,\bfTheta)$, then
\begin{equation}\label{OSdegreeshift}
\s(\y)-\s(\x) = (n_w(\psi)-n_z(\psi))PD(\mu).
\end{equation}

\begin{figure}
\begin{center}
\includegraphics{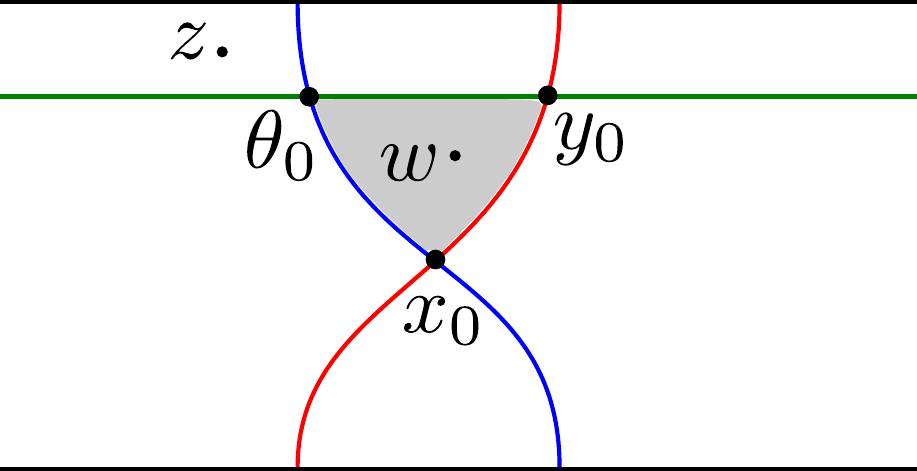}
\caption{The triple Heegaard diagram used in the proof of Proposition \ref{EHgrading}}\label{AlexEH}
\end{center}
\end{figure}

We exhibit in Figure \ref{AlexEH} a Whitney triangle $\psi$ in $\pi_2(\x,\y,\bfTheta)$ with $n_w(\psi) = 1$, $n_z(\psi)=0$ connecting the generator $\x$ in $(\Sigma,\bb,\ba,z,w)$ representing $\L(L)$ and the generator $\y$ for in $(\Sigma,\bc,\ba,D)$ representing $EH(L)$, where $D$ is a disc on $\gamma_0$ that touches the two regions of $\Sigma\setminus(\ba\cup\bc)$ containing $z$ and $w$. Notice that $\x$ and $\y$ live in the \emph{cohomology} groups of $CFL^-(\Sigma,\ba,\bb,z,w)$ and $SFC(\Sigma,\ba,\bc,D) = \CFK(\Sigma,\ba,\bc,z,w)$, so we need to be careful when using Equation \ref{OSdegreeshift}.

More precisely, we want to consider a map $SFC(\Sigma,\bb,\ba)\to SFC(\Sigma,\bc,\ba)$ (we omit basepoint for the sake of clarity), that is dual to a map $SFC(\Sigma,\ba,\bc)\to SFC(\Sigma,\ba,\bb)$ so we should be looking at triangles in the triple Heegaard diagram $(\Sigma,\ba,\bc,\bb)$ instead of $(\Sigma,\ba,\bb,\bc)$. In particular, the grading shifts are reversed: for every triangular domain $D$ in $(\Sigma,\ba,\bb,\bc)$ we associate the domain $-D$ in $(\Sigma,\ba,\bc,\bb)$, so that $n_z$ and $n_w$ change signs.

Since $c_1(\s+\alpha) = c_1(\s)+2\alpha$ for every $\s\in\spin(Y)$ and every $\alpha\in H^2(Y)$, it follows from the computations in \cite[Section 4]{OSt} that
\[\langle c_1(\s(\y)), [F,\de F]\rangle = \langle c_1(\s(\x)), [F,\de F]\rangle-2 = 2A(\L^-(L)) + 1 - 2 = tb(L)-r(L).\]
If we now plug this in Equation \ref{defAlex} and we use $\langle PD([\mu_{\ltilde K}]), [F,\de F]\rangle = tb(L)$, we get
\[A(EH(L)) = \frac{\langle c_1(\s(\y))-PD([\mu_{\ltilde K}]), [F,\de F]\rangle}2 = -\frac{r(L)}2,\]
since $\langle PD([\mu_{\ltilde K}]), [F,\de F]\rangle = tb(L)$ by construction of $S^3_L$.
\end{proof}

We now prove that the hypothesis $tb(L)\le2\tau(K)-1$ above is necessary:

\begin{prop}
For every non-loose unknot $L$ in $S^3$, $EH(L)$ is nonvanishing and purely unstable.
\end{prop}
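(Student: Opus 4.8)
The plan is to prove the two assertions separately; the first is essentially formal and the second is where the contact-geometric input enters.

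\emph{$EH(L)$ is purely unstable.} Here the only relevant fact is that $L$ is topologically an unknot $K$, so that $\CFK(S^3,K)$ is one-dimensional and supported in Alexander degree $0$. In the language of Subsection~\ref{modules} this means $\tau(K)=0$ and the integer $k$ of Theorem~\ref{Heddenthm} vanishes, so the stable subspaces $S_\pm$ of $SFH(-S^3_{K,n})=\HFK(-S^3_n(K),\ltilde K)$ are trivial for \emph{every} $n$ and $SFH(-S^3_{K,n})=U_n$ is spanned by the $u_\ell$'s alone. Since $SFH(-S^3_L)=SFH(-S^3_{K,tb(L)})$, every non-zero class in it — in particular $EH(L)$ — is automatically purely unstable. (A posteriori the non-vanishing below forces $tb(L)\neq 0$, in agreement with the Eliashberg--Fraser classification of Legendrian unknots in overtwisted $S^3$, which in fact gives $tb(L)\ge 1$; note also that this part already shows that, without the hypothesis $tb(L)\le 2\tau(K)$, Proposition~\ref{stablevsOT} would fail, since here $\xi$ is overtwisted but $EH(L)$ is not stable.)

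\emph{$EH(L)$ is non-zero.} The strategy is to embed the sutured manifold $(S^3\setminus\nu(L),\Gamma_L)$ into a Stein fillable closed contact $3$-manifold and quote Corollary~\ref{nonvanishing}. By hypothesis $L$ is non-loose, so $\xi|_{S^3\setminus\nu(L)}$ is tight; moreover $S^3\setminus\nu(L)$ is a solid torus whose convex boundary carries the two parallel dividing curves $\Gamma_L$. I would then invoke Honda's classification of tight contact structures on solid tori \cite{Ho}: each tight contact structure on a solid torus with two boundary dividing curves is the restriction of a tight contact structure on a lens space (or on $S^3$ or $S^1\times S^2$), and every tight contact structure on such a manifold is Stein fillable \cite{Ho}. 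Hence $(S^3\setminus\nu(L),\Gamma_L)$ embeds, with $\Gamma_L$-divided convex boundary, in a Stein fillable contact $3$-manifold, and Corollary~\ref{nonvanishing} yields $EH(L)=EH\bigl(S^3\setminus\nu(L),\xi|_{S^3\setminus\nu(L)}\bigr)\neq 0$.

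The one genuinely non-formal ingredient is this last embedding: one must know that the tight solid torus $S^3\setminus\nu(L)$ sits inside some closed fillable contact manifold, not merely that it is tight, and I expect this to be the main point where care is needed. Honda's classification supplies it cleanly; alternatively, one could argue concretely that, realising the overtwisted structure on $S^3$ as a Lutz twist along the transverse push-off $T$ of $L$ — which lies inside $\nu(L)$, since its self-linking number $tb(L)-r(L)$ is the one being twisted along — the complement $S^3\setminus\nu(L)$ lies in $S^3\setminus\nu(T)$, where $\xi$ agrees with $\xi_{\rm st}$, so $S^3\setminus\nu(L)$ embeds directly in $(S^3,\xi_{\rm st})$. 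I would favour the first route, as checking the details of the second is slightly more delicate. Everything else — the identification $SFH(-S^3_L)=SFH(-S^3_{K,tb(L)})$, the vanishing of the stable complex, and the application of Corollary~\ref{nonvanishing} — is immediate from what has been set up above.
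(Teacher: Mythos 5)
Your first half is exactly the paper's argument: for the unknot the stable subspaces of $SFH(-S^3_{K,n})$ vanish for every framing (and $\tau=0$), so any class, in particular $EH(L)$, is automatically purely unstable. For the nonvanishing your overall strategy is also the paper's -- embed the sutured complement, with convex boundary divided by $\Gamma_L$, into a Stein fillable closed contact manifold and invoke Corollary~\ref{nonvanishing} -- but the key input differs. The paper gets the embedding from Eliashberg--Fraser \cite{EF}: a non-loose unknot has $tb(L)\ge 0$ and admits a \emph{tight} Legendrian surgery; the surgered manifold is a lens space (or $S^1\times S^2$), all of whose tight structures are Stein fillable, and $S^3_L$ sits inside it as the complement of the surgery torus. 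You instead invoke a blanket statement, attributed to \cite{Ho}, that every tight solid torus with two boundary dividing curves is the restriction of a tight (hence fillable) structure on a lens space, $S^3$ or $S^1\times S^2$. This is the soft spot: Honda's theorem as stated is a classification (an isotopy count, distinguished by relative Euler classes), not an embedding statement, and the assertion that each of these tight solid tori extends to a Stein fillable closed manifold is the realization half of that story, which you neither prove nor can quote verbatim; for the solid tori at hand (unknot complements with dividing slope $tb(L)\ge 0$) justifying it is essentially equivalent to the Eliashberg--Fraser statement the paper cites. Quoting \cite{EF} closes this cleanly, and it also supplies $tb(L)\ge 0 = 2\tau(K)$, which is precisely what makes this example witness the necessity of the hypothesis in Proposition~\ref{EHntoEH}.

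Your fallback via a Lutz twist is not merely delicate, it fails: if $\xi$ agreed with $\xi_{\rm st}$ outside $\nu(L)$, then $\nu(L)$ would sit in $(S^3,\xi_{\rm st})$ as an unknotted solid torus with convex boundary whose two dividing curves have slope $tb(L)\ge 0$ with respect to $(\mu_L,\lambda_L)$; such curves meet the meridian disc boundary in two points, so this would be a standard neighbourhood of a Legendrian unknot with $tb\ge 0$ in the standard tight $S^3$, contradicting the Bennequin inequality $tb\le -1$. So the overtwisted structure containing a non-loose unknot is never a Lutz twist supported inside $\nu(L)$, and the primary route (suitably repaired as above, or replaced by the Eliashberg--Fraser argument) is the only viable one.
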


\begin{proof}
When $K$ is the unknot, the stable complex of $S^3_{K,n}$ is trivial for all values of $n$. Also, $\tau(K)=0$.

According to Eliashberg and Fraser \cite{EF}, $L$ has non-negative Thurston-Bennequin number $tb(L)\ge 0 = 2\tau(K)$, and admits a tight Legendrian surgery $(Y,\xi)$. Since $L$ is topologically unknotted, $Y$ is a lens space, and any tight contact structure on a lens space is Stein fillable: in particular $c(Y,\xi)\neq 0$. Then Lemma \ref{nonvanishing} applies, showing that also $EH(L)\neq 0$.
\end{proof}

We conclude the section by giving an alternative proof of the following fact, due to Etnyre and Van Horn-Morris, and Hedden \cite{EV, He3}. If $K\subset S^3$ is a fibred knot, then it's the binding of an open book for $S^3$, and any fibre is a minimal genus Seifert surface for $K$: call $\xi_K$ the contact structure on $S^3$ supported by this open book.

\begin{thm}
$\xi_K$ is tight if and only if $\tau(K) = g(K)$.
\end{thm}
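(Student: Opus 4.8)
The idea is to reduce both implications to Proposition~\ref{stablevsOT} by computing $EH(L)$ for the Legendrian representative of $K$ that comes from the open book. Realize the binding $K$ of the supporting open book $(F,\phi)$ as a Legendrian knot $L\subset(S^3,\xi_K)$ with $tb(L)=2g(K)-1$ and $r(L)=0$ (the binding, as a transverse knot, has self-linking number $-\chi(F)=2g(K)-1$, and admits a Legendrian approximation with vanishing rotation number); then $S^3_L=S^3_{K,2g(K)-1}$ as sutured manifolds, and by Proposition~\ref{stablevsOT} the contact structure $\xi_K$ is tight if and only if $EH(L)\in SFH(-S^3_L)$ is not stable. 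So everything comes down to deciding when $EH(L)$ is stable.

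To compute $EH(L)$ I would build a partial open book for the complement $(S^3\setminus\nu(L),\xi_K)$ directly out of $(F,\phi)$: deleting the binding from the mapping-torus model of $(S^3,\xi_K)$ produces precisely this complement, and one reads off a partial open book derived from $(F,\phi)$, with page a stabilisation of $F$ and monodromy $\phi$. Running the Honda--Kazez--Mati\'c recipe \cite{HKM1} on it pins down the generator $\x(S,P,h)$; the content of the computation is to match its class, under the identification $SFH(-S^3_L)\cong\HFK(-S^3_{2g(K)-1}(K),\ltilde K)$ of Section~\ref{HFKstab} and the surgery description of Subsection~\ref{modules}, with the image of the generator of $\HFK(S^3,K)$ supported in the top Alexander grading $g(K)$ (this group is one-dimensional since $K$ is fibred). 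This is the $EH$-analogue of Ozsv\'ath--Szab\'o's computation identifying the contact class of a fibred knot with the top generator of its knot Floer complex, and it is the step I expect to be the main obstacle.

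Granting it, the theorem is bookkeeping in the bases of Theorems~\ref{Heddenthm} and~\ref{stabmaps}. With the distinguished basis $\eta_0,\eta_i,\eta_i'$ of $\CFK(S^3,K)$, the generator of $\HFK(S^3,K)$ in grading $g(K)$ is the class of $\eta_0^{top}$ exactly when $A(\eta_0)=g(K)$, that is exactly when $\tau(K)=g(K)$; in that case the surgery picture of Subsection~\ref{modules} sends $\x(S,P,h)$ to an \emph{unstable} generator $u_\ell$ (note $2g(K)-1=2\tau(K)-1$, so the unstable complex of $S^3_{K,2g(K)-1}$ is one-dimensional), so $EH(L)$ is non-stable and $\xi_K$ is tight. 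If instead $\tau(K)<g(K)$, that generator lies in one of the arrows $\eta_{2i-1}\stackrel{\de}{\to}\eta_{2i}$, hence is carried to one of the \emph{stable} generators $d_{i,j},d_{i,j}^*$, and $\xi_K$ is overtwisted by Proposition~\ref{stablevsOT}. Throughout, ``tight'' is interchangeable with ``$c(\xi_K)\neq0$'' by Eliashberg's classification of tight contact structures on $S^3$ \cite{El2}.

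It is worth noting that the forward implication also has a computation-free proof: if $\xi_K=\xi_{\rm st}$ then the binding is a transverse representative of $K$ with self-linking number $2g(K)-1$, so that by Plamenevskaya's inequality~(\ref{olga}), applied to a Legendrian approximation with its orientation reversed, $2g(K)-1\le 2\tau(K)-1$; combined with the slice-genus bound $\tau(K)\le g_*(K)\le g(K)$ this forces $\tau(K)=g(K)$. Thus the genuinely new input is the non-stability computation of the middle paragraphs, which settles the backward implication.
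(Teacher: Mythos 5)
Your reduction has the right flavour, but as written the backward direction ($\tau(K)=g(K)\Rightarrow\xi_K$ tight) rests on two claims that are not established, and these are precisely where the content lies. First, the existence of a Legendrian approximation $L$ of the binding with $tb(L)=2g(K)-1$ and $r(L)=0$ is not automatic: Legendrian approximations of a fixed transverse knot are only guaranteed to exist with sufficiently \emph{negative} Thurston--Bennequin number, they all satisfy $tb-r=sl=2g(K)-1$, and they form a chain under negative stabilisation (Theorem~\ref{ttL}); whether that chain reaches up to $tb=2g(K)-1$ is essentially the nontrivial statement that the maximal Thurston--Bennequin invariant is $2g-1$, which you cannot assume before knowing $\xi_K$ is tight. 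Second, the step you yourself flag as ``the main obstacle'' --- identifying $[\x(S,P,h)]$ with the image of the top Alexander-degree generator of $\widehat{HFK}(S^3,K)$ under the surgery description --- is exactly the computation that carries the theorem, and it is not supplied. The paper avoids both problems by working at $tb(L)\ll 0$ (where approximations certainly exist and Theorem~\ref{Heddenthm} applies cleanly) and by pinning down $EH(L)$ \emph{indirectly}: Vela--Vick's theorem gives $\widehat{\mathcal{L}}(L)\neq 0$ \cite{VV}, hence $EH(L)\neq 0$ by \cite{SV}; Proposition~\ref{EHgrading} places $EH(L)$ in Alexander degree $-r(L)/2$, which is the top degree; and fibredness makes that degree one-dimensional, so $EH(L)$ is forced to be the top generator. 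Then $\psi_\infty(EH(L))=c(\xi_K)$ is nonzero or zero according to whether the unstable complex reaches that degree, i.e.\ according to whether $\tau=g$, via Lemma~\ref{stablevspsiinfty}.

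A secondary caution: at $tb=2g(K)-1$ with $\tau(K)<g(K)$ you are in the range $tb>2\tau(K)$, where Proposition~\ref{stablevsOT} fails in the direction ``$EH$ not stable $\Rightarrow$ tight'' (this is exactly what the paper's non-loose unknot proposition shows); you happen to invoke only the surviving direction ``$EH$ stable $\Rightarrow$ $c(\xi)=0$'', but this needs to be said, and Lemma~\ref{stablevspsiinfty} should be cited in the form ``stable $\Rightarrow\psi_\infty=0$'' rather than as an equivalence. On the positive side, your last paragraph is correct and genuinely computation-free: tightness forces $\xi_K=\xi_{\rm st}$ by \cite{El2}, and Plamenevskaya's inequality \eqref{olga} applied to $-L$ for any Legendrian approximation gives $2g(K)-1=tb-r\le 2\tau(K)-1$, whence $\tau(K)=g(K)$; this recovers the forward implication by a route different from (and shorter than) the paper's. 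The gap is therefore concentrated in the converse, where the missing existence statement and the missing $EH$ computation must be replaced by an argument such as the paper's Vela--Vick/grading argument.
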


\begin{proof}
$K$ sits in $\xi_K$ as a transverse knot, and $sl(K)=2g(K)-1$. Let's consider a $\xi_K$-Legendrian approximation $L$ of $K$ such that $tb(L)\ll 0$. Vela--Vick proved that $\Lhat(L)\neq 0$ \cite{VV}, therefore $EH(L)\neq 0$ \cite{SV}. Since $K$ is fibred, $\HFK(S^3,K;g(K))$ is 1-dimensional \cite{OScontact}: using Proposition \ref{EHgrading} above, together with Theorem \ref{Heddenthm} we see that $EH(L)$ is the only nonzero element in the top degree component of $SFH(-S^3_L)$.

If $\tau(K) = g(K)$, then $EH(L)$ is also the generator in top degree of the unstable complex, and in particular $0\neq \psi_\infty(EH(L)) = c(\xi_K)$.

If $\tau(K)<g(K)$, on the other hand, the unstable complex is supported in degree strictly less than $A(EH(L))$, so $0 = \psi_\infty(EH(L)) = c(\xi_K)$.

Thus, applying Eliashberg's classification result \cite{El2} as above, $\xi_K$ is tight if and only if $c(\xi_K)\neq 0$ if and only if $\tau(K)=g(K)$.
\end{proof}

\subsection{The group $\protect\SFH$ }

Let's step back for a second, and consider an oriented topological knot $K$ in $S^3$.

Given a graded vector space $V=\bigoplus_d V_d$, we denote with $V\{s\}$ a graded vector space with graded components $(V\{s\})_d = V_{d-s}$. Consider the family of graded $\F$-vector spaces $\left(A_n := \HFK(-S^3_{-n}(K),\tilde{K})\{(1-n)/2\}\right)$, indexed by integers (notice the $-$ signs in the definition of $A_n$); for each $n$ we have a degree 0 map $\sigma_-: A_n\to A_{n+1}$, the (negative) stabilisation map, induced by the negative basic slice attachment; these data can be conveniently summarized in a direct system $\mathbf{A}_- := ((A_n), (\psi_{m,n})_{n\ge m})$, where the map $\psi_{m,n}:A_m\to A_n$ is $\sigma_-^{n-m}$.

\begin{defn}
Let $\SFH(-S^3,K)$ to be the direct limit $\varinjlim \A_\sigma$, and call $\iota_n$ the universal map $\iota_n: A_n\to \SFH(-S^3,K)$.
\end{defn}

\begin{rmk}
Since we're taking a direct limit, what counts is just what happens for sufficiently large indices. In particular, we just need to know what happens for $n\ge n_0 := -2\tau(K)+1$: this also fits in the picture of contact topology, since this is the only interval where $EH(L)$ can live for a Legendrian $L$ in $(S^3,\xi_{\rm st})$.

What happens for other indices is that, with respect to the maps $\psi_{m,n}$, the only component that survives is $S$: this is going to be more precise below, even though we discuss just the interval $n\ge n_0$.
\end{rmk}

As defined, $\SFH$ is just a graded $\F$-vector space: using the other (\emph{i.e.} the positive) stabilisation map $\sigma_+$, we can endow it with an $\F[U]$-module structure. One way to do it is to identify the projective limit with the quotient of the disjoint union $\coprod A_n$ by the relations $x_i \sim x_j$ whenever there exists $N$ such that $\psi_{i,N}(x_i) = \psi_{j,N}(x_j)$ and defining $U\cdot [x] = [\sigma_+(x)]$: since $\sigma_-$ and $\sigma_+$ commute, the map is well defined. Notice that the map $\sigma_+$ has now Alexander degree $-1$ (due to the degree shift introduced), and so does the map $U\cdot$ on $HFK^-(S^3,K)$.

Alternatively, we can see the map induced by $\sigma_+$ in a more abstract (and universal) way, considering the following diagram:

\[\xymatrix{& \SFH(-S^3,K) &\\
& \SFH(-S^3,K)\ar@{-->}[u]^{U\cdot} & \\
A_m\ar@/^1pc/[uur]^{\iota_m\circ\sigma_+}\ar[rr]_{\psi_{m,n}}\ar[ur]^{\iota_m} & & A_n\ar@/_1pc/[uul]_{\iota_n\circ\sigma_+}\ar[ul]_{\iota_n}
}.\]

Ignoring the dashed arrow, the diagram commutes, since $\sigma_-$ and $\sigma_+$ commute, and by the universal property of the direct limit (and of the arrows $\iota_n$!), there's a uniquely defined map $U\cdot$, that is the dashed arrow.

\begin{rmk}
We have a dual direct system $\A_+$ defined using $\sigma_+$ rather than $\sigma_-$, and changing the sign of the degree shift.

Reversing the orientation of $K$ induces, as expected, an isomorphism of $\F$-vector spaces $\SFH(-S^3,K) \simeq \SFH(-S^3,-K)$: this follows from the fact that the two direct systems $\A_-$ and $\A_+$ are isomorphic. Moreover, the universal isomorphism commutes with the $U$-action, and this $U$-equivariance gives the isomorphism in the category of $\F[U]$-modules.

This symmetry can also be seen as a choice for the labelling of positive \emph{vs} negative stabilisation, which is in fact equivalent to the choice of an orientation.
\end{rmk}

\begin{thm}\label{isoSFH}
The groups $\SFH(-S^3,K)$ and $HFK^-(-S^3,K)$ are isomorphic as $\F[U]$-modules.
\end{thm}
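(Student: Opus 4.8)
The plan is to produce an explicit, degree-preserving, $\F[U]$-module isomorphism by comparing both sides to the combinatorial model for $\HFK(-S^3_{-n}(K),\ltilde K)$ provided by Theorem~\ref{Heddenthm} and the stabilisation maps of Theorem~\ref{stabmaps}. First I would recall that $HFK^-(-S^3,K)$ is, as an $\F[U]$-module, the homology of $(\CFK(S^3,K),\de_K)$ with the $U$-action, and using the basis $\mathcal B=\{\eta_i,\eta'_j\}$ of Section~\ref{modules} one writes it down very concretely: each $\eta'$-pair is acyclic for $\de_K$, each $\eta_i$ ($i\ge 1$) survives, together with $\eta_0$, and the $U$-tower is generated by $[\eta_0^{top}]$ with the $\eta_{2i}$'s contributing $U$-torsion summands $\F[U]/U^{\delta(i)}$ sitting in Alexander degree governed by $A(\eta_{2i})$. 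So $HFK^-(-S^3,K)\cong \mathcal T\oplus\bigoplus_i \F[U]/U^{\delta(i)}$ with the appropriate gradings, where $\mathcal T=\F[U]$ is the tower generated in degree $\tau=\tau(K)$.

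Next I would compute $\SFH(-S^3,K)=\varinjlim\mathbf A_-$ explicitly. By the remark after the definition it suffices to look at indices $n\ge n_0=-2\tau+1$, i.e. $-n<2\tau$, where Theorem~\ref{stabmaps} says $\sigma_-$ acts on $A_n=\HFK(-S^3_{-n}(K),\ltilde K)\{(1-n)/2\}$ by $d_{i,j}\mapsto d_{i,j}$, $u_\ell\mapsto u_\ell$, $d^*_{i,j}\mapsto d^*_{i,j+1}$. Hence in the direct limit the stable-negative part $S_-$ dies (each $d^*_{i,j}$ is eventually pushed past the finite range $1\le j\le\delta(i)$ and becomes $0$), the stable-positive part $S_+$ survives and each family $\{d_{i,j}\}_{1\le j\le\delta(i)}$ becomes a copy of $\F^{\delta(i)}$, and the unstable part $\{u_\ell\}_{1\le\ell\le -n-2\tau}$ grows without bound as $n\to\infty$, giving in the limit a half-infinite tower. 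Tracking the degree shift $\{(1-n)/2\}$ against the Alexander degrees in Theorem~\ref{Heddenthm} — $A(d_{i,j})=A(\eta_{2i})-(j-1)-(m-1)/2$ with $m=-n$, and $A(u_\ell)=\tau-(\ell-1)-(m-1)/2$ — one checks the shifts cancel so that $\iota_n(d_{i,j})$ lands in degree $A(\eta_{2i})-(j-1)$ and $\iota_n(u_\ell)$ in degree $\tau-(\ell-1)$. Finally the $U$-action is $U\cdot[x]=[\sigma_+(x)]$, and by Theorem~\ref{stabmaps} (case $n\le 2\tau$ applied to the surgery slope $-n\le -n_0 < 2\tau$, equivalently using that $\sigma_+$ on $A_n$ sends $d_{i,j}\mapsto d_{i,j+1}$, $u_\ell\mapsto u_{\ell+1}$, $d^*_{i,j}\mapsto d^*_{i,j}$, of Alexander degree $-1$ after the shift) the $U$-action on $\SFH$ sends the class of $d_{i,j}$ to that of $d_{i,j+1}$ (so $U^{\delta(i)}$ kills it) and shifts the tower $u_\ell\mapsto u_{\ell+1}$ by one, i.e. $\SFH(-S^3,K)\cong \mathcal T'\oplus\bigoplus_i\F[U]/U^{\delta(i)}$ with $\mathcal T'$ a tower generated in degree $\tau$.

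I would then assemble the isomorphism: map the tower generator of $\SFH$ (the class of $u_1$, in degree $\tau$) to the tower generator $[\eta_0^{top}]$ of $HFK^-$ (also in degree $\tau$, since $A(\eta_0)=\tau$), and map the class of $d_{i,1}$ to the generator of the summand $\F[U]/U^{\delta(i)}\subset HFK^-$ coming from $\eta_{2i}$; extend $U$-linearly. By the explicit descriptions of the $U$-action on both sides this is a well-defined $\F[U]$-module map, it is degree-preserving because the Alexander degrees matched in the previous paragraph, and it is a bijection on each graded piece because both sides have the same (finite, for $j$ in range) number of generators in each degree — indeed both are literally $\mathcal T\oplus\bigoplus_i\F[U]/U^{\delta(i)}$ with the same grading data $\delta(i)$, $A(\eta_{2i})$, $\tau$. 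The independence of $\SFH$ of the choice of basis $\mathcal B$ follows since the stable subspace $S_+$ and the filtration-type data are basis-independent (Remark~\ref{torsion_rmk}); and one should note that $HFK^-(-S^3,K)$ is genuinely an invariant, so the comparison is between honest invariants.

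The main obstacle I expect is bookkeeping rather than conceptual: one must be scrupulous with (a) the two degree shifts (the $\{(1-n)/2\}$ in the definition of $A_n$ versus the $(m-1)/2$ terms inside Theorem~\ref{Heddenthm}, and the sign $m=-n$) so that the Alexander gradings on the two sides genuinely agree, and (b) the fact that the range of validity $n\le 2\tau$ in Theorem~\ref{stabmaps} must be translated correctly into the surgery slope $-n$ and checked against $n\ge n_0=-2\tau+1$, so that throughout the relevant tail of the direct system the ``first'' case of Theorem~\ref{stabmaps} applies and the unstable complex is never killed by $\sigma_\pm$ (this is exactly Plamenevskaya's inequality~\eqref{olga} talking). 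Once these are nailed down the map is forced and the verification that it is a graded $\F[U]$-isomorphism is routine.
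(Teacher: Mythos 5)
Your proposal is correct and takes essentially the same route as the paper: both arguments identify $\varinjlim\A_-$ with $HFK^-(-S^3,K)$ by combining the model of Theorem~\ref{Heddenthm} with the stabilisation maps of Theorem~\ref{stabmaps}, so that one stable summand dies in the limit, the surviving stable part matches the $U$-torsion $\bigoplus_i\F[U]/U^{\delta(i)}$, the unstable part assembles into the free tower, and $U$ acts as $\sigma_+$; the paper merely packages the vector-space step as a verification of the universal property of the direct limit instead of an explicit computation of it. The only differences are cosmetic: your labelling (with $S_+$ surviving under $\sigma_-$, which is what Theorem~\ref{stabmaps} as stated gives) is opposite to the one used in the paper's proof and Remark~\ref{iotan}, and your bookkeeping has small slips (the range of $\ell$ should be $1\le\ell\le n+2\tau$, and the grading shifts as literally written leave an overall constant), none of which affects the $\F[U]$-module isomorphism being proved.
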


Before diving into the proof, recall Ozsv\'ath and Szab\'o's description of $HFK^-$ (see for example \cite{OSinteger}, especially Figures 1 and 2). The complex is a direct sum of countably many copies of $\HFK$, each thought of as $U^k\cdot\HFK$ for $k\in\N$: this gives the complex the $\F[U]$-structure; we think of each copy drawn as a vertical tile of Alexander-homogeneous components, and that all copies stacked in the plane like a staircase parallel to the $x=y$ diagonal; the differential comes from the complex $(\HFK,\de)$ computing $\HF(S^3)$, and it can be depicted as a set of arrows pointing horizontally, each coming from a vertical arrow in $(\HFK,\de)$ and corresponding to a domain crossing the auxiliary basepoint $w$. There's a quite striking similarity between the first chunks of this complex and the first chunks of the complexes computing $A_n$'s, and this similarity is both the inspiration and the key of the proof of the theorem.

\begin{proof}
We'll split the proof in two steps: first we'll prove the isomorphism of the two as graded $\F$-vector spaces, and then as $\F[U]$-modules. As usual, we'll call $g=g(K)$ and $\tau=\tau(K)$.

\vskip 0,4 cm

\textbf{Step 1}. We want to prove there are maps $j_n: A_n\to H := HFK^-(-S^3,K)$ such that $(H,\{A_n\},\{j_n\})$ satisfy the universal property for the direct limit of $\A_-$:
\[\xymatrix{& C &\\
& H\ar@{-->}[u]^{\phi} & \\
A_m\ar@/^1pc/[uur]^{\phi_m}\ar[rr]_{\psi_{m,n}}\ar[ur]^{j_m} & & A_n\ar@/_1pc/[uul]_{\phi_n}\ar[ul]_{j_n}
}.\]

We need to define the maps $j_n$ first, and then we need to prove that for every commutative diagram with maps $\phi_n$ to a module $C$ there is a unique (dashed) map $\phi$ making the full diagram commute.

The maps $j_n$ are easily defined: thanks to the previous description, $HFK^-(-S^3,K)$ is the direct sum of a copy of $S_-\subset A_n$ and a copy of $\F[U]$, with $A(U^k) = \tau-k$; imagining a superposition between the two pictures for the complexes computing $A_n$ and $H$ yields to the claim that $j_n$ would like to be a fixed (\emph{i.e.} not depending on $n$) graded isomorphism on $S_-$, zero on $S_+$ and the degree 0, injective map $U_n\to \F[U]$: the commutativity of the lower triangle of the diagram is clear by the description of the maps $\sigma_\pm$.

Now we can consider the full diagram, and show that $\phi$ is uniquely defined by $(\phi_n)_{n\ge n_0}$: consider an element $x_m = a_m+s_m\in A_m$, with $s_m\in S_+$ and $a_m\in S_-\oplus U_m$, and consider the diagram for $n=m+d\gg m$: since the lower triangle is commutative, we have that
\[\phi_m(x_m) = \phi_n(\sigma_-^d(x_m)) = \phi_n(\sigma_-^d(a_m)) = \phi_m(a_m),\]
so $\phi_m(S_+) = 0$: this implies that the map $\phi_m$ factors through $j_m$.

Now, define $\phi$ by $\phi|_{S_-} = \phi_m|_{S_-}$ for some $m$ and $\phi|_{\F[U]/(U^m)} = \phi_m\circ j_m^{-1}$: notice how $\phi$ is well defined (since $\sigma_-$ is an isomorphism on $S_-$ and the injection of degree $+\sfrac12$ on the unstable complex), and makes the diagram commute.

Since $j_m$ is injective on $S_-\oplus U_m$ and $\F[U]$ is the direct limit of $\F[U]/(U^k)$, this is the only way we can define $\phi$, and this concludes the first part of the proof.

\begin{rmk}\label{iotan}
It's worth remarking explicitly what we've proven: we've shown that the inclusion map $\iota_n: \HFK(-S^3_{-n}(K),\tilde{K})\to \SFH(-S^3,K)$ is injective on $S_-\oplus U$, and that $S_+ = \ker\iota_n$ for each $n\ge n_0$. Moreover, for $n$ sufficiently large, the map $\iota_n$ is an isomorphism between truncations of $A_n$ and $HFK^-(-S^3,K)$ that forgets of all elements of low Alexander degree.
\end{rmk}

\vskip 0,4 cm

\textbf{Step 2}. We now need to prove that the two $\F[U]$-module structure correspond under some map: we just need to show that the universal map $\Phi$ in the diagram
\[\xymatrix{& H &\\
& \SFH(-S^3,K)\ar[u]^{\Phi} & \\
A_m\ar@/^1pc/[uur]^{j_m}\ar[rr]_{\psi_{m,n}}\ar[ur]^{\iota_m} & & A_n\ar@/_1pc/[uul]_{j_n}\ar[ul]_{\iota_n}
}\]
is $U$-equivariant, since the universal property for $(H,\{A_n\},\{\iota_n\})$ already implies that it's an $\F$-isomorphism. For $x\in A_n$, the map $\Phi$ sends $\iota_n(x)$ to the class $[x]=j_n(x)$.

We have a good way to picture $\Phi$ when the framing is large: in this case, we just superpose the picture of the complex described in Section \ref{modules} above with Ozv\'ath and Szab\'o's description, and identify generators pointwise. But we're working with the projective limit $\SFH$, which is not the disjoint union $\coprod A_n$, but rather its quotient by the relation $x\sim \psi_{m,n}(x)$. Up to changing the choice of $n$ and $x$, we can suppose that Theorem \ref{Heddenthm} above applies: in this case, the map $\sigma_+$ is just an injection of $A_n$ \emph{on the bottom} of $A_{n+1}$, which, in Ozsv\'ath and Szab\'o's picture corresponds to shifting each copy $U^k\cdot\HFK(-S^3,K)$ to the next one, $U^{k+1}\cdot\HFK(-S^3,K)$, hence proving the $U$-equivariance of $\Phi$.
\end{proof}

\subsection{$\protect\EH$ invariants}

Suppose now we have an oriented Legendrian knot $L$ in $(S^3,\xi)$, of topological type $K$: by construction, we have a naturally defined \emph{oriented} contact class in $\SFH(-S^3,K)$.

\begin{defn}
Define the class $\EH(L)\in \SFH(-S^3,K)$ as $[EH(L)]$, in the identification $\SFH(-S^3,K) = \coprod A_n/\sim$.
\end{defn}

We can immediately read off some facts about this new invariant, that follow straight away from the definition:

\begin{prop}\label{EH-prop}
Consider an oriented Legendrian $L$ in $(S^3,\xi)$ of topological type $K$; then:
\begin{itemize}
 \item[(i)] for a \emph{negative} stabilisation $L'$ of $L$, $\EH(L') = \EH(L)$;

 \item[(ii)] for a \emph{positive} stabilisation $L''$ of $L$, $\EH(L') = U\cdot \EH(L)$;

 \item[(iii)] $\EH(L)$ is an element of $U$-torsion if and only if $\xi$ is overtwisted.
 
 \item[(iv)] $\EH(L)$ sits in Alexander grading $\displaystyle\frac{tb(L)-r(l)+1}2$.
\end{itemize}
\end{prop}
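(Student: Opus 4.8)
The plan is to read off all four items directly from the definition $\EH(L)=[EH(L)]\in\SFH(-S^3,K)=\coprod_n A_n/\!\sim$, once it is combined with the structural results of the previous subsections. The single geometric input needed is the following: if $L^{1,0}$ and $L^{0,1}$ denote the negative and the positive stabilisation of the oriented knot $L$, then $EH(L^{1,0})=\sigma_-(EH(L))$ and $EH(L^{0,1})=\sigma_+(EH(L))$. This follows from Honda's description of the sutured complement of a stabilisation as $S^3_L$ with a stabilisation basic slice attached, together with Theorem \ref{EHmap} applied to that basic-slice gluing, the sign being fixed by the chosen orientation of $L$. With this in hand, (i) and (ii) are formal. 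Writing $A_n=SFH(-S^3_L)$ for the copy containing $EH(L)$: since the structure maps of $\A_-$ are the negative stabilisation maps, the universal maps of the direct limit satisfy $\iota_{n+1}\circ\sigma_-=\iota_n$, so $\EH(L^{1,0})=\iota_{n+1}(\sigma_-(EH(L)))=\iota_n(EH(L))=\EH(L)$; and the $\F[U]$-structure on $\SFH$ was set up so that $U\cdot\iota_n(x)=\iota_{n+1}(\sigma_+(x))$, whence $\EH(L^{0,1})=\iota_{n+1}(\sigma_+(EH(L)))=U\cdot\EH(L)$. (I read the ``$\EH(L')$'' in (ii) as a misprint for ``$\EH(L'')$'', and ``$r(l)$'' in (iv) as ``$r(L)$''.)

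Item (iv) is Proposition \ref{EHgrading} plus bookkeeping of the grading shift. That proposition gives that $EH(L)$ is Alexander-homogeneous of degree $-r(L)/2$ inside $SFH(-S^3_L)=\HFK(-S^3_{tb(L)}(K),\ltilde{K})$; this group enters the direct system as $A_{-tb(L)}=\HFK(-S^3_{tb(L)}(K),\ltilde{K})\{(1+tb(L))/2\}$, so after the built-in shift the degree of $EH(L)$ becomes $-r(L)/2+(1+tb(L))/2=(tb(L)-r(L)+1)/2$; since the $\iota_n$ and the $U$-action are Alexander-homogeneous of degree $0$, the class $\EH(L)$ is homogeneous of this degree.

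Item (iii) is the one with real content, and the plan is to reduce it to Proposition \ref{stablevsOT} ($\xi$ overtwisted $\iff$ $EH(L)$ stable) and then to prove that $\EH(L)$ is $U$-torsion $\iff$ $EH(L)$ is stable. In the range $tb(L)\le 2\tau(K)-1$ — which is automatic for $(S^3,\xi_{\rm st})$ by Plamenevskaya's inequality \eqref{olga} — this comes from Theorem \ref{isoSFH}: under $\SFH(-S^3,K)\cong HFK^-(-S^3,K)=\F[U]\oplus(\text{$U$-torsion})$, the map $\iota_n$ carries the unstable complex $U_n\subset SFH(-S^3_L)$ injectively onto a truncation of the free $\F[U]$-tower and the stable complex into the torsion summand (cf.\ Step 1 of the proof of Theorem \ref{isoSFH} and Remark \ref{iotan}). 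As $EH(L)$ is homogeneous it splits as (stable part)$+$(unstable part) in $S_+\oplus U_n\oplus S_-$, so $\EH(L)$ has vanishing free-summand component exactly when the unstable part vanishes, i.e.\ exactly when $EH(L)$ is stable; and an element of $HFK^-(-S^3,K)$ is $U$-torsion precisely when its free-summand component is zero. Proposition \ref{stablevsOT} then closes the loop. The complementary range $tb(L)\ge 2\tau(K)$ does not occur for $\xi_{\rm st}$, and there $\xi$ is forced to be overtwisted while $\EH(L)$ is forced to be $U$-torsion — the unstable complex being annihilated as the slope decreases through the stabilisation tower, by the second case of Theorem \ref{stabmaps} — so (iii) holds trivially.

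The part I expect to require care is exactly the chain of identifications behind (iii): one has to keep three normalisations compatible — the explicit formulas for $\sigma_\pm$ in Theorem \ref{stabmaps}, the passage to the direct limit that defines $\SFH$, and Ozsv\'ath--Szab\'o's staircase picture of $HFK^-$ — so that the statements ``$\sigma_+$ eventually kills one stable summand and fixes the other'', ``the other stable summand dies in the direct limit'', and ``$U_n$ injects onto the free tower'' all line up. No new idea enters here; it is bookkeeping that repackages Theorem \ref{isoSFH} and Remark \ref{torsion_rmk}, but it is the place where the argument is most delicate.
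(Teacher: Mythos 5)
Your proposal is correct, and for items (i), (ii) and (iv) it is essentially the paper's own proof: the same geometric input ($EH$ of a stabilisation is $\sigma_\mp$ of $EH(L)$, via the basic-slice gluing and Theorem \ref{EHmap}), the same formal manipulation in the direct limit, and the same combination of Proposition \ref{EHgrading} with the shift $\{(1+tb(L))/2\}$ built into $A_{-tb(L)}$. The only genuine divergence is in (iii). The paper argues intrinsically in the limit: by construction $[x]\in\SFH(-S^3,K)$ is of $U$-torsion if and only if $(\sigma_-\circ\sigma_+)^\ell(x)=0$ for some $\ell$, i.e.\ (in the relevant range, by Theorem \ref{stabmaps} and Remark \ref{torsion_rmk}) if and only if $EH(L)$ is stable, and then Proposition \ref{stablevsOT} closes the argument; you instead pass through Theorem \ref{isoSFH} and Remark \ref{iotan}, identifying $\SFH(-S^3,K)$ with $HFK^-(-S^3,K)=\F[U]\oplus(\text{torsion})$ and checking that the unstable component is exactly the free-tower component. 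Your route is logically sound and not circular (Theorem \ref{isoSFH} does not use Proposition \ref{EH-prop}), but it invokes a heavier tool where the paper only needs the definition of the limit; on the other hand your case split $tb(L)\le 2\tau(K)-1$ versus $tb(L)\ge 2\tau(K)$ is slightly cleaner than the paper's ($tb<2\tau$ versus $tb>2\tau$), since it explicitly covers the boundary value $tb(L)=2\tau(K)$, where the unstable summand is trivial, everything is eventually annihilated, and $\xi$ is forced to be overtwisted, so both sides of the equivalence hold -- a case the paper's wording passes over in silence.
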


\begin{proof}
\begin{itemize}
\item[\emph{(i)}] $L'$ is a negative stabilisation of $L$, so $EH(L') = \sigma_-(EH(L))$, and
\[\EH(L') = [EH(L')] = [\sigma_-(EH(L))] = [EH(L)] = \EH(L).\]

\item[\emph{(ii)}] $L''$ is a positive stabilisation of $-L$, so $EH(L'') = \sigma_+(EH(L))$, and
\[\EH(L'') = [EH(L'')] = [\sigma_+(EH(L))] = U\cdot[EH(L)]= U\cdot \EH(L).\]

\item[\emph{(iii)}] By definition, an element $[x]$ of $\SFH(-S^3,K)$ vanishes if and only if $\sigma_-^k(x) = 0$ for some $k$, and is of $U$-torsion if and only if $[\sigma_+^h(x)] = 0$ for some $h$: in particular, since $\sigma_-$ and $\sigma_+$ commute, $[x]$ is of $U$-torsion if and only if $(\sigma_-\circ\sigma_+)^\ell(x) = 0$ for some $\ell$. If $tb(L)>2\tau(K)$ (and therefore $\xi$ is overtwisted), we know that $SFH(-S^3_L) = \ker (\sigma_-\circ\sigma_+)^\ell$, so in particular $EH(L)$ is $U$-torsion. On the other hand, if $tb(L)<2\tau(L)$, Lemma \ref{stablevsOT} tells us that $(\sigma_-\circ\sigma_+)^\ell(EH(L))$ vanishes if and only if $\xi$ is overtwisted.

\item[\emph{(iv)}] $EH(L)$ lives in the group $\HFK(-S^3_{tb(L)}(K), \ltilde{K})$, and by Proposition \ref{EHgrading}, its Alexander degree is $-r(L)/2$. Therefore, it lives in degree $\frac{tb(L)-r(L)+1}2$ in $A_{-tb(L)}$ and in $\SFH(-S^3,K)$.
\end{itemize}
\end{proof}

\begin{rmk}\label{orientation_rmk2}
$EH(L)$ is an \emph{unoriented} invariant, \emph{i.e.} doesn't see orientation reversal, whereas the sign of the stabilisation does (see Remark \ref{orientation_rmk}), so one apparently can find a contradiction in Proposition \ref{EH-prop}. What happens is that when we reverse the orientation of $L$, we also reverse the orientation of $K$ and we swap the r\^oles the two maps $\sigma_-$ and $\sigma_+$ play. The two resulting groups, associated to $\A_-$ and $\A_+$ are -- as already noticed -- isomorphic, but in the first one $\sigma_-$ acts trivially and $\sigma_+$ acts as $U$ (as seen in the proof of Proposition \ref{EH-prop}.\emph{(i,ii)}), while in the second one we'd have to write:
\[\begin{array}{l}\EH(L') = [EH(L')] = [\sigma_-(EH(L))] = U\cdot[EH(L)],\\
\EH(L'') = [EH(L'')] = [\sigma_+(EH(L)] = [EH(L))].\end{array}\]
\end{rmk}

\subsection{Transverse invariants}\label{trans_inv}

Let's just recall the classical theorem relating transverse and Legendrian knots: it will be the key fact throughout this subsection.

\begin{thm}[\cite{EH}]\label{ttL}
Two transverse knots are transverse isotopic if and only if any two of their Legendrian approximations are Legendrian isotopic up to negative stabilisations.
\end{thm}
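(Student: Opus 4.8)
The plan is to prove this as the Etnyre--Honda correspondence \cite{EH}, organising everything around the \emph{transverse push-off} operation. Write $\mathcal{T}(L)$ for the transverse push-off of an oriented Legendrian knot $L$, i.e.\ the transverse isotopy class of a curve transverse to the contact planes and $C^\infty$-close to $L$ on the positive side; and recall that a \emph{Legendrian approximation} of a transverse knot $T$ is any Legendrian knot obtained from $T$ by a $C^0$-small perturbation inside the contact planes. The two operations are mutually inverse in the weak sense that $\mathcal{T}(L)$ is transverse isotopic to $T$ for \emph{every} Legendrian approximation $L$ of $T$; this, together with two lemmas, is what I would set up first. Lemma A: for every oriented Legendrian $L$, the negative stabilisation $L^{1,0}$ satisfies $\mathcal{T}(L^{1,0})=\mathcal{T}(L)$ as transverse knots (whereas a positive stabilisation raises the self-linking number by $2$ and genuinely changes the push-off). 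Lemma B: any two Legendrian approximations of one and the same transverse knot $T$ become Legendrian isotopic after finitely many negative stabilisations of each.

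With these in hand the ``$\Leftarrow$'' direction is formal. Suppose $L_0$ is a Legendrian approximation of $T_0$, $L_1$ one of $T_1$, and that after negative stabilisations $L_0^{k,0}$ and $L_1^{\ell,0}$ are Legendrian isotopic. Apply $\mathcal{T}$: it is manifestly invariant under Legendrian isotopy, and invariant under negative stabilisation by Lemma A, so $T_0\simeq\mathcal{T}(L_0)=\mathcal{T}(L_0^{k,0})\simeq\mathcal{T}(L_1^{\ell,0})=\mathcal{T}(L_1)\simeq T_1$ transversally. Contrapositively, transversally non-isotopic knots cannot have negatively-stabilisation-equivalent Legendrian approximations, which is exactly the content needed for the ``only if'' part read across all pairs.

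For the ``$\Rightarrow$'' direction I would use the transverse isotopy extension theorem: a transverse isotopy from $T_0$ to $T_1$ extends to an ambient contact isotopy $\phi_t$ with $\phi_0=\mathrm{id}$ and $\phi_1(T_0)=T_1$. Given any Legendrian approximations $L_0$ of $T_0$ and $L_1$ of $T_1$, the knot $\phi_1(L_0)$ is again Legendrian (contactomorphisms preserve the Legendrian condition), is a $C^0$-small perturbation of $T_1$, hence is a Legendrian approximation of $T_1$, and is Legendrian isotopic to $L_0$ via $t\mapsto\phi_t(L_0)$. Now $\phi_1(L_0)$ and $L_1$ are two Legendrian approximations of the \emph{same} transverse knot $T_1$, so Lemma B makes them negatively-stabilisation-equivalent, and therefore so are $L_0$ and $L_1$.

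The hard part is Lemma B, which is the genuine geometric input; Lemma A and the two isotopy-extension reductions are routine. To prove Lemma B I would put both Legendrian approximations of $T$ into front position relative to a fixed front of $T$ and compare them: a generic $1$-parameter family of such fronts interpolating between the two approximations changes combinatorial type only at finitely many times, and one must show by an explicit local model that each elementary change is either a Legendrian Reidemeister move or the creation/annihilation of a \emph{negative} zig-zag --- never a positive one, since positive zig-zags alter the transverse push-off and hence cannot appear among perturbations of the fixed $T$. Establishing this genericity statement and checking the finite list of local models is where essentially all the work lies; everything else is bookkeeping.
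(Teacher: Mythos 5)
First, a point of comparison: the paper offers no proof of this statement at all -- it is recalled as a classical theorem and attributed to Etnyre and Honda \cite{EH} -- so the only meaningful comparison is with the argument in that reference. Your outline reproduces the standard reduction correctly: the transverse push-off is invariant under Legendrian isotopy and under negative stabilisation (your Lemma A), which gives one implication; transverse isotopy extension plus naturality of Legendrian approximation reduces the other implication to the statement that any two Legendrian approximations of one and the same transverse knot become Legendrian isotopic after finitely many negative stabilisations (your Lemma B). All of that is fine, and it is indeed how the result is organised in the literature.

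The gap is that Lemma B \emph{is} the Etnyre--Honda theorem, and you do not prove it: you only describe a plan (genericity of a one-parameter family of fronts, plus an unexamined finite list of local models). As written this restates the difficulty rather than resolving it. In particular, the key claim -- that only negative zig-zags can be created or annihilated along the interpolation -- does not follow from the observation that a positive stabilisation changes the transverse push-off: that observation constrains the two endpoints of the family, not the elementary catastrophes occurring at intermediate times, where the curves need not be Legendrian approximations of the fixed transverse knot $T$ in any controlled sense. Nor is ``Legendrian approximation'' as a $C^0$-small perturbation inside the contact planes a definition one can compute with directly; in \cite{EH} the approximation is taken inside a standard neighbourhood of $T$ (a solid torus with a model contact structure), and Lemma B is proved there by analysing Legendrian curves on, and characteristic foliations of, the concentric tori in that neighbourhood, which is where the asymmetry between positive and negative stabilisations genuinely enters. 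To make your proposal a proof you would have to either carry out that neighbourhood/convex-torus analysis or actually establish the front-genericity statement and classify the local models, neither of which is sketched beyond naming them.
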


As it happens for $\L^-$, also $\EH$ descends to a transverse isotopy invariant of transverse knots:

\begin{defn}
Given a transverse knot $T$ in $(S^3,\xi)$ of topological type $K$, we can define $\EH(T) = \EH(L)$ for a Legendrian approximation $L$ of $T$.
\end{defn}

The transverse element is well-defined, in light of Proposition \ref{EH-prop} and Theorem \ref{ttL}. A stronger statement holds, the natural counterpart of Proposition \ref{EHntoEH}, that reveals a transverse nature of $EH$:

\begin{thm}
Suppose $L, L'$ are two oriented Legendrian knots in $S^3$ that have the same classical invariants. Suppose also that both the transverse pushoffs of $L, L'$ and the ones of $-L,-L'$ are transversely isotopic. Then $EH(L) = EH(L')$.
\end{thm}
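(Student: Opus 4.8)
The plan is to recover $EH(L)$ from the two transverse pushoffs together with the classical invariants, using the dictionary between stabilisations and the $\EH$ (resp. $U$-action) developed above. Write $t = tb(L) = tb(L')$ and $r = r(L) = r(L')$, so that $S^3_L \cong S^3_{L'} \cong S^3_{K,t}$ as sutured manifolds, and $EH(L), EH(L')$ both live in $SFH(-S^3_{K,t}) = \HFK(-S^3_t(K),\ltilde K)$, in Alexander grading $-r/2$ by Proposition \ref{EHgrading}. The first observation is that, by Proposition \ref{EH-prop}.(iii) and Lemma \ref{stablevspsiinfty}, $EH(L)$ is stable if and only if $\xi$ is overtwisted; since $L$ and $L'$ are topologically isotopic and both live in $(S^3,\xi_{\mathrm{st}})$, either both are stable (in which case we argue as below from the stable part alone) or both have nonzero unstable component.

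First I would extract the stable part. By hypothesis the transverse pushoff $T$ of $L$ is transversely isotopic to that of $L'$, hence $\EH(L) = \EH(T) = \EH(T') = \EH(L')$ in $\SFH(-S^3,K)$ by the definition of the transverse invariant and Theorem \ref{ttL}; similarly $\EH(-L) = \EH(-L')$. Now $\EH(L) = [EH(L)] = \iota_{-t}(EH(L))$, and by Remark \ref{iotan} the map $\iota_{-t}$ is injective on $S_- \oplus U$ with kernel exactly $S_+$. Therefore the equality $\EH(L) = \EH(L')$ forces the $S_-\oplus U$-components of $EH(L)$ and $EH(L')$ to agree. Reversing orientation, by Remark \ref{orientation_rmk} the roles of $\sigma_\pm$, hence of $S_\pm$, are swapped, so that $\EH(-L) = \EH(-L')$ says the $S_+\oplus U$-components agree. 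Since $SFH(-S^3_{K,t}) = S_+ \oplus U_t \oplus S_-$, the two conclusions together pin down $EH(L) = EH(L')$ — except that the $U_t$-component is only determined up to the choice of basis involved in the splitting (Remark \ref{torsion_rmk}), so some care is needed: what the two equalities genuinely give, basis-independently, is that $EH(L)$ and $EH(L')$ have the same image in $S_-\oplus U_t$ and the same image in $S_+\oplus U_t$ under the canonical projections, equivalently the same stable part $S = S_+\oplus S_-$ and the same unstable part.

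To make the unstable part honest, I would invoke Proposition \ref{EHgrading}: the unstable complex $U_t$ is a single $\F$ in each Alexander grading in its (nonempty, by Plamenevskaya's inequality \eqref{olga}) range, and $EH(L)$ is Alexander-homogeneous of degree $-r/2$; since $r(L) = r(L')$, the unstable components of $EH(L)$ and $EH(L')$ lie in the same one-dimensional summand and are both nonzero (when $\xi = \xi_{\mathrm{st}}$), hence equal. Combined with the agreement of the stable parts, this gives $EH(L) = EH(L')$. When $\xi$ is overtwisted the unstable component vanishes and the stable part alone suffices. I expect the main obstacle to be exactly this bookkeeping around the unstable complex: disentangling which data ($\EH(\pm L)$, the classical invariants) see which basis-independent piece of $SFH(-S^3_{K,t})$, and checking that the overlap in the $U_t$-direction is nonetheless forced — this is where Propositions \ref{stablevsOT}, \ref{EHgrading} and Remark \ref{torsion_rmk} must be combined carefully rather than the more formal manipulation with direct limits.
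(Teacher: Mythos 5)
Your proposal is correct and follows essentially the same route as the paper's proof: the transverse hypotheses give $\EH(\pm L)=\EH(\pm L')$, and Remark \ref{iotan} applied to both orientations reconstructs all three components of $EH$. The extra care you take with Proposition \ref{EHgrading} and the basis-dependence of the unstable summand is harmless but unnecessary, since the two equalities already force $EH(L)-EH(L')\in S_+\cap S_-=\{0\}$.
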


\begin{proof}
Since the pushoffs of $L$ and $L'$ (respectively, of $-L$ and $-L'$) are transverse isotopic, $\EH(L)=\EH(L')$ (resp. $\EH(-L)=\EH(-L')$). By Remark \ref{iotan}, and by the behaviour of $\sigma_\pm$ on the unstable complex, we can reconstruct all three components (that is, along $S_\pm$ and $U$) of $EH(L)$ from $\EH(L)$ and $\EH(-L)$, and this concludes the proof.
\end{proof}

\section{$\protect\EH$ \emph{vs} $\L^-$}\label{last}

Fix an oriented Legendrian knot $L$ in $(S^3,\xi)$, of topological type $K$: the LOSS invariant $\L^-(L)$ is an element of $HFK^-(-S^3,K)$, which has just been proven isomorphic to $\SFH(-S^3,K)$, where $\EH(L)$ lives. Let's also recall the following theorem:

\begin{thm}\cite[Theorems 1.2 and 1.6]{LOSS}
For $L$ as before:
\begin{itemize}
 \item[(i)] for a \emph{negative} stabilisation $L'$ of $L$, $\L^-(L') = \L^-(L)$;

 \item[(ii)] for a \emph{positive} stabilisation $L''$ of $L$, $\L^-(L'') = U\cdot \L^-(L)$;

 \item[(iii)] $\L^-(L)$ is an element of $U$-torsion if and only if $\xi$ is overtwisted.
 
 \item[(iv)] $\L^-(L)$ sits in Alexander degree $\frac{tb(L)-r(L)+1}2$.

\end{itemize}
\end{thm}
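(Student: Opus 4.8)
Since the statement is recalled verbatim from \cite{LOSS}, the task is to reconstruct their argument, and I would organise it entirely around the open-book description of $\L^-(L)$. First I would fix an open book $(S,h)$ supporting $(S^3,\xi)$ with $L$ lying on a page, the page framing agreeing with the contact framing of $L$ and the orientation of $L$ agreeing with the boundary orientation of the page; then I would choose a basis of arcs $\{a_1,\dots,a_k\}$ for $S$ in which $a_1$ is the unique arc meeting $L$ (in one point), and place the two basepoints $z,w$ on the two sides of $L\cap a_1$ as in \cite{LOSS}. This produces a doubly-pointed Heegaard diagram and a distinguished generator $\x_L$ supported on the page; as in \cite{LOSS} one checks that $\x_L$ is a cycle in $CFK^-(-S^3,K)$ and that its class is $\L^-(L)$. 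Everything below is then read off from this diagram.

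For (i) and (ii) --- which I expect to be the main obstacle --- the key point is that a negative or positive stabilisation of $L$ is a modification inside a Darboux ball and, after a Giroux stabilisation of $(S,h)$, can be realised by attaching a single band to the page along an arc parallel to $L$ and composing $h$ with the corresponding Dehn twist. On the Heegaard diagram this attaches one handle carrying a new pair $(\alpha_{k+1},\beta_{k+1})$ meeting in two points; $\x_{L'}$ is $\x_L$ together with one of these new points, and a sequence of handleslides and isotopies supported near the new handle reduces the stabilised diagram with generator $\x_{L'}$ back to the original one. What distinguishes the two signs is whether the canceling region produced by this reduction crosses the basepoint $w$: for a negative stabilisation it does not, so the reduced diagram is an honest filtered Heegaard-Floer stabilisation and $\L^-(L')=\L^-(L)$; for a positive stabilisation it crosses $w$ exactly once and misses $z$, so the reduction map sends the new generator to $U\cdot\x_L$ and $\L^-(L'')=U\cdot\L^-(L)$. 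This basepoint bookkeeping is the delicate part, and it is exactly where the work in \cite{LOSS} is concentrated; I would simply follow their pictures.

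For (iii) I would use the relation, established in \cite{LOSS}, that setting $U=1$ in $CFK^-(-S^3,K)$ carries $\L^-(L)$ to the Ozsv\'ath--Szab\'o contact class $c(\xi)$ (this is the remark following Proposition \ref{stablevsOT}). If $\L^-(L)$ is $U$-torsion, say $U^N\cdot\L^-(L)=0$, then, since specialising at $U=1$ is $U$-equivariant, its image vanishes, so $c(\xi)=0$; since by Eliashberg \cite{El2} the only tight contact structure on $S^3$ is the standard one and $c(\xi_{\rm st})\neq 0$, this forces $\xi$ to be overtwisted. For the converse I would follow \cite{LOSS} again: when $\xi$ is overtwisted one can choose a supporting open book carrying $L$ in which the overtwisted disc produces a holomorphic disc showing that $U^N\cdot\x_L$ is a boundary for $N$ large, so $\L^-(L)$ is $U$-torsion.

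For (iv) the Alexander grading of $\L^-(L)$ is $\tfrac12\langle c_1(\s(\x_L))-PD([\mu_L]),[F,\de F]\rangle$, and I would compute this first Chern class pairing combinatorially on the open book adapted to $L$, where it records the writhe and rotation data of $L$; this is exactly the computation carried out by Ozsv\'ath and Stipsicz in \cite[Theorem 4.1]{OSt} and already quoted in the proof of Proposition \ref{EHgrading}, and it gives $A(\L^-(L))=\tfrac{tb(L)-r(L)+1}{2}$.
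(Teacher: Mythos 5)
A preliminary remark on the comparison itself: the paper does not prove this statement --- it is quoted from \cite{LOSS} (Theorems 1.2 and 1.6), and the grading claim in (iv) is the computation of Ozsv\'ath and Stipsicz \cite{OSt} that the paper invokes separately in the proof of Proposition \ref{EHgrading}. So what can be assessed is whether your reconstruction matches the known arguments, and for (i), (ii) and (iv) it does: the open book adapted to $L$ with a single arc meeting $L$, the realisation of a stabilisation by a Giroux stabilisation of the open book, the basepoint bookkeeping (does the relevant domain cross $w$ or not) that distinguishes the two signs, and the evaluation of $c_1(\s(\x_L))-PD([\mu_L])$ on a Seifert class for the Alexander degree are exactly the standard route.

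The genuine soft spot is the ``overtwisted $\Rightarrow$ $U$-torsion'' half of (iii). Your proposed mechanism --- that the overtwisted disc ``produces a holomorphic disc showing that $U^N\x_L$ is a boundary'' --- is not an argument (no one exhibits such a disc directly from an overtwisted disc), and it is not how this implication is established. In fact you already hold the needed tool: $HFK^-(-S^3,K)\cong\F[U]\oplus(\text{$U$-torsion})$ as an $\F[U]$-module, and since $\L^-(L)$ is Alexander-homogeneous it is $U$-torsion if and only if its image under the $U=1$ specialisation vanishes; by \cite{LOSS} that image is $c(\xi)$, which vanishes exactly when $\xi$ is overtwisted, by Eliashberg's uniqueness of the tight structure on $S^3$ \cite{El2} together with $c(\xi_{\rm st})\neq 0$ and the vanishing of $c$ for overtwisted structures \cite{OScontact}. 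So both directions of (iii) follow from the specialisation statement plus this small algebraic observation --- note that homogeneity is genuinely needed here, since a non-homogeneous non-torsion class such as $(U+1)x$ dies at $U=1$. Alternatively one can argue geometrically via the vanishing of $\L^-$ for loose knots combined with the stabilisation formulas (i)--(ii), writing $U^N\cdot\L^-(L)$ as the invariant of an $N$-fold positive stabilisation, but then one must verify that those particular stabilisations become loose, an orientation-sensitive step you would have to address; the algebraic route avoids it.
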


Notice how the theorem above is formally identical to our Proposition \ref{EH-prop}: it's therefore natural to compare the two invariants $\EH$ and $\L^-$.

\begin{thm}\label{EH=L-}
Given $L$ as before, there's an isomorphism of bigraded $\F[U]$-modules $\SFH(-S^3,K) \to HFK^-(-S^3,K)$ taking $\EH(L)$ to $\L^-(L)$.
\end{thm}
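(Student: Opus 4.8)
The plan is to promote the abstract isomorphism of Theorem~\ref{isoSFH} to one matching the two invariants. Fix the bigraded $\F[U]$-module isomorphism $\Psi_0\colon\SFH(-S^3,K)\to HFK^-(-S^3,K)$ constructed there; by that construction $\Psi_0$ respects the splittings into a free summand and a torsion submodule, carrying the colimit of the unstable complexes $U_n$ onto $\F[U]$ (with generator in Alexander grading $\tau=\tau(K)$) and the stable summand $S_-$, embedded via $\iota_n$, onto the torsion submodule $T$. The starting point is that $\EH$ and $\L^-$ obey \emph{the same} formal rules: by Proposition~\ref{EH-prop} and \cite[Theorems 1.2 and 1.6]{LOSS}, each is unchanged by a negative stabilisation, multiplied by $U$ by a positive one, $U$-torsion exactly when $\xi$ is overtwisted, and homogeneous of Alexander degree $(tb(L)-r(L)+1)/2$. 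We must find a single adjustment of $\Psi_0$ with $\Psi_0(\EH(L))=\L^-(L)$ for \emph{every} Legendrian $L$ of topological type $K$; this uniformity, together with Remark~\ref{iotan} and the effect of orientation reversal, is what yields Theorem~\ref{mainthm}.

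The free components take care of themselves. Set $a=(tb(L)-r(L)+1)/2$; Plamenevskaya's inequality~\eqref{olga} gives $a\le\tau$, so the degree-$a$ part of $\F[U]\subset HFK^-(-S^3,K)$ is the line spanned by $U^{\tau-a}$ times the generator. Decomposing $\L^-(L)$ along $\F[U]\oplus T$, its free component equals this element when $\L^-(L)$ is not $U$-torsion and vanishes otherwise; by Proposition~\ref{EH-prop}(iii)--(iv) and the computation of the grading of $\EH(L)$, the free component of $\Psi_0(\EH(L))$ follows the identical rule. Hence $\Psi_0(\EH(L))$ and $\L^-(L)$ already have equal free components for every $L$, and since every $\F[U]$-automorphism of $HFK^-(-S^3,K)$ is the identity on free components, any further adjustment of $\Psi_0$ preserves this. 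All the content is therefore in the torsion part: one must understand the $S_-$-component of $EH(L)$ in $SFH(-S^3_L)$ (it is not lost under $\iota_n$, which is injective on $S_-$) and the torsion component of $\L^-(L)$, and exhibit one automorphism of $T$ reconciling the two families.

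To pin these down I would pass to a shared diagram. From an open book for $(S^3,\xi)$ adapted to $L$ as in \cite{LOSS} one builds the doubly-pointed diagram computing $HFK^-(-S^3,K)$, on which $\L^-(L)$ is the LOSS generator; deleting a standard Legendrian neighbourhood $\nu(L)$ converts this into a partial-open-book/sutured diagram for $S^3_L$ carrying the Honda--Kazez--Mati\'c generator for $EH(L)$, which is precisely the comparison set up by Stipsicz--V\'ertesi \cite{SV} at the hat level. Tracing $SFH(-S^3_L)\xrightarrow{\iota_n}\SFH(-S^3,K)\xrightarrow{\Psi_0}HFK^-(-S^3,K)$ through this diagram, pinning $\EH(L)$ down in the explicit model of Section~\ref{modules} for $tb(L)\ll0$ (using Propositions~\ref{EHgrading} and \ref{stablevsOT}, the identity $\psi_\infty(EH(L))=c(\xi)$, and the action of $\sigma_\pm$ from Theorem~\ref{stabmaps}), and then spreading the conclusion to all $tb$ by stabilisation-equivariance, should identify $\Psi_0(\EH(L))$ with $\L^-(L)$ up to a torsion automorphism that can be fixed once and for all. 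The $U\mapsto0$ specialisation, under which $\L^-(L)$ becomes $\Lhat(L)$ by \cite[Theorem 1.2]{LOSS} and $EH(L)$ recovers $\Lhat(L)$ via \cite{SV}, is the consistency check that forces the remaining freedom on $T$.

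The main obstacle lives in that last step. The free part of each invariant is rigid and matched by gradings alone, but the torsion (stable) part is flexible, so one genuinely has to compare the two distinguished generators themselves -- not merely their Alexander degrees and $U$-divisibility -- and to do so coherently for \emph{all} Legendrian representatives at once. Producing a single automorphism of the torsion submodule that works uniformly, and verifying that it is forced by the stabilised computations together with the hat-level identification of \cite{SV}, is where the argument has to do its real work.
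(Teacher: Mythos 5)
Your overall strategy --- match the free parts by formal properties, then compare the torsion (stable) parts in a shared diagram built from an open book adapted to $L$, using the explicit model at $tb\ll 0$ and stabilisation-equivariance --- is in outline the same as the paper's, which constructs an arc diagram and a doubly-pointed diagram from one open book for a negative stabilisation of $L$, computes the negative-stabilisation bypass maps by explicit triangle counts, and then identifies the stabilised sutured complex with $CFK^-$ via a handleslide and a refinement of Hedden's winding-region theorem. The first half of your argument is fine: with $\F_2$ coefficients the projection of a homogeneous non-torsion element to $HFK^-/T\cong\F[U]$ is determined by its Alexander degree, so Proposition \ref{EH-prop} and the LOSS properties do force the free components of $\Psi_0(\EH(L))$ and $\L^-(L)$ to agree.

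The genuine gap is that the step you yourself flag as ``where the argument has to do its real work'' is never carried out, and it cannot be closed by the tools you invoke. The formal properties (gradings, behaviour under $\sigma_\pm$/$U$, torsion iff overtwisted) only determine each invariant modulo stable elements, so they say nothing about the $S_-$-component of $EH(L)$ versus the torsion component of $\L^-(L)$; two classes differing by a torsion element of the correct bidegree satisfy all of them. Nor is the remaining freedom ``forced by the hat-level identification of \cite{SV}'': setting $U=0$ only controls the image of $\L^-(L)$ in $\HFK$, and torsion elements in the kernel of that specialisation are invisible to it, so the consistency check does not pin down an automorphism of $T$. What is actually needed --- and what the paper supplies --- is a chain-level comparison of the two distinguished generators: Lemmas \ref{positive_is} and \ref{xzero} compute the bypass (stabilisation) triangle counts and show $\sigma_-^n(EH(L'))$ is represented by a single explicit generator; an $\alpha$-handleslide then puts the diagram in half-winding-region form, and the Hedden-type isomorphism $\Phi$ identifies that generator with the LOSS generator representing $\L^-(L)$. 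Without some version of this computation (or an equivalent naturality statement relating the gluing maps to the $U$-action on $CFK^-$), the proposal asserts the conclusion rather than proving it. A further, smaller point: for the application to Theorem \ref{mainthm} you correctly note that the identification must be uniform over all Legendrian representatives, which again is exactly what the explicit diagrammatic identification provides and what an ``adjustment of $\Psi_0$ chosen once and for all'' would have to be shown to achieve.
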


We postpone the proof of the main theorem to the last subsection, and draw some conclusions from the theorem, first.

It's now worth stressing and making precise what we've announced in the introduction, that $EH(L)$ (but \emph{not} $\EH(L)$!) contains at least as much information as $\L^-(L)$ and $\L^-(-L)$ together. We can prove the following refinement of Theorem \ref{mainthm}:

\begin{thm}
For two oriented Legendrian knots $L_0,L_1$ in $(S^3,\xi)$ of topological type $K$, with $tb(L_0), tb(L_1)\le 2\tau(K)$, the following are equivalent:
\begin{itemize}
\item[(i)] $EH(L_0) = EH(L_1)$;
\item[(ii)] $\L^-(L_0) = \L^-(L_1)$ and $\L^-(-L_0) = \L^-(-L_1)$.
\end{itemize}
In general, withouth any restriction on the Thurston-Bennequin numbers of $L_0$ and $L_1$, (i) implies (ii).
\end{thm}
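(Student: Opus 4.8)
The plan is to route everything through the reformulated invariant $\EH$ and Theorem \ref{EH=L-}, which supplies a fixed isomorphism $\SFH(-S^3,K)\to HFK^-(-S^3,K)$ carrying $\EH(L)$ to $\L^-(L)$ for every Legendrian $L$ of type $K$ (and, applied with $-K$ in place of $K$, carrying $\EH(-L)$ to $\L^-(-L)$). The one structural fact to keep in mind throughout is that $EH(L)$ is a single, orientation-blind element of the sutured group $SFH(-S^3_L)$, whereas $\EH(L)$ and $\EH(-L)$ are its images under the universal maps of the two dually isomorphic direct systems $\A_-$ and $\A_+$: reversing the orientation fixes $EH(L)$ but interchanges $\sigma_-$ with $\sigma_+$ (Remark \ref{orientation_rmk}).

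\emph{(i)$\Rightarrow$(ii), with no restriction on the Thurston--Bennequin numbers.} If $EH(L_0)=EH(L_1)$ (so in particular $tb(L_0)=tb(L_1)=:t$, the only case in which this equality is meaningful), apply the universal map $\iota_n$ of $\A_-$, $n=-t$, to get $\EH(L_0)=[EH(L_0)]=[EH(L_1)]=\EH(L_1)$; since $EH$ is insensitive to orientation reversal, the same element passes under the universal map of $\A_+$ to $\EH(-L_i)$, whence $\EH(-L_0)=\EH(-L_1)$ too. Theorem \ref{EH=L-}, for $K$ and for $-K$, then gives $\L^-(L_0)=\L^-(L_1)$ and $\L^-(-L_0)=\L^-(-L_1)$.

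\emph{(ii)$\Rightarrow$(i), when $tb(L_0),tb(L_1)\le 2\tau(K)$.} By Theorem \ref{EH=L-}, condition (ii) becomes $\EH(L_0)=\EH(L_1)$ and $\EH(-L_0)=\EH(-L_1)$. First I would check that $L_0,L_1$ have the same classical invariants: $\L^-(L)$ is Alexander-homogeneous of degree $\tfrac{tb(L)-r(L)+1}{2}$ (Proposition \ref{EH-prop}(iv), \cite{LOSS}), and since orientation reversal sends $r\mapsto -r$, matching the Alexander gradings on the two sides of (ii) forces $tb(L_0)=tb(L_1)=:t$ and $r(L_0)=r(L_1)$, so that $EH(L_0),EH(L_1)$ genuinely lie in the common group $SFH(-S^3_{K,t})=A_n$, $n=-t$. (If $\L^-(L_i)=0$, which can only occur for overtwisted $\xi$, the grading argument is vacuous, but the last step below still yields $EH(L_i)=0$.) Next, writing $\iota_n$ and $\iota_n^+$ for the universal maps of $\A_-$ and $\A_+$, the remark of the first paragraph gives $\EH(L_i)=\iota_n(EH(L_i))$ and $\EH(-L_i)=\iota_n^+(EH(L_i))$, so $EH(L_0)-EH(L_1)\in\ker\iota_n\cap\ker\iota_n^+$. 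Finally, by Remark \ref{iotan} (available precisely because $t\le 2\tau(K)$) the kernels $\ker\iota_n$ and $\ker\iota_n^+$ are the two summands $S_+$, $S_-$ of the stable complex $S=S_+\oplus S_-$, with the two labels interchanged by the orientation reversal; hence $\ker\iota_n\cap\ker\iota_n^+=S_+\cap S_-=0$ and $EH(L_0)=EH(L_1)$.

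I expect the main obstacle to be keeping the orientation-reversal bookkeeping straight: that $\EH(L)$ and $\EH(-L)$ are images of the \emph{same} element $EH(L)$ under the universal maps of two different direct systems, and that the kernels of those maps are exactly the two complementary halves of the stable complex, so that $S_+\cap S_-=0$ reconstructs $EH(L)$ from the pair $(\EH(L),\EH(-L))$ --- the same mechanism underlying Proposition \ref{EHntoEH} and the transverse-type statement closing Section \ref{newLeg}. The hypothesis $tb\le 2\tau$ is used only to guarantee that Remark \ref{iotan} is in force (for overtwisted $\xi$ with $tb>2\tau$ stabilisation eventually kills the unstable complex and the clean description of $\ker\iota_n$ breaks down); for $\xi=\xi_{\rm st}$ it is automatic, since Plamenevskaya's inequality \eqref{olga} applied to both orientations of each $L_i$ forces $tb(L_i)\le 2\tau(K)-1$.
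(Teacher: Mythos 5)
Your proposal is correct and takes essentially the same route as the paper: Theorem \ref{EH=L-} gives (i)$\Rightarrow$(ii), and the converse reconstructs $EH(L_i)$ from the pair $(\EH(L_i),\EH(-L_i))$ exactly as the paper does via Remark \ref{iotan} and the orientation-reversal symmetry. Your formulation that the difference lies in $\ker\iota_n\cap\ker\iota_n^{+}=S_+\cap S_-=0$ is just a repackaging of the paper's statement that $\L^-(L_i)$ fixes the $S_-$ and $U$ components of $EH(L_i)$ while $\L^-(-L_i)$ fixes the $S_+$ component.
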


\begin{proof}
$EH(L_i)$ determines both $\L^-(L_i)$ and $\L^-(L_i)$ by Theorem \ref{EH=L-}, so \emph{(ii)} follows from \emph{(i)}.

Let's now suppose that the constraint on the Thurston-Bennequin invariants holds. As already observed (see Remarks \ref{orientation_rmk} and \ref{orientation_rmk2}), $\EH$ is an oriented invariant of Legendrian knots with the following property: $\EH(L_1) = \EH(L_2)$ if and only if the components of $EH(L_1)$ and $EH(L_2)$ along $S_-$ and $U$ agree. In particular, if $\L^-(L_0) = \L^-(L_1)$ the components of $EH(L_i)$ along $S_-$ and along $U$ are equal; if $\L^-(-L_0) = \L^-(-L_1)$, then the $S_+$ components of $EH(L_1)$ and $EH(L_2)$ agree, too, thus showing that $EH(L_1) = EH(L_2)$.
\end{proof}

\subsection{Triangle counts}

The proof of Theorem \ref{EH=L-} relies on bypass attachments on contact sutured knot complements and the induced gluing maps, henceforth called simply \deff{bypass maps}.

There is another description of a sutured manifold with torus boundary and annular $R_+$ we're going to need: an \deff{arc diagram} $\H^a$ is a quintuple $(\Sigma,\ba,{\beta^a},\bb^c,D)$, where $\Sigma$ is a closed surface, $\ba$ and $\bb^c$ are sets of non-disconnecting, simple closed curves in $\Sigma$, $D$ is a closed disc disjoint from $\ba\cup\bb^c$ and $\beta^a$ is an arc properly embedded $\Sigma\setminus ({\rm Int}(D)\cup\bb^c)$. We further ask that $|\ba| = g = g(\Sigma)$, and $|\bb^c|= g-1$. We will often drop $D$ from the notation and write $\bb$ for $\bb^c\cup\{\beta^a\}$, for sake of brevity.

We build a sutured manifold $(M,\Gamma)$ with torus boundary and two parallel sutures out of $\H^a$ as follows: the set of $\alpha$-curves determines how to attach $g$ upside-down 2-handles on $\Sigma\times\{0\}\subset\Sigma\times[0,1]$; we attach a 0-handle (a ball) to fill up the remaining component of the lower boundary; the set $\bb^c$ of $\beta$-\emph{curves} determines the attaching circles of 2-handles on $\Sigma\times\{1\}$. We define $M$ to be the manifold obtained by smoothing corners after these handle attachments; notice that $D$ is an embedded disc in $\de M$, and $\beta^a$ is an embedded arc in $\de M$. Let $R_+$ be a small regular neighbourhood of $D\cup\beta^a$ and $\Gamma$ be its boundary.

We can now consider the chain complex $SFC(\H^a)$ as usual, by taking $g$-tuples of intersection points of $\alpha$-curves and $\beta$-curves and arcs, so that no two points lie on the same curve or arc, and the differential counts holomorphic discs whose associated domains do not touch the disc $D$. It is clear that $SFC(\H^a)$ is isomorphic as a chain complex to the complex associated to a doubly-pointed Heegaard diagram representing the dual knot $\ltilde K$ inside $S^3_{\gamma}(K)$; in particular, it is also chain homotopic to a complex computing $SFH(M,\Gamma)$.

\begin{rmk}
The construction above is related to Zarev's bordered sutured manifolds and their bordered sutured diagrams \cite{Zarev}, and in fact generalises to sutured manifolds with connected $R_+$. What we called arc diagrams are in fact similar to bordered sutured diagrams (but not to what he calls arc diagrams).
\end{rmk}

In order to obtain the bypass maps we need to count holomorphic triangles in triple arc diagrams. At the level of arc diagrams, attaching a bypass to $(M,\Gamma)$ corresponds to choosing another arc $\gamma^a$ on $\Sigma$, which intersects $\beta^a$ transversely in a single point $\theta^a$. Every $\gamma$-curve is a small perturbation of a $\beta$-curve in $\H_\beta$, and therefore there is a preferred choice among the two intersection points (see \cite{OSHF} and Section \ref{beta} below), giving an element $\bfTheta$. We then have:

\begin{thm}[\cite{JakeHKM}]\label{Jakebypassprop}
The bypass map is induced by the triangle count map $F(\cdot\otimes\bfTheta)$ associated to the triple diagram described above.
\end{thm}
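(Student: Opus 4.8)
The plan is to build the bypass attachment from elementary contact handles, realise each as an explicit move on the arc diagram, and then collapse the resulting composite of handle maps to the single triangle count $F(\cdot\otimes\bfTheta)$.

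First I would recall the structure of the attachment. Since $\partial M$ is a torus, attaching a bypass along the prescribed Legendrian arc leaves the underlying manifold unchanged and produces on $N := M'\setminus{\rm Int}(M)\cong T^2\times I$ a tight contact structure which, by the remark preceding the statement, is one of Honda's basic slices (or a trivial $T^2\times I$ layer). Such a layer has a standard contact handle decomposition: one contact $1$-handle followed by one contact $2$-handle. By Theorem \ref{associativityphi}, the bypass map $\Phi_\xi$ therefore factors as $\Phi_{\xi_2}\circ\Phi_{\xi_1}$, with $\xi_i$ the contact structure on the $i$-th handle.

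Next I would translate the two handle attachments into the arc-diagram language of this section. The contact $1$-handle, attached along two points of $\Gamma$, produces a sutured manifold whose arc diagram is $\H^a$ stabilised by an auxiliary handle of $\Sigma$ carrying one new $\alpha$-curve and one new $\beta$-curve meeting transversally in a single point; $\Phi_{\xi_1}$ is the canonical isomorphism of $SFC$ induced by this stabilisation. The contact $2$-handle, attached along the Legendrian curve dual to the bypass arc, corresponds --- after the destabilisation that cancels the auxiliary handle --- to replacing $\beta^a$ by the new arc $\gamma^a$ (with $\gamma^a\cap\beta^a = \{\theta^a\}$), leaving the closed $\beta$-curves fixed up to a small Hamiltonian perturbation, together with a triangle-counting map against a distinguished generator. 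Concretely, in the triple diagram $(\Sigma,\ba,\bb,\bc)$ with $\bb = \bb^c\cup\{\beta^a\}$ and $\bc = \bb^c\cup\{\gamma^a\}$, the sub-diagram $(\Sigma,\bb,\bc)$ represents the trivial piece and carries a distinguished top generator $\bfTheta$ --- forced to be $\theta^a$ in the arc coordinate and the Ozsv\'ath--Szab\'o preferred intersection points in the closed coordinates. I claim the contact $2$-handle map is $F(\cdot\otimes\bfTheta)$, and I would check this by a local-model computation near the handle, where the only holomorphic triangle of the correct Maslov index with domain avoiding $D$ is the small one at $\theta^a$, so that the composite $\Phi_{\xi_2}\circ\Phi_{\xi_1}$ descends, after destabilisation, to $F(\cdot\otimes\bfTheta)$ on $(\Sigma,\ba,\bb,\bc)$.

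I expect the main obstacle to be this last identification: controlling the moduli of holomorphic triangles in the composite triple diagram, and checking that cancellation of the auxiliary $1$-handle commutes with the triangle map (a triangle-count analogue of stabilisation invariance for $SFH$). A secondary, bookkeeping difficulty is keeping track of the orientation reversals --- working with $-M$, hence $SFC(\Sigma,\bb,\ba)$ rather than $SFC(\Sigma,\ba,\bb)$, and $F(\cdot\otimes\bfTheta)$ rather than $F(\bfTheta\otimes\cdot)$ --- so that source, target and variance come out right. As a consistency check, one verifies directly that $F(\cdot\otimes\bfTheta)$ carries the contact generator of a partial open book for $(M,\Gamma)$ to that of the extended partial open book for $(M',\Gamma')$: the connecting triangle has $n_z = 0$, so it contributes, and by Theorem \ref{EHmap} this is precisely the property that characterises the bypass map on contact classes.
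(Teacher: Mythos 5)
You should first note a point of comparison: the paper does not prove Theorem \ref{Jakebypassprop} at all --- it is quoted from Rasmussen's in-preparation work \cite{JakeHKM} --- so there is no internal argument to measure your sketch against. Judged on its own terms, your outline follows the natural strategy (decompose the bypass as a contact $1$-handle followed by a contact $2$-handle and compose gluing maps via Theorem \ref{associativityphi}), but it has a genuine gap at exactly the step you flag as the ``main obstacle'', and the gap is essentially circular: the assertion that the contact $2$-handle gluing map is computed by a triangle count against a distinguished cycle is a statement of the same nature and the same difficulty as the theorem itself. The Honda--Kazez--Mati\'c gluing maps are defined through partial open books compatible with the contact structure on the layer, up to Giroux stabilisation; nothing in that definition produces a triangle count, so identifying $\Phi_{\xi_2}$ with $F(\cdot\otimes\bfTheta)$ is the real content and cannot be assumed or settled by a local computation. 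Moreover, the local-model argument as stated does not suffice on its own: holomorphic triangles in a (weakly admissible) triple diagram need not be supported near the handle, so ``the only index-zero triangle avoiding $D$ is the small one at $\theta^a$'' is not automatic; one needs either global constraints on domains (positivity together with the homological boundary conditions and the basepoint disc $D$, in the style of the explicit triangle counts carried out in Section \ref{last}), or an independent characterisation of the gluing map against which a triangle count can be checked.

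Two smaller points. Your description of the contact $1$-handle move is not quite right: attaching a contact $1$-handle changes the sutured manifold (the boundary genus increases), and diagrammatically it is realised by attaching a band to the sutured Heegaard surface with no new curves; adding an auxiliary handle carrying a new $\alpha$/$\beta$ pair meeting in one point is a stabilisation, which represents the \emph{same} sutured manifold, so the intermediate diagram you describe is not a diagram for the $1$-handle-attached manifold. Finally, your closing consistency check --- that $F(\cdot\otimes\bfTheta)$ carries the contact class of $(M,\Gamma,\xi_M)$ to that of $(M',\Gamma')$ --- only verifies agreement of the two maps on one element; by Theorem \ref{EHmap} this is a property the bypass map has, but it does not characterise the map, so it cannot be used to close the gap above.
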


Somewhat confusingly, the r\^oles of $\alpha$- and $\beta$-curves are reversed when talking about contact invariants, since we're looking at elements in $SFH(-M,-\Gamma)$ rather than in $SFH(M,\Gamma)$: we will be very explicit and careful about the issue of triangle counts in this setting, as we discuss below.

Recall that, given a Legendrian knot $L$ in any contact 3-manifold, $EH(L)$ is the class of a generator $\x_{EH} \in SFC(\Sigma, \bb, \ba)$, where $\H = (\Sigma,\ba,\bb)$ is an arc diagram representing $S^3_L$ (notice the order of $\ba$ and $\bb$).

\subsubsection{$\alpha$-slides}\label{alpha} If we want to do a handle-slide among the $\alpha$-curves in $\H$, say changing $\ba =\{\alpha_i\}$ to $\ba' = \{\alpha_i'\}$, what we are doing is replacing the \emph{second} set of curves in a (doubly-pointed) Heegaard diagram. A triangle count in $(\Sigma,\bb,\ba,\ba')$ gives a map
\[
CF(\bb,\ba)\otimes CF(\ba,\ba') \otimes CF(\ba',\bb) \to \F,
\]
which in turn gives a map
\[
F_{\alpha\alpha'}: CF(\bb,\ba)\otimes  CF(\ba,\ba') \to CF(\bb,\ba').
\]
(Here we've been dropping $\Sigma$ from the notation, and we'll do it again later.)

In all cases we're going to meet, the top-dimensional generator in $HF(\ba,\ba')$ will be represented by a single generator, that we call $\bfTheta_{\alpha\alpha'}$, and the map we'll be looking at is $\Psi_{\alpha\alpha'}: F_{\alpha\alpha'}(\cdot\otimes\bfTheta_{\alpha\alpha'})$.

Consider a holomorphic triangle $\psi$ connecting $\x$ to $\y$ giving a nontrivial contribution to the $\Psi_{\alpha\alpha'}$; that is, the Maslov index of $\psi$ is 0 and the moduli space contains an odd number of points. The boundary of the domain $\D(\psi)$ associated to $\psi$ has the following behaviour along its boundary:
\begin{itemize}
\item[A1.] $\de\de_\alpha \D(\psi) = \x-\bfTheta_{\alpha\alpha'}$;
\item[A2.] $\de\de_{\alpha'} \D(\psi) = \bfTheta_{\alpha\alpha'}-\y$;
\item[A3.] $\de\de_\beta \D(\psi) = \y-\x$.
\end{itemize}
This amounts to saying that if we travel along $\de\D(\psi)$ following the orientation induced by $\D(\psi)$ we cyclicly run along curves in the order $\beta, \alpha', \alpha$.

\subsubsection{$\beta$-slides}\label{beta} On the contrary, if we're doing some triangle count that changes the $\beta$-curves or arcs instead (as we will see below), we are going to face the opposite behaviour. More precisely, consider a set of curves $\bb'$. A triangle count in $(\Sigma,\bb,\ba,\bb')$ gives a map
\[
CF(\bb,\ba)\otimes CF(\ba,\bb') \otimes CF(\bb',\bb) \to \F,
\]
which in turn gives a map
\[
F_{\beta\alpha\beta'}: CF(\bb,\ba)\otimes  CF(\bb',\bb) \to CF(\bb',\ba).
\]
In all cases we're going to meet, the top-dimensional generator in $HF(\bb',\bb)$ will be represented by a single generator, that we call $\bfTheta_{\beta'\beta}$, and the map we'll be looking at is $\Psi_{\beta\beta'}: F_{\alpha\alpha'}(\cdot\otimes\bfTheta_{\beta\beta'})$. Notice that $\bfTheta_{\beta'\beta}$ represents the bottom-dimensional generator of $HF(\bb,\bb')$.

Therefore, if we call $\psi$ a triangle as above, giving a nontrivial summand $\y$ in $F_{\beta\beta'}(\x)$ connecting generators $\x$ and $\y$, we get the following conditions on $\D(\psi)$:
\begin{itemize}
\item[B1.] $\de\de_\alpha \D(\psi) = \x-\y$;
\item[B2.] $\de\de_{\beta} \D(\psi) = \bfTheta_{\beta'\beta}-\x$;
\item[B3.] $\de\de_{\beta'} \D(\psi) = \y-\bfTheta_{\beta'\beta}$.
\end{itemize}
That is to say that moving along $\de\D(\psi)$ following the orientation induced by $\D(\psi)$ we meet the curves $\beta, \beta',\alpha$.

\subsection{Proof of Theorem \ref{EH=L-}}

The idea underlying the proof is to find explicit representatives for the two contact invariants $\EH(L)$ and $\L^-(L)$ that live in suitable Heegaard diagram, and compare them.

The proof will be divided in three steps:

\begin{enumerate}
\item We construct an open book $(S^3,\xi,L')$ for a single negative stabilisation $L'$ of $L$, together with an associated arc diagram $\H^{sut}$ and an associated doubly-pointed Heegaard diagram $\H^{knot}$, representing $SFH(-S^3_{L'})$ and $HFK^-(-S^3, L')$ respectively.
\item We consider a large negative stabilisation $L^{stab}$ of $L'$. Stabilisations corresponds to bypass attachments on $\H^{sut}$: we compute the associated triangle counts, obtaining a generator in a diagram $\H^{stab}$, representing $EH(L^{stab})$. Moreover, $\H^{stab}$ has a handle that is very similar to the \deff{winding region} (see Figure \ref{Heddentype}).
\item Finally, we handle-slide a single $\alpha$-curve and compare $\H^{stab}$ with $\H^{knot}$ using a refinement of a result of Hedden \cite{He}.
\end{enumerate}

\begin{figure}[ht]
\begin{center}
\includegraphics{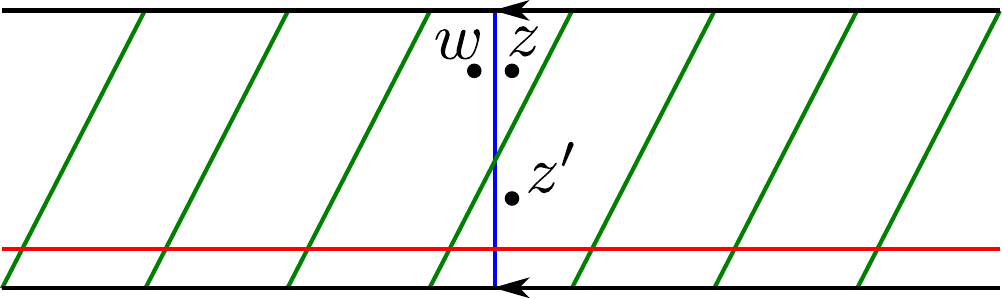}
\end{center}
\caption{The winding region: the picture represent a handle, with the top and the bottom sides of the rectangle identified according to arrows. The horizontal curve (in red) is a $\alpha$-curve, the vertical curve (in blue) is a $\beta$-curve, representing the meridian for the knot in $S^3$, whereas the curve that winds along the handle (in green) is the $\gamma$-curve representing the given framing on the boundary of $S^3\setminus\nu(K)$: basepoints are placed such that $(\Sigma,\ba,\bb,z,w)$ represents $(S^3,K)$, while $(\Sigma,\ba,\bc,z',w)$ represents $(S^3_n(K),\tilde{K})$.}\label{Heddentype}
\end{figure}

\begin{proof}
\textbf{Step 1.} Recall the definition of $\L^-$: given $L\subset (S^3,\xi)$, using an idea of Giroux (\cite{Gi}, see also \cite{Etn}) we can construct an open book $(F,\phi)$ with $L$ sitting on one of the pages (identified with $F$, so that $L\subset F$) as a homologically nontrivial curve. We then choose a basis for $F$ (in the sense of Definition \ref{basis}) with only one arc, say $a_1$, intersecting $L$. We can construct a doubly-pointed Heegaard diagram as we did for the $EH$-diagram, the only thing to take care of being placing the two basepoints (see \cite{LOSS}). A representative for $\L^-$ is now given by the only intersection point entirely supported in $F\subset\Sigma$.

The following lemma is implicitly used by Stipsicz and V\'ertesi \cite{SV}.

\begin{lemma}
The partial open book $(S,P,h) := (F,F\setminus \nu(L),\phi|_P)$ represents the manifold $(S^3_L,\xi|_{S^3_L})$, where $\nu(L)$ is a small neighbourhood of $L$ in $S^3$.
\end{lemma}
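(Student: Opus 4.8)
The plan is to compare, piece by piece, the contact manifold with convex boundary that the partial open book construction produces from $(S,P,h)=(F,F\setminus\nu(L),\phi|_P)$ with $(S^3\setminus\nu(L),\xi|_{S^3\setminus\nu(L)})$, using that $(F,\phi)$ supports $(S^3,\xi)$. One would first check that $(F,F\setminus\nu(L),\phi|_P)$ is a genuine partial open book: picking a handle decomposition of $F$ adapted to $L$, so that $L$ runs once over a single $1$-handle, exhibits the annular neighbourhood $a:=\nu(L)\cap F$ of $L$ as an annulus and $P=F\setminus a$ as a union of $1$-handles attached to it, while $\phi$ fixes a neighbourhood of $\partial F\supset\partial P\cap\partial S$ pointwise; this may require first isotoping $L$ inside $F$, which is harmless.

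I would then set up the smooth picture. Write $S^3\setminus B\cong F\times[0,1]/(x,1)\sim(\phi(x),0)$ for the complement of the binding $B=\partial F$, let $a\subset F$ be the annular neighbourhood of $L$ in the page $F\times\{0\}$, and take $\nu(L)=a\times(-\epsilon,\epsilon)$, chosen so that the framing of $L$ induced by $a$ --- which here is the contact framing, since $L$ sits on a page --- is the page framing. One then obtains
\[
S^3\setminus\nu(L)\ \cong\ \bigl[(F\setminus a)\times[\tfrac12,1]\bigr]\ \cup\ \bigl[F\times[0,\tfrac12]\bigr]\ \cup\ \nu(B),
\]
the two product pieces being glued along $(F\setminus a)\times\{\tfrac12\}$ directly on one side and through $\phi|_{F\setminus a}$ on the other, with $\nu(B)\cong\partial F\times D^2$ a collar of the binding. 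This is exactly the manifold $M$ obtained from $(F,F\setminus\nu(L),\phi|_P)$: the thin product is $P\times[\tfrac12,1]$, the thick product is $F\times[0,\tfrac12]=S\times[0,\tfrac12]$, the relations gluing $P\times\{\tfrac12\}$ to $S\times\{\tfrac12\}$ identically and $P\times\{1\}$ to $S\times\{\tfrac12\}$ through $h$ reproduce the two identifications above, and $(x,t)\sim(x,t')$ for $x\in\partial S$ collapses $\partial F\times[0,\tfrac12]$ in the way that fills in the binding solid torus $\nu(B)$. Under this identification the dividing set $\Gamma$, running along the free boundary of the $1$-handle piece $P\times[\tfrac12,1]$, is carried to a pair of oppositely oriented curves on $\partial\nu(L)$ parallel to $L$, hence parallel to the contact framing, so $(M,\Gamma)\cong S^3_L=S^3_{K,tb(L)}$.

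It then remains to match the contact structures. By construction the partial open book equips each half $S\times[0,\tfrac12]/\!\sim$ and $P\times[\tfrac12,1]/\!\sim$ with the unique tight contact structure with the prescribed convex boundary and dividing curves, and glues them; and $\xi|_{S^3\setminus\nu(L)}$ is tight --- it inherits tightness from $(S^3,\xi)$, tight by Eliashberg --- with $\partial\nu(L)$ convex and divided by $\Gamma_L$. The substance of this step is to check that, under the identification above, $\xi|_{S^3\setminus\nu(L)}$ restricts on the ``thick'' half $F\times[0,\tfrac12]$ to the standard monodromy-free product contact structure over a page of the supported open book --- tight, with exactly the dividing set the construction prescribes --- and on the ``thin'' half $(F\setminus a)\times[\tfrac12,1]$ to the corresponding product with the two dividing curves parallel to $L$ deleted, the deletion being forced by $L$ having page framing equal to contact framing. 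By the uniqueness of tight contact structures on such product and handlebody regions with fixed dividing curves (Honda, Giroux), $\xi|_{S^3\setminus\nu(L)}$ coincides with the partial open book's contact structure on each half, and since both are glued from these halves along $F\setminus a$ with matching dividing data, they agree globally.

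The step I expect to be the main obstacle is the bookkeeping in the second and third paragraphs: checking that the collapsing relation on $\partial S\times[0,\tfrac12]$ reconstructs precisely $\nu(B)$, and not another Dehn filling of the mapping torus, and, correspondingly, that the two sutures appear along the \emph{contact} framing of $L$. This is exactly where the hypothesis that $L$ lies on a page of $(F,\phi)$ --- so that its page framing is its contact framing --- enters, and it is what singles out $S^3_L=S^3_{K,tb(L)}$ among the $S^3_{K,n}$. One also has to be slightly careful about which uniqueness statement for tight contact structures on product regions is being used, but these are by now standard.
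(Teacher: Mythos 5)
Your proposal is correct and follows essentially the same route as the paper: the paper's (much terser) proof likewise identifies the two halves of the partial open book construction with the two halves of $S^3$ cut along pages of $(F,\phi)$, "respecting the foliation", which is exactly your matching of the thick piece $S\times[0,\tfrac12]$ and thin piece $P\times[\tfrac12,1]$ with the corresponding regions of $S^3\setminus\nu(L)$ together with the comparison of dividing data and the tightness/uniqueness argument on each half. Your extra bookkeeping (page framing $=$ contact framing, hence sutures of slope $tb(L)$; uniqueness of the tight structure on each half, which is the same uniqueness the paper invokes in its description of the partial open book construction) only makes explicit what the paper leaves implicit.
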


\begin{proof}
The contact sutured manifold $(M,\Gamma)$ associated to $(S,P,h)$ embeds in $S^3$ as the complement of a small neighbourhood of $L$, since we can embed the two halves of $M$ inside the two halves of $S^3$ given by $(F,\phi)$, respecting the foliation: this shows that $(M,\Gamma)$ is contactomorphic to $S^3_L$.
\end{proof}

\begin{rmk}
We can read off a sutured Heegaard diagram associated to $(S,P,h)$ directly from the doubly-pointed Heegaard diagram for $(S^3,L)$: we just need to remove the basepoints, together with a (small, open) neighbourhood of $L$ in the Heegaard surface, and erase the two curves corresponding to $a_1$ and $b_1$. The remaining $a_i$'s form a basis for the $(S,P,h)$, so the $EH$ invariant is already on the picture.

If we also want to have an arc diagram for $S^3_L$, we can to do the following. We start with the doubly-pointed Heegaard diagram, and replace the curve $\beta_1$ with a curve $\lambda$ parallel to $L$. Then we add a disc $D$ along it this curve, that is disjoint from $\alpha_1$ and lies in the two regions that are occupied by the basepoints. Finally, we just forget about the basepoints and let $\beta^c_1$ be the arc with endpoints in $D$ that runs along $\lambda$. Notice that this arc arc intersects a single $\alpha$-curve (namely, $\alpha_1$) exactly once.

Notice that in this case the chain complexes associated to the sutured Heegaard diagram and the arc diagram are trivially isomorphic (as chain complexes), since they have exactly the same generators and count precisely the same curves (since the arc intersects only $\alpha_1$, and in a single point).
\end{rmk}

\begin{figure}[!ht]
\begin{center}
\includegraphics[scale=0.75]{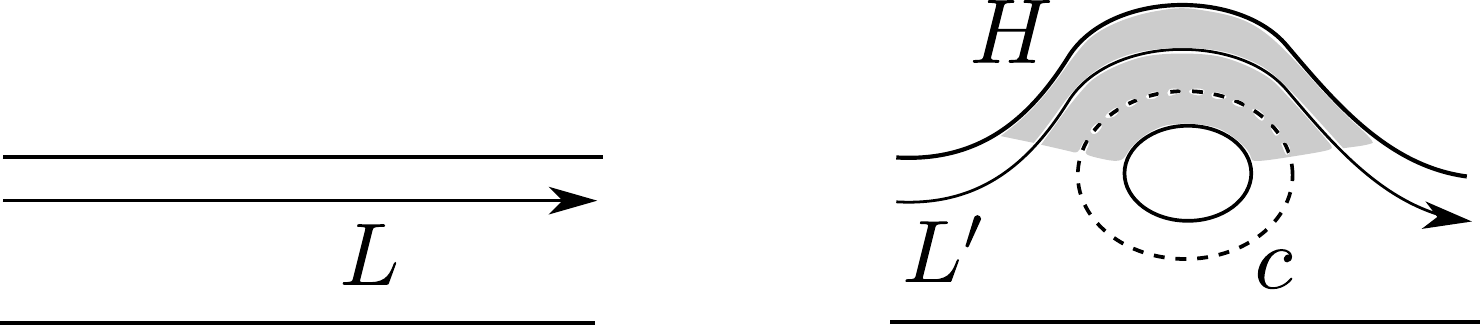}
\end{center}
\caption{On the left we have a 1-handle of the page of an open book for $(Y,\xi,L)$, where the arrow represents $L$. On the right we have the page with the additional handle $H$ (shaded), the curve $c$ along which we perform a positive Dehn twist; the arrow represents $L'$ (which otherwise agrees with $L$).}\label{stab_fig}
\end{figure}

We now want to know what happens to this picture when we stabilise $L$ negatively to get $L'$. If $L$ sits on a page $F$ of the open book $(F,\phi)$, $L'$ sits on a page of the open book $(F',\phi') = (F\cup H, \phi\circ \delta_c)$, where $H$ is a 1-handle attached to the boundary of $F$ as in Figure \ref{stab_fig} and $\delta$ is a positive Dehn twist along the curve $c$, dashed in the figure. $L'$ is isotopic to $L$ inside $F$, except that it runs once along the handle \cite{On}.

Let's see what happens at the level of arc diagrams: recall that the invariant $EH(L')$ is represented by a chain $\x_{EH}$ in the arc diagram $\H^a = (\Sigma,\beta^a,\bb^c,\ba)$ coming from the open book $(F',\phi')$ together with the embedding $L'\subset F'$. In particular we have that $\Sigma = F'\cup -F'$ and $D\cup\beta^a \subset F' \subset \Sigma$. Call $g+1$ the genus of $\Sigma$; the $\alpha$-curves are obtained after choosing a basis $\{a_0,a_1,\dots,a_g\}$ for $F$. We choose this basis so that $a_0$ is the co-core of the handle $H\subset F'$, and is the only arc intersecting $L'$ inside the page, and $a_1$ is the only other arc intersecting the curve $c$ above (this is always possible). Finally, we let $\beta_0 = \beta^a$ the arc that runs parallel to $L'$ inside $F'$, $\alpha_0 = a_0\cup -a_0$, which is the only curve that intersects $\beta_0$, $\alpha_1 = a_1\cup -a_1$, and we number the remaining curves so that $\alpha_i$ and $\beta_i$ intersect once inside $F'$. Recall that $\x_{EH}$ is the generator consisting of all the intersection points $x_i = \alpha_i\cap\beta_i$ inside $F'$.

\vskip 0,3 cm

\textbf{Step 2.} We now want to attach bypasses to the sutured knot complement $S^3_{L'}$ and compute the associated gluing maps, as indicated in Theorem \ref{Jakebypassprop}.

When we stabilise $L'$ we attach a bypass to the sutured knot complement, and the framing of the sutures decreases by 1. We're going to obtain an arc diagram $(\Sigma,\gamma^a,\bc^c,\ba)$ for a stabilisation $L''$ of $L'$ by attaching a bypass to the sutured knot complement $S^3_{L'}$ (see \cite{SV}): the Heegaard surface $\Sigma$ and the curves $\ba$ are the same as in $\H^a$; also, $\bc^c = \bb^c$. The curve $\gamma_0$ is obtained by juxtaposing $\beta_0$ and $\mu$ as in Figure \ref{stab_3HD}, where $\mu$ is the meridian of $L\subset S^3$. Notice that $\mu$ can be obtained by taking $a_0 = \alpha_0\cap F'$ and letting $\mu = a_0 \cup -\phi'(a_0)$; in other words, $\mu$ is the curve $\beta_0$ in the doubly-pointed Heegaard diagram of representing $HFK^-(-S^3,L)$. Observe also that the arc $\gamma_0$ intersects $\beta_0$ transversely in a single point, $\theta_0$.

\begin{figure}
\begin{center}
\includegraphics[scale=0.8]{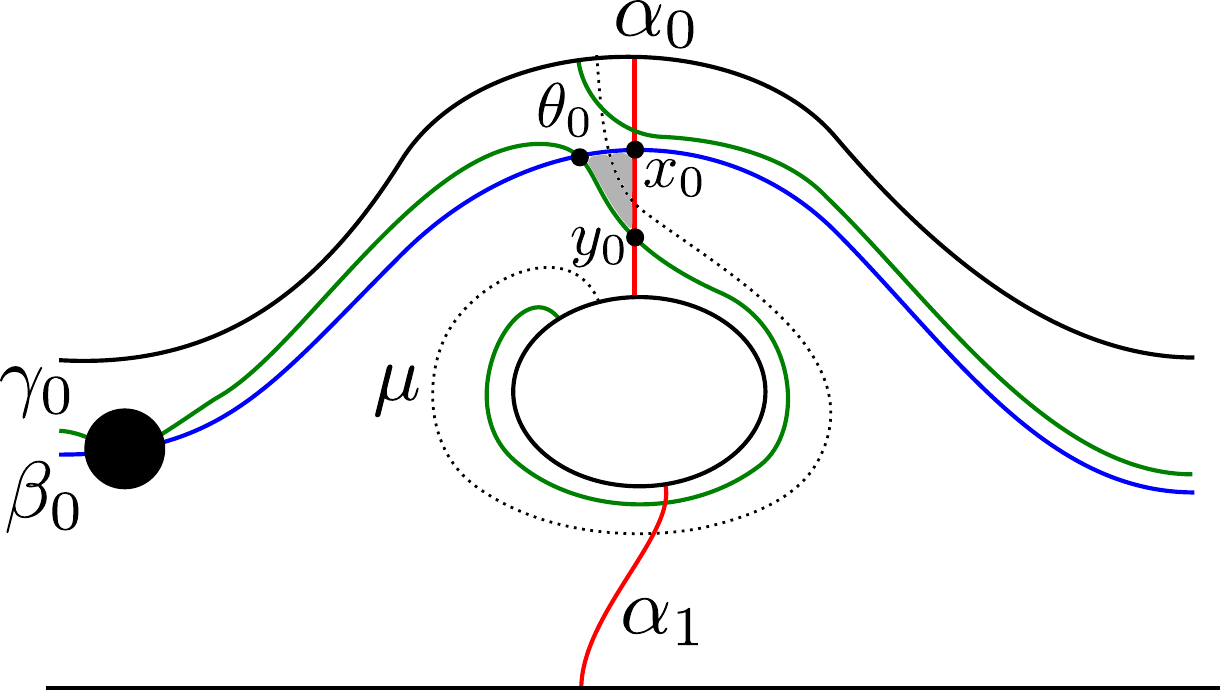}
\end{center}
\caption{In this picture, we represent $F'$ together with a small neighbourhood of $c$ inside $-F'$; the dashed curve represents the meridian $\mu$ for $L'\subset S^3$, and $\gamma_0$ is obtained from $\beta_0$ through a right-handed Dehn twist along $\mu$. We omit the curves $\beta_1$ and $\gamma_1$ to avoid cluttering the picture.}\label{stab_3HD}
\end{figure}

We're now ready to compute the action of the bypass map on $\x_{EH}$; we're going to denote the bypass maps induced by negative stabilisations $\sigma_-$. In order to be able to do a triangle count, we need to perturb the $\beta$-curves to obtain curves $\gamma_1,\dots,\gamma_g$. We choose the perturbations so that $\gamma_i$ has the following two properties:
\begin{itemize}
\item it intersects $\beta_i$ transversely in two points, both inside $F'$ and separated along $\beta_i\cap F'$ by $\alpha_i$ (see Figure \ref{outside_wr1});
\item it intersects $\alpha_i\cap F'$ transversely in a single point $y_i$.
\end{itemize}

The two intersection points of $\beta_i$ and $\gamma_i$ are connected by a bigon $B$ inside $F$. We label them $\theta_i$ and $\theta_i'$ so that this a $B$ connects $\theta_i'$ \emph{to} $\theta_i$. Notice that this is the opposite of the usual convention for triangle counts (see \ref{beta} above). We let $\bfTheta = \{\theta_i\}$.

We're going to do a triangle count in $(\Sigma,\bb,\ba,\bc,D)$; in the notation of \ref{beta} above, the map $\sigma_-$ is induced by $F_- = F_{\beta\alpha\gamma}(\cdot \otimes\bfTheta)$. Let $F_-(\x_{EH}) = \sum_{k=1}^n \y_k$, where all summands are distinct (such a representation exists and is unique up to permutations, since we're working with coefficients in $\F$).

\begin{lemma}\label{positive_is}
For each $k=1,\dots, n$, the intersection point of $\y_k$ along $\alpha_i$ for $i>0$ is $y_i$.
\end{lemma}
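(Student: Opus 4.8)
The plan is to run a triangle-count argument that is local to the handles containing the curves $\alpha_i$, $\beta_i$, $\gamma_i$ for $i>0$, away from the winding region and the handle $H$. The key point is that the perturbation of the $\beta$-curves was arranged (in Step 2) so that, inside $F'$, the curve $\gamma_i$ meets $\alpha_i$ in a \emph{single} point $y_i$ and meets $\beta_i$ in exactly two points $\theta_i, \theta_i'$, all three intersections sitting in a standard model configuration inside a neighbourhood of $x_i = \alpha_i \cap \beta_i$. So the statement is really a claim about which intersection point of $\alpha_i$ with the $\gamma$-curves can appear in a holomorphic triangle contributing to $F_- = F_{\beta\alpha\gamma}(\cdot\otimes\bfTheta)$.

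First I would set up the domain bookkeeping. A triangle $\psi\in\pi_2(\x_{EH},\y_k,\bfTheta)$ contributing to $\sigma_-$ satisfies the boundary conditions B1--B3 of Subsection \ref{beta}: $\de\de_\alpha\D(\psi) = \x_{EH}-\y_k$, $\de\de_\beta\D(\psi) = \bfTheta_{\gamma\beta}-\x_{EH}$ (where the relevant $\Theta$-component on $\beta_i$ is $\theta_i$), and $\de\de_\gamma\D(\psi)=\y_k-\bfTheta_{\gamma\beta}$. Fix $i>0$. Restricting attention to the handle carrying $\alpha_i,\beta_i,\gamma_i$, the $\alpha_i$-boundary of the domain runs from the $\alpha_i$-coordinate of $\x_{EH}$, namely $x_i$, to the $\alpha_i$-coordinate of $\y_k$; the $\beta_i$-boundary runs from $\theta_i$ to $x_i$; and the $\gamma_i$-boundary runs from (the $\gamma_i$-coordinate of $\y_k$) to $\theta_i$. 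Since $\alpha_i\cap\gamma_i = \{y_i\}$ inside $F'$ by construction, the only candidate for the $\gamma_i$-coordinate — hence for the $\alpha_i$-coordinate of $\y_k$ along $\alpha_i$ — is $y_i$, provided I can rule out the $\alpha_i$-boundary, the $\beta_i$-boundary or the $\gamma_i$-boundary of $\D(\psi)$ leaving this handle. That exclusion is the substance of the argument: one shows that any domain whose $\alpha_i$- or $\gamma_i$-boundary exits the handle either has positive multiplicity at $D$ (so it is not counted) or has negative local multiplicity somewhere (so it is not a non-negative domain, hence carries no holomorphic representative), using that $D$ is positioned in $F'$ relative to these curves exactly as $z$ and $w$ were in the doubly-pointed picture. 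In the standard local model the only non-negative domain with the prescribed corners and vanishing $D$-multiplicity is the small triangle with vertices $x_i,\theta_i,y_i$, which forces the claim.

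So the main steps, in order, are: (1) record the local model for the triple $(\alpha_i,\beta_i,\gamma_i)$ near $x_i$ and the placement of $D$; (2) write the boundary/corner constraints B1--B3 restricted to this handle; (3) show every domain satisfying them with $n_D(\psi)=0$ is supported in the handle and is the obvious small triangle, so the $\alpha_i$-coordinate of every $\y_k$ is $y_i$; (4) observe this is independent of $k$ and of the global behaviour in the winding region and in $H$, since those regions are disjoint from the handles indexed by $i>0$.

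The hard part will be step (3): controlling the domain globally, i.e. proving that a contributing domain cannot "leak" out of the $i$-th handle into the rest of $\Sigma$ (the winding region, the handle $H$, or the other $\alpha_j,\beta_j,\gamma_j$ handles) while keeping $n_D=0$ and non-negativity. This requires a careful multiplicity argument tying the region around $D$ to the $\beta$- and $\gamma$-boundaries — essentially the same kind of positivity-of-domains analysis that underlies Hedden's winding-region computations \cite{He}, localized here to the stabilisation handle. Once that is in place the conclusion is immediate, and the same bookkeeping will feed directly into the subsequent lemmas identifying the full generator $\y_k$ and comparing $\H^{stab}$ with $\H^{knot}$.
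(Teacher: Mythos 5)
Your set-up (the corner constraints B1--B3, the local model near $x_i$, and the goal of showing that the local part of the domain is the small triangle with vertices $x_i,\theta_i,y_i$) matches the structure of the paper's argument, but the step you defer as ``the hard part'' is precisely the content of the lemma, and your sketch of it does not close the gap. Two specific problems. First, the reduction ``$\alpha_i\cap\gamma_i=\{y_i\}$ inside $F'$, hence $y_i$ is the only candidate'' is not available a priori: the $\alpha_i$-coordinate of $\y_k$ could lie on a different $\gamma_j$, or on $\gamma_i$ outside $F'$ (the $\gamma$-curves are global perturbations of the $\beta$-curves, which in general cross many $\alpha$-curves through the monodromy), so ruling these out is exactly what has to be proved and cannot be done by counting intersections inside $F'$. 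Second, the mechanism you propose for the exclusion (positive multiplicity at $D$ or ``negative local multiplicity somewhere'', by analogy with Hedden's winding-region computations) is left unjustified and is not what makes the argument work; nor is a global ``no-leak'' statement needed or even true --- the domain does extend over the rest of $\Sigma$, and its behaviour near $\alpha_0$ requires the separate homological argument of Lemma \ref{xzero}.

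What is missing is the concrete geometric observation that powers the local argument: the two regions adjacent to the local picture around $x_i$ (to the left of $x_i$ and to the right of $y_i$ in Figure \ref{outside_wr1}) are the \emph{same} component of $\Sigma\setminus(\ba\cup\bb\cup\bc)$, and this component touches the disc $D$. This follows because the arcs $a_i=\alpha_i\cap F'$ do not disconnect $F'$, the arc $\beta_0$ is entirely contained in $F'$ and is disjoint from every $\alpha_i$ with $i>0$, and $\gamma_0\cap F'$ consists of two arcs running along $\beta_0$ except near $\alpha_0$. Since contributing triangles have vanishing multiplicity at $D$, the multiplicity of $\D(\psi)$ on that region is $0$; vanishing to the left of $x_i$ forces the corner of $\D(\psi)$ at $x_i$ to be the acute corner contained in the small shaded triangle, and vanishing to the right of $y_i$ then forces a corner at $y_i$, so the $\alpha_i$-coordinate of $\y_k$ is $y_i$ and the domain is locally the small triangle. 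Without this identification of the two adjacent regions with a single $D$-touching region, your step (3) remains an unproved assertion rather than a proof.
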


\begin{figure}[ht]
\begin{center}
\includegraphics[scale=0.8]{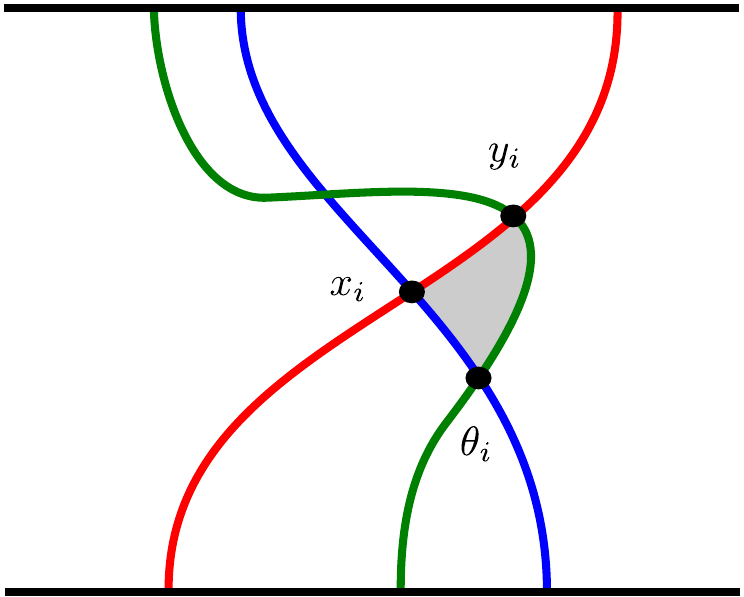}
\end{center}
\caption{This picture represents a neighborhood of $\alpha_i$ in $F'$, for some positive $i$.}\label{outside_wr1}
\end{figure}

\begin{proof}
Let $\psi$ be a holomorphic triangle contributing to the summand $\y_j$ in $\sigma_-(\x) = \sum \y_k$.

Let's consider $F'\subset \Sigma$ in a neighbourhood of $\alpha_i$ containing also $\beta_i$ and $\gamma_i$. The arcs $a_i=\alpha_i\cap F'$ for $i\ge0$ don't disconnect $F'$ by construction; moreover, the arc $\beta_0$ is entirely contained in $F'$ and does not meet any $\alpha_i$ for $i>0$, while $\gamma_0\cap F'$ is made of two arcs that run along $\beta$ except near $\alpha_0$. It follows that the two unbounded regions to the left and right of Figure \ref{outside_wr1} are in fact the same region, which touches the disc $D$. Therefore, the multiplicity of $\D(\psi)$ on this regions is 0.

Since the multiplicity at the left of $x_i$ has to vanish, the corner of $\D(\psi)$ at $x_i$ is acute and is contained in the small triangle, shaded in the picture. Since the multiplicity at the right of $y_i$, too, vanishes, there has to be a corner at $y_i$, to, and in particular the $\alpha_i$-component of $\y_j$ has to be $y_i$. Moreover the domain $\D(\psi)$ has to be a small triangle in the pictured region.
\end{proof}

We now look at the intersection points of $\alpha_0\cap\gamma_0$. Let's call $y_0$ the first intersection point of $\gamma_0$ and $\alpha_0$ we meet when we travel along $\gamma_0$ starting from $D$ and going in the direction of $\theta_0$.

\begin{lemma}\label{xzero}
For each $k=1,\dots, n$, the intersection point of $\y_k$ along $\alpha_0$ is $y_0$.
\end{lemma}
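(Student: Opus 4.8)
The proof of Lemma~\ref{xzero} will be completely parallel to that of Lemma~\ref{positive_is}, but carried out in the winding-region-like handle of $F'$ where $\alpha_0$, $\beta_0 = \beta^a$ and $\gamma_0 = \gamma^a$ all live. The strategy is to use the boundary conditions B1--B3 from Subsection~\ref{beta} on a holomorphic triangle $\psi$ contributing a summand $\y_j$ to $F_-(\x_{EH})$, together with the fact that $\D(\psi)$ must avoid the disc $D$, to pin down the $\alpha_0$-coordinate of $\y_j$ by a local multiplicity argument near $D$ and near the relevant intersection points of $\gamma_0$ with $\alpha_0$.

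First I would fix notation for the handle containing $\alpha_0$, $\beta_0$ and $\gamma_0$, drawing on Figure~\ref{stab_3HD}: the arc $\beta_0$ runs parallel to $L'$ inside $F'$, meeting $\alpha_0$ in the single point $x_0$; the arc $\gamma_0$ is $\beta_0$ Dehn-twisted once around the meridian $\mu$, so it meets $\alpha_0$ in at least the two points near where $\mu$ crosses $\alpha_0$, one of which is $y_0$ (the first one reached from $D$ heading toward $\theta_0$). I would note that the endpoints of both arcs $\beta_0, \gamma_0$ lie on $\partial D$, so the region of $\Sigma \setminus(\ba \cup \bc)$ adjacent to $D$ has multiplicity $0$ in $\D(\psi)$ by the definition of $SFC$ for an arc diagram. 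Then, as in Lemma~\ref{positive_is}, I would trace the boundary of $\D(\psi)$ along $\gamma_0$ starting from $\theta_0$ (condition B3 forces the boundary to run from $\theta_0$ toward the $\y_j$-corner on $\gamma_0$) and show that the multiplicity constraint coming from $D$ forces the only way to close up the domain is for this corner to sit at $y_0$; any other intersection point of $\gamma_0 \cap \alpha_0$ would force a nonzero multiplicity in the region touching $D$, either because one would have to cross $\mu$ an extra time or because the domain would be forced to wrap around the handle.

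Concretely, the argument is: conditions B2--B3 say $\partial\partial_\beta \D(\psi) = \bfTheta - \x_{EH}$ along the $\beta_0$-arc and $\partial\partial_\gamma \D(\psi) = \y_j - \bfTheta$ along the $\gamma_0$-arc, so near the handle the boundary of $\D(\psi)$ enters at $\theta_0$ along $\gamma_0$, must leave $\gamma_0$ at the $\alpha_0$-coordinate of $\y_j$, and B1 says it runs from $x_0$ to that coordinate along $\alpha_0$. Since the boundary segment of $\beta_0$ from $x_0$ to $\theta_0$ is fixed, the resulting domain, read locally, is a bigon-plus-triangle whose only embedded representative avoiding $D$ has its $\gamma_0$-corner at $y_0$: the ``first'' intersection point encountered from $D$. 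I expect the main obstacle to be a careful check that no competing domain, winding once around the handle (i.e., picking up the Dehn-twist region), can contribute; this is exactly where one uses that $\mu$ separates $x_0$ from $y_0$ only in the prescribed way and that such a winding domain would necessarily have positive multiplicity in the region abutting $D$. Once that case is excluded, the identification of the $\alpha_0$-coordinate with $y_0$ is forced, completing the proof.
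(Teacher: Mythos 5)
Your reduction of the problem is right (the only undetermined coordinate of $\y_k$ lies on $\alpha_0\cap\gamma_0$, the $\beta$-boundary near the handle is pinned down by the multiplicity-zero condition at the region meeting $D$, and the small triangle through $\theta_0$, $x_0$, $y_0$ shows $y_0$ is a genuine candidate), but the crucial step is missing. You assert -- and indeed explicitly flag as the ``main obstacle'' -- that a holomorphic triangle whose $\gamma_0$-corner sits at an intersection point other than $y_0$ would necessarily have positive multiplicity in the region abutting $D$, and that ``read locally'' the domain must be a bigon-plus-triangle. Neither claim is justified. The locality that made the proof of Lemma \ref{positive_is} work came from a specific geometric coincidence there: the two apparently unbounded regions in the local picture around $\alpha_i$ are one and the same region touching $D$, which forces acute corners and a small triangular domain. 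No analogous statement is established in the handle $H$: a domain contributing to the triangle count is only a nonnegative combination of regions of the triple diagram, it need not be embedded, and it is free to leave the winding region and return through the rest of the genus $g+1$ surface, so a purely local multiplicity count near $D$ does not by itself exclude corners at the farther points of $\gamma_0\cap\alpha_0$.

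The paper closes exactly this gap with a global homological argument rather than a local one. After forcing $\de_\beta\D(\psi)$ to be the short arc in $H$ (because its complement in $\beta_0$ meets $D$), one takes a small push-off $a$ of $\alpha_0$ disjoint from $\alpha_0$ and from that short arc; since $\de\D(\psi)$ is nullhomologous, $a\cdot\de\D(\psi)=0$, and since $a$ misses the $\alpha$- and $\beta$-boundaries this says $a\cdot\de_\gamma\D(\psi)=0$. If the $\gamma_0$-corner were at any intersection point other than $y_0$, the $\gamma$-boundary would be homologous to a combination of $\alpha_0$ and the meridian $\mu$ with nonzero $\mu$-coefficient, contradicting $|a\cap\mu|=1$. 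If you want to complete your proof you either need to import this intersection-number argument (or an equivalent winding-number count), or genuinely prove the claim that every competing domain, local or not, has positive multiplicity next to $D$; as written, that claim is an expectation, not an argument, and it is precisely the content of the lemma.
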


\begin{proof}
The remaining intersection point of $\y_k$ lies on $\alpha_0$ and $\beta_0$, since all other curves already have an intersection point on them. Notice also that there's a small triangle connecting $\theta_0$, $x_0$ and $y_0$, so that $\overline{\y} = \{y_0,\dots,y_g\}$ does in fact appear in the sum. See Figure \ref{stab_3HD}

Consider now a holomorphic triangle $\psi$ and its domain $\D = \D(\psi)$. $\de_\beta\D$ has to be the the short arc connecting $x_0$ and $y_0$ in the handle $H$, since the complement of this arc touches the base-disc $D$. Consider a small push-off $a$ of $\alpha_0$ disjoint from this arc and from $\alpha_0$ itself. Observe that $\de\D$ is nullhomologous and $\de\D \cap a = \de_\gamma\D \cap a$. Therefore, $a$ has to have trivial algebraic intersection with the $\gamma$-boundary of $\D$. Suppose that $\D$ connects $x_0$ with another intersection of $\gamma_0$ with $\alpha_0$: its $\gamma$-boundary $\de_\gamma\D$ is homologous to a linear combination of $\alpha_0$ and the meridian $\mu$, where $\mu$ appears with nonzero multiplicity. In particular, this contradicts the fact that $a$ intersects $\de_\gamma\D$ trivially, since $|a\cap\mu| = 1$.
\end{proof}

In particular, all $\y_k$s are equal, therefore $EH(L'') = \sigma_-(EH(L')) = [\overline{\y}]$.

We want to iterate the procedure, and stabilise $L''$. The bypass we need to attach only modifies $\gamma_0$ by juxtaposition with $\mu$, and in particular Lemma \ref{positive_is} holds in this case as well. Notice also that the only thing we used in proving Lemma \ref{xzero} is that $x_0$ and $y_0$ were the first intersection points of $\alpha_0$ with the arcs $\beta_0$ and $\gamma_0$ respectively, so -- up to notational modifications -- Lemma \ref{xzero} holds for iterations of bypass attachments.

In particular, we've computed the action of $\sigma_-^n$ on $EH(L')$ for every $n\ge 0$.

\vskip 0,3 cm

\textbf{Step 3.} We now slide $\alpha_1$ over $\alpha_0$ to obtain $\alpha_1'$. Recall that $a_i = \alpha_i\cap F'$ intersects the curve $c$ that we used to stabilise the open book $(F,\phi)$ only if $i=0,1$. In particular, $\alpha_1'$ is disjoint from $\mu$ and the only $\alpha$-curve that intersects $\mu$ is $\alpha_0$. Call $\H^{final}$ the Heegaard diagram $(\Sigma, \bb,\ba',D)$.

We're going to compute the action of the map $HS$ induced by this handleslide on the contact invariant. Let $(\Sigma,\bb,\ba,\ba')$ be the triple Heegaard diagram associated to the handleslide, and $\x_{EH}$ be the contact invariant as computed in the previous step and $\y_c$ be the intersection point in $\H^{final}$ that is closest to $\x_{EH}$ (see below for a more precise description).

\begin{lemma}
The handleslide map $HS$ sends $\x_{EH}$ to $\y_c$.
\end{lemma}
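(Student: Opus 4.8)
First I would identify the map. Since we are sliding $\alpha$-curves and $\x_{EH}$ is a generator of $SFC(\Sigma,\bb,\ba)$, the relevant model is the one of Subsection \ref{alpha}, so $HS=\Psi_{\alpha\alpha'}=F_{\alpha\alpha'}(\,\cdot\otimes\bfTheta_{\alpha\alpha'})$ is computed from the triple diagram $(\Sigma,\bb,\ba,\ba')$, with the boundary behaviour (A1)--(A3). Here $\ba'=\{\alpha_0,\alpha_1',\alpha_2,\dots,\alpha_g\}$, where $\alpha_1'$ is the band sum of $\alpha_1$ with a parallel copy of $\alpha_0$ (and each pair $\alpha_i,\alpha_i'$ is perturbed to meet in a cancelling pair of points), and $\bfTheta_{\alpha\alpha'}$ is the single generator representing the top class of $HF(\ba,\ba')$, picking out the ``nearest'' point $\theta_i'\in\alpha_i\cap\alpha_i'$. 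With this set up, $\y_c$ is the generator of $\H^{final}$ whose component on $\alpha_i'$ equals the $\alpha_i$-component of $\x_{EH}$ for every $i\neq1$ (so $y_0$ on $\alpha_0$, since $\alpha_0$ is not slid, and $y_i$ on $\alpha_i$ for $i>1$), and whose component on $\alpha_1'$ is the intersection point of $\alpha_1'$ with $\beta_1$ closest to the old $\alpha_1$-component of $\x_{EH}$.

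Next I would exhibit the \emph{small} triangle realising $\y_c$: there is $\psi_0\in\pi_2(\x_{EH},\y_c,\bfTheta_{\alpha\alpha'})$ of Maslov index $0$ whose domain $\D(\psi_0)$ is a disjoint union of embedded triangles --- one near each $x_i$ with $i\neq1$, and one in the band region recording the slide of $\alpha_1$ over $\alpha_0$ --- which does not touch $D$ and has a unique holomorphic representative. Hence $\y_c$ appears in $HS(\x_{EH})$ with multiplicity one.

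Then I would rule out every other contribution, region by region, exactly as in Lemmas \ref{positive_is} and \ref{xzero}. Let $\psi$ be any triangle of Maslov index $0$ that does not touch $D$ and contributes a summand $\y$ to $HS(\x_{EH})$. For $i\neq0,1$, the two regions flanking $\alpha_i$ in the page $F'$ are identified and touch $D$ (since the arcs $a_i$ are non-separating in $F'$ and no $\alpha_i$ with $i>0$ meets $\beta_0$), so, as in Lemma \ref{positive_is}, $\D(\psi)$ is a small triangle near $x_i$ and the $\alpha_i'$-component of $\y$ is $y_i$. For $i=1$, since after the slide $\alpha_1'$ is disjoint from $\mu$ and meets $\beta_1$ only near the old point, the same reasoning fixes the $\alpha_1'$-component of $\y$ to its value in $\y_c$. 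For the $\alpha_0$-component, the argument of Lemma \ref{xzero} applies verbatim, with $\alpha_0'$ playing the role of $\gamma_0$ and $\mu$ still the meridian: $\de\D(\psi)$ restricted to $\beta_0$ must be the short arc between the intersection points of $\alpha_0$ with $\beta_0$ and of $\alpha_0'$ with $\beta_0$ met first from $D$, and a push-off $a$ of $\alpha_0$ disjoint from that arc, together with $|a\cap\mu|=1$, prevents $\D(\psi)$ from winding around the handle; hence the $\alpha_0$-component of $\y$ is $y_0$ and $\D(\psi)$ is small near the handle. Together these force $\y=\y_c$ and $\D(\psi)=\D(\psi_0)$, whence $HS(\x_{EH})=\y_c$.

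The step I expect to be the main obstacle is this last one, i.e.\ controlling the triangles near the winding region and near the band defining $\alpha_1'$: a priori there could be ``long'' domains that travel around the handle (recall that the framing arc was wound $n\gg0$ times around $\mu$ in Step 2). As in Hedden's winding-region analysis \cite{He} and in the proof of Lemma \ref{xzero}, the tool that kills these is the embedded disc $D$: each region such a long triangle would be forced to cover is connected through $F'$ to $D$, where the local multiplicity vanishes. Verifying that these regions really reach $D$ is precisely where the careful choices made in constructing $\H^{stab}$ --- $a_0$ the co-core of the stabilising handle, $a_1$ the only other arc meeting $c$, all the $a_i$ non-separating in $F'$ --- are used.
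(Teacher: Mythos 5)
Your proof follows essentially the same route as the paper's: identify $HS$ with the triangle count $\Psi_{\alpha\alpha'}$ for the triple diagram $(\Sigma,\bb,\ba,\ba')$ with the conventions of Subsection \ref{alpha}, exhibit the small triangles hitting $\y_c$, and rule out every other contribution by forcing zero multiplicity on regions that touch the disc $D$ (as in Lemma \ref{positive_is}), with the push-off-of-$\alpha_0$/meridian homological argument of Lemma \ref{xzero} handling the winding region, which is exactly how the paper argues. One minor caveat: your parenthetical claim that $\alpha_1'$ meets $\beta_1$ only near the old intersection point need not hold (since $\beta_1$ may run over the stabilising handle and thus meet the parallel copy of $\alpha_0$ in the band sum), but it is not load-bearing --- the corner-at-$x_1$ region touching $D$, which you also invoke, is what actually pins down the $\alpha_1'$-component, as in the paper.
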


\begin{proof}
As above, let $HS(\x_{EH}) = \sum \y_k$ where all summands are distinct.

\begin{figure}[ht]
\begin{center}
\includegraphics[scale=1]{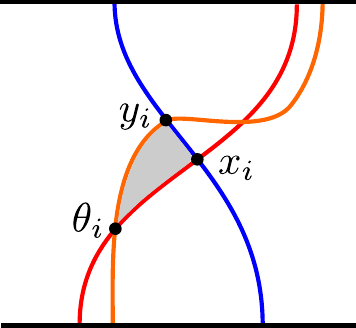}
\end{center}
\caption{This picture represents a neighborhood of $\alpha_i$ in $F'$, for some positive $i$.}\label{outside_wr2}
\end{figure}

On all curves other than $\alpha_0$ and $\alpha_1$ the same argument as in Lemma \ref{positive_is} applies with no modification (see \ref{alpha} for the orientation issues): Figure \ref{outside_wr2} represents what happens locally around $x_i$ and is obtained from Figure \ref{outside_wr1} through a rotation by 180 degrees.

\begin{figure}[ht]
\begin{center}
\includegraphics[scale=0.8]{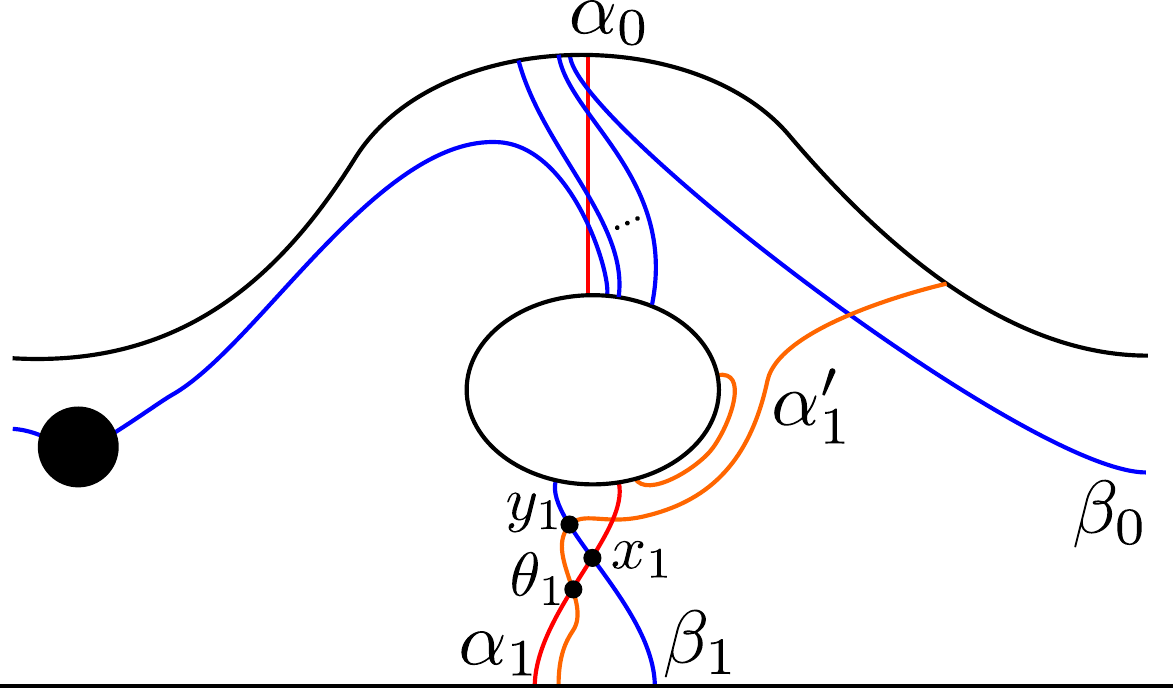}
\end{center}
\caption{This picture represents a neighborhood of the sliding region between $\alpha_0$ and $\alpha_1$ in $F'$}\label{alpha_slide}
\end{figure}

In fact, the same argument applies to the triple $\beta_1, \alpha_1, \alpha_1'$: looking at Figure \ref{alpha_slide}, we see that for every $\y_k$ in the sums the intersection point on $\alpha_1'$ is the intersection of $\alpha_1'$ and $\beta_1$ on $F'$. First of all, there is a small triangle $T_1$ connecting $x_1$ to $y_1$ inside $F'$. To prove that there can be no other domain, we observe that the multiplicity has to vanish in the corner at $x_1$ across from $T_1$, since this region touches the disc $D$. On the other hand, this is enough for the proof of Lemma \ref{positive_is} to work.

Finally, we take care of the intersection point of $\beta_0\cap\alpha_0'$, that is the first intersection point when moving from the $D$ along $\beta_0$, traversing the handle $H$ first. This is similar to the proof of Lemma \ref{xzero} above, and it follows from the same homological considerations.
\end{proof}

\begin{figure}
\begin{center}
\includegraphics{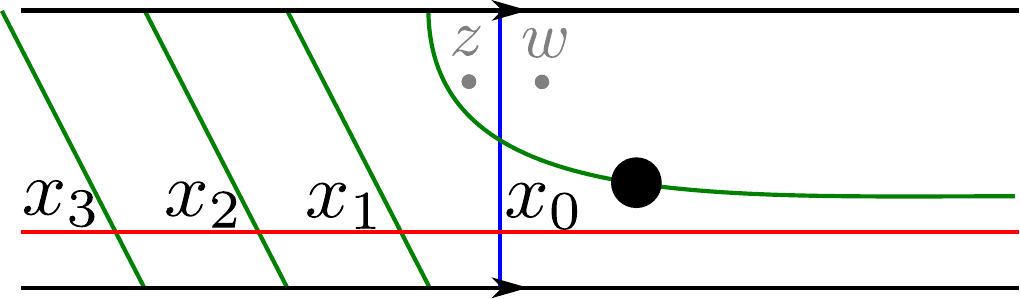}
\caption{The neighbourhood of $\mu_L$ in $\H^{final}$. The twisting is all on one side of $\mu_L$ (the vertical curve). The intersection points on $\alpha'_0$ (the horizontal curve) are labelled $x_0, x_1, \dots$ from right to left. We also put the basepoints $z$ and $w$ (in gray) to represent the knot $K\subset S^3$.}\label{half_winding}
\end{center}
\end{figure}

Observe that a neighbourhood of the meridian $\mu_L$ of $L\subset S^3$ in the diagram looks like \emph{half} of the winding region, as in Figure \ref{half_winding}. Call $x_0$ the intersection point of $\mu_L$ with $\alpha_0'$, and number the intersection points of $\alpha_0'$ with $\beta_0$ as $x_1,x_2,\dots$ according to the order in which we meet them when travelling along $\alpha'_0$ (so that $x_0$ comes first). An easy adaptation of the proof of Theorem 4.1 in \cite{He} shows the following:

\begin{prop}
All generators in $\H^{final}$ with sufficiently large Alexander degree have an intersection point in the winding region.

Moreover, the map $\Phi: SFC_{A\ge N}(-S^3_{L^{stab}})\to CFK^-_{A\ge N'}(S^3,K)$ defined by $\Phi(\{x_n\}\cup\x) = U^n\cdot (\{x_0\}\cup\x)$ induces an isomorphism of chain complexes when $N$ is sufficiently large and $N' = N+\frac{tb(L^{stab})+1}2$.
\end{prop}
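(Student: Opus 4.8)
The statement is, as the authors say, a refinement of Hedden's winding-region computation \cite[Theorem 4.1]{He} carried out in the arc-diagram setting, so the plan is to follow that strategy in three movements, the last of which is the real content.

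First I would prove the support statement. For a generator $\y$ of $\H^{final}$ I would compute how its Alexander grading depends on the coordinate of $\y$ lying on $\alpha_0'$ inside the half-winding region of Figure \ref{half_winding}. For each $n\ge 1$ there is an embedded bigon joining $x_n$ to $x_{n-1}$ that crosses exactly one of the two basepoints $z,w$ recording $K\subset S^3$; feeding this into the formula for the relative $\spin$ structure behind Equation \ref{defAlex}, replacing $x_{n-1}$ by $x_n$ in a generator changes its Alexander grading by a fixed nonzero amount. Since the rest of $\H^{final}$ is a fixed finite diagram, generators whose $\alpha_0'$-coordinate is not one of the $x_n$ have Alexander grading bounded above; hence for $N$ large every generator with $A\ge N$ uses exactly one $x_n$, with $n=n(\y)$ growing linearly in $A(\y)$, and by Step 3 the curve $\alpha_0'$ is the only $\alpha$-curve meeting this region, so the complementary coordinates $\x:=\y\setminus\{x_n\}$ lie outside it and $\y\leftrightarrow(n,\x)$ is a bijection.

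Next I would set up and identify $\Phi$. Replacing the disc $D$ by the basepoints $z,w$ of Figure \ref{half_winding} turns $\H^{final}$ into a (multiply wound) doubly-pointed Heegaard diagram $\H^{knot}$ for $(S^3,K)$, whose generators with $n_z=0$ are exactly the $U^n\cdot(\{x_0\}\cup\x)$; so $\Phi(\{x_n\}\cup\x)=U^n\cdot(\{x_0\}\cup\x)$ is a grading-shifting bijection from the generators of $SFC_{A\ge N}(-S^3_{L^{stab}})$ onto those of $CFK^-_{A\ge N'}(S^3,K)$. The affine shift between the two Alexander gradings is the one already forced by the conventions --- namely $A_{CFK^-}=A_{SFC}+\tfrac{tb(L^{stab})+1}2$, as one reads off from the grading shift $\{(1-n)/2\}$ in the definition of $A_n$ (with $-n=tb(L^{stab})$), or equivalently from Theorem \ref{Heddenthm} together with Proposition \ref{EHgrading} --- which gives $N'=N+\tfrac{tb(L^{stab})+1}2$.

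Finally, and this is the main point, I would show that $\Phi$ intertwines the two differentials on the truncations. One has to prove that, between generators of sufficiently large Alexander grading, the holomorphic discs in $\H^{final}$ counted by $\de$ correspond bijectively to those in $\H^{knot}$ counted by the $U$-weighted differential $\de^-$, with the $U$-power equal to $n_w$. As in \cite{He}, the key is that the region of $\Sigma\setminus(\ba'\cup\bb)$ adjacent to $D$ is precisely the unbounded region that appeared in Lemmas \ref{positive_is} and \ref{xzero}, so any domain between high-Alexander-grading generators has multiplicity $0$ there; its restriction to the winding region is then either trivial or a concatenation of the elementary bigons between consecutive $x_n$'s, each of which moves $n$ by one and crosses a single basepoint, while away from the winding region the domains in the two diagrams literally coincide. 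For $N$ large the relevant generators never lie at the bottom of the spiral, so the low-index boundary degenerations that would otherwise obstruct the correspondence do not occur, and one obtains a genuine isomorphism $SFC_{A\ge N}(-S^3_{L^{stab}})\xrightarrow{\ \sim\ }CFK^-_{A\ge N'}(S^3,K)$ of chain complexes.

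The delicate step is this last one: verifying that no spurious holomorphic discs appear once $A\gg 0$ and that the powers of $U$ are recorded correctly. This rests, as in Hedden's argument, on the multiplicity-$0$ constraint at the region touching $D$ combined with the planarity of the handle; one should also note that here only \emph{half} of a winding region is present, so only one sense of spiralling occurs --- which is exactly what makes the truncated map land in $CFK^-$, i.e.\ involve only nonnegative powers of $U$, rather than in a $U$-inverted complex.
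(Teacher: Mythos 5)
Your proposal takes exactly the route the paper does: the paper's entire proof of this proposition is the assertion that it is an ``easy adaptation'' of Hedden's Theorem 4.1, and your three steps -- the grading argument forcing the $\alpha_0'$-coordinate of high-Alexander-degree generators into the (half) winding region, the generator bijection with the $\frac{tb(L^{stab})+1}{2}$ shift read off from the conventions, and the identification of differentials via the multiplicity-zero constraint at the region touching $D$ together with the winding-region analysis recording $U$-powers by $n_w$ -- are precisely that adaptation. This is correct and at least as detailed as the paper's own treatment, so there is nothing to add.
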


In particular, the generator $\y_0$ we've shown to represent $EH(L^{stab})$ is of the form $\{x_1\}\cup\overline\x$, where the generator $\{x_0\}\cup\overline\x\in CFK^-(S^3,K)$ represents $\L^-(L)$. It follows that under map induced at the chain level by $\Phi$ maps $EH(L^{stab})$ to $\L^-(L)$, therefore concluding the proof of Theorem \ref{EH=L-}.
\end{proof}

\end{document}